\title{Large Deviations and Exit-times for reflected McKean-Vlasov equations with self-stabilizing terms and superlinear drifts}
\author{
\normalsize Daniel Adams\textit{$^{a}$}\footnote{D.A was supported by The Maxwell Institute Graduate School in Analysis and its Applications, a Centre for Doctoral Training funded by the UK Engineering and Physical Sciences Research Council (grant EP/L016508/01), the Scottish Funding Council, Heriot-Watt University and the University of Edinburgh. } \\
        \small  d.t.s.adams@sms.ed.ac.uk 
\and
\normalsize Gon\c calo dos Reis\textit{$^{b,c,}$}\footnote{G.d.R. acknowledges support from the \emph{Funda{\c c}$\tilde{\text{a}}$o para a Ci$\hat{e}$ncia e a Tecnologia} (Portuguese Foundation for Science and Technology) through the project UIDB/00297/2020 (Centro de Matem\'atica e Aplica\c c$\tilde{\text{o}}$es CMA/FCT/UNL).} \\
        \small  G.dosReis@ed.ac.uk
\and
\normalsize Romain Ravaille\textit{$^{d,}$}\footnote{R.R. acknowledges support from the Labex MiLyon and from ANR-19-CE40-0009} \\
        \small  r.ravaille@univ-st-etienne.fr 
\and
\normalsize William Salkeld\textit{$^{b,}$}\footnote{W.S. is grateful to the Laura Wisewell Travel fund that facilitated this project. } \\
        \small  w.j.salkeld@sms.ed.ac.uk 
\and 
\normalsize Julian Tugaut\textit{$^{d,}$}\footnote{J.T. acknowledges support from the Labex MiLyon and from ANR-19-CE40-0009} \\
          \small  julian.tugaut@univ-st-etienne.fr
}
\theoremstyle{definition}
\newtheorem{theorem}{Theorem}[section]
\newtheorem{lemma}[theorem]{Lemma}
\newtheorem{corollary}[theorem]{Corollary}
\newtheorem{defn}[theorem]{Definition}
\newtheorem{example}[theorem]{Example}
\newtheorem{rem}[theorem]{Remark}
\newtheorem{prop}[theorem]{Proposition}
\newtheorem{assumption}[theorem]{Assumption}
\numberwithin{equation}{section}
\numberwithin{figure}{section}
\newcommand{\bB}{\mathbb{B}}
\newcommand{\bE}{\mathbb{E}}
\newcommand{\bP}{\mathbb{P}}
\newcommand{\bR}{\mathbb{R}}
\newcommand{\bN}{\mathbb{N}}
\newcommand{\bW}{\mathbb{W}}
\newcommand{\bZ}{\mathbb{Z}}
\newcommand{\cB}{\mathcal{B}}
\newcommand{\cD}{\mathcal{D}}
\newcommand{\cF}{\mathcal{F}}
\newcommand{\cG}{\mathcal{G}}
\newcommand{\cH}{\mathcal{H}}
\newcommand{\cK}{\mathcal{K}}
\newcommand{\cN}{\mathcal{N}}
\newcommand{\cP}{\mathcal{P}}
\newcommand{\cS}{\mathcal{S}}
\newcommand{\cT}{\mathcal{T}}
\newcommand{\fD}{\mathfrak{D}}
\newcommand{\1}{\mathbbm{1}}
\newcommand{\<}{\langle}
\renewcommand{\>}{\rangle}
\newcommand{\PP}{\mathbb{P}}
\newcommand{\EE}{\mathbb{E}}
\newcommand{\n}{\textbf{n}}
\date{%
    \footnotesize 
    $^{a}$~Maxwell Institute for Mathematical Sciences School of Mathematics University of Edinburgh Edinburgh UK EH9 3FD
    \\
    $^{b}$~School of Mathematics, University of Edinburgh, The King's Buildings, Edinburgh, UK
    \\
    $^{c}$~Centro de Matem\'atica e Aplica\c c$\tilde{\text{o}}$es (CMA), FCT, UNL, Portugal
    \\
    $^{d}$~Universit\'e Jean Monnet, Institut Camille Jordan, 23 Rue du Docteur Paul Michelon, 42023 Saint-\'Etienne, France
    \\[2ex]
    \longdate \today \ (\currenttime)
    \vspace{-0.8cm}
}
\begin{document}
\selectlanguage{english}

\maketitle
\begin{abstract}
We study reflected McKean-Vlasov diffusions over a convex, non-bounded domain with self-stabilizing coefficients that do not satisfy the classical Wasserstein Lipschitz condition. We establish existence and uniqueness results for this class and address the propagation of chaos. Our results are of wider interest: without the McKean-Vlasov component they extend reflected SDE theory, and without the reflective term they extend the McKean-Vlasov theory.

We prove a Freidlin-Wentzell type Large Deviations Principle and an Eyring-Kramer's law for the exit-time from subdomains contained in the interior of the reflecting domain. Our characterization of the rate function for the exit-time distribution is explicit.
\end{abstract}

\textbf{Keywords:} reflected McKean-Vlasov equations, Self-stabilizing diffusions, Super-linear growth, Freidlin-Wentzell Large Deviations Principle, Eyring-Kramer Law

\textbf{MSC 2010 subject classifications:} Primary 60F10; secondary 60G07

\section{Introduction}

In this article we study $\bR^d$-valued \textit{Stochastic Differential Equation}s (SDE)  whose dynamics are confined to a subset $\cD \subset \bR^d$, namely, the solution $X_t$ is repelled away from the boundary $\partial \cD$ by a reflection mechanism defined in terms of the outward normal and a local time at the boundary. These \textit{reflected SDEs}, enable one to model an impenetrable frontier at which the process is ``constrained'' and have advanced as a rich field within the applied probability theory. They are used to model physical transport processes \cite{costantini1991diffusion}, molecular dynamics \cite{saisho1994model}, biological systems \cites{dangerfield2012modeling,niu2016modelling} and appear in mathematical finance \cite{HanHuLee2016} and stochastic control  \cites{kruk2000optimal,ramasubramanian2006insurance}. Lastly, this reflection problem, the so-called \textit{Skorokhod problem} \cites{Skorokhod1961stochastic,Skorokhod1962stochastic}, has also proven particularly useful in analysing a variety of queuing and communication networks. The literature on the latter is vast, see \cites{ward2003diffusion,ramanan2003fluid} or \cites{chen2013fundamentals}.  

In this work, we focus on the general class of  \textit{reflected McKean-Vlasov equations}
\begin{equation}
\label{eq:MVE}
\begin{split}
X_t^{i} =& X_0 +\int_0^t b(s, X_s^{i}, \mu_s)ds + \int_0^t f\ast \mu_s(X_s^{i}) ds + \int_0^t \sigma(s, X_s^{i}, \mu_s) dW_s^{i} - k_t^{i}, 
\\
|k^i|_t=& \int_{0}^{t} \1_{ \partial\cD }(X_s^i) d|k^i|_s, 
\qquad 
k_t^i= \int_{0}^{t} \1_{ \partial\cD}(X_s^i)\n (X_s^i) d|k^i|_s,
\qquad
\mu_t(dx) = \bP\big[ X_t^i \in dx\big],
\end{split}
\end{equation}
where $\n$ is a vector field on the boundary of the domain $\cD$ in an outward normal direction, $W$ is a Brownian motion and $k$ is a bounded variation process with variation $|k|$ acting as a local time that constrains the process to the domain $\cD$. Thus, the instant the path attains the boundary $\partial \cD$ of the domain, $k$ increases creating a contribution that ensures the path remains inside the domain. $\mu$ is the law of the solution process $X$ and the coefficients $b$ and $f$ are locally Lipschitz over the domain $\cD$. We denote by $f\ast\mu(\cdot)$ the convolution of a function $f$ with the measure $\mu$. 

The law of the above diffusion solves the nonlinear Fokker-Planck equation with a Neumann boundary condition (see also \cite{wang2021distribution}), formally
\begin{equation}
\label{eq:FokkerPlanckEquation}
\begin{split}
& \partial_t \mu_t(x) = \nabla \cdot \Big( \tfrac{1}{2} \nabla^T \cdot (\sigma \cdot \sigma^T)(t, x, \mu_t) \mu_t(x) - b(s, x, \mu_t)\mu_t(x) - f \ast \mu_t(x) \mu_t(x) \Big)
\\
& \Big\langle \n(x),  \tfrac{1}{2} \nabla^T \cdot (\sigma\cdot \sigma^T) (t, x, \mu_t) \mu_t(x) - b(t, x, \mu_t) \mu_t(x) - f \ast \mu_t(x)\mu_t(x) \Big\rangle=0 \quad \forall x\in \partial \cD.
\end{split}
\end{equation}

It is widely known that McKean-Vlasov equations arise as the mean field limit of a system of interacting particles, the so-called \textit{Propagation of Chaos} (PoC): for $N\in \bN$ and $i\in\{1, ..., N\}$, the system of equations
\begin{equation}
\label{eq:ParticleSystem}
\begin{split}
X_t^{i, N} =& X_0 + \int_0^t b( s, X_s^{i, N}, \mu_s^N ) ds + \int_0^t f\ast \mu_s^N (X_s^{i, N}) ds + \int_0^t \sigma( s, X_s^{i, N}, \mu_s^N ) dW_s^{i, N} - k_t^{i, N}, 
\\
|k^{i, N}|_t=& \int_{0}^{t} \1_{ \partial\cD }(X_s^{i, N}) d|k^{i, N}|_s,
\qquad 
k_t^{i, N}= \int_{0}^{t} \1_{ \partial\cD}(X_s^{i, N}) \n (X_s^{i, N}) d|k^{i, N}|_s, 
\qquad
\mu_t^N = \tfrac{1}{N} \sum_{j=1}^N \delta_{X_t^{j, N}},
\end{split}
\end{equation}
has a dynamics that converges as $N\to \infty$ to that of Equation \eqref{eq:MVE},  

The problem of confining a stochastic process to a domain was first posed by Skorokhod in \cite{Skorokhod1961stochastic}. The seminal works \cite{tanaka2002stochastic}, \cite{lions1984stochastic} and \cite{saisho1987stochastic} prove that such solutions exist and are unique in the multi-dimensional case for different classes of domain.  \cite{tanaka2002stochastic} works with processes on a convex domain while \cite{saisho1987stochastic} studies domains that satisfy a ``Uniform Exterior Sphere'' and ``Uniform Interior Cone'' condition but imposes more restrictive assumptions on the equation's coefficients. \cite{sznitman1984nonlinear} was the first to prove wellposedness of reflected McKean-Vlasov equations in smooth bounded domains. The above works impose strong restrictions on the coefficients, usually requiring that they are Lipschitz and bounded. We prove the existence and uniqueness for a broader class of McKean-Vlasov reflected SDE in general convex domains, crucially not requiring global Lipschitz continuity, nor bounded coefficients, nor a bounded domain. We allow for superlinear growth components in both space and in the convolution component (the measure component).  
Very recently, \cite{wang2021distribution} contributes new wellposedness results under singular coefficients and establishes exponential ergodicity under a variety of conditions.

In this work we focus on reflections according to an outward normal of the solution's path as $X_t\in\partial\cD$, but other types of reflections exist. \textit{Oblique reflected SDEs} are reflected SDEs where the vector field $\n$ is not normal but oblique to the boundary. Wellposedness is studied in \cites{lions1984stochastic,anderson1976small} and in \cites{costantini1992skorohod,dupuis1993sdes} for non-smooth domains. \textit{Elastic reflections} appears in \cite{Spiliopoulos2007ReflectedAndLangevin}. 
A recently introduced form of reflections motivated by financial applications,  see \cite{briand2018bsdes}, is the  \textit{reflection in mean} where the reflection happens at the level of the distribution and is generally weaker than the classical pathwise constraint. A typical mean reflection constraint asks for the expected value (of a given function of the solution) to be non-negative, e.g.~$\bE[h(X_t)]\geq 0$.  See \cite{briand2016particles} for a particle system approximation of mean reflected SDE and its numerics. The particle system approximations are similar to the classical McKean-Vlasov setting. Lastly, a Large Deviation Principle for mean reflected SDE is achieved in \cite{li2018large} while the exit-time problem, in the likes of our study in Section \ref{sec:ExitTimes} below, is open. 

\subsubsection*{Large Deviations and Exit-times}

The second part of this work focuses in obtaining a \textit{Large Deviations Principle} and the characterisation of the exit-time from a subdomain $\fD\subsetneq \cD$ for the small noise limit for the reflected McKean-Vlasov equation
\begin{equation}
\label{eq:MVELimiting}
\begin{split}
X_t^\varepsilon 
&= X_0 + \int_0^t b(s, X_s^\varepsilon, \mu_s^\varepsilon )ds 
       + \int_0^t f \ast \mu_s^\varepsilon (X_s^\varepsilon ) ds 
       + \sqrt{\varepsilon}\int_0^t \sigma(s, X_s^\varepsilon, \mu_s^\varepsilon) dW_s - k_t^\varepsilon,
\\
|k^{\varepsilon}|_t
&= \int_{0}^{t} \1_{ \partial\cD }(X_s^{\varepsilon}) d|k^{\varepsilon}|_s, 
\qquad 
k_t^{\varepsilon}= \int_{0}^{t} \1_{ \partial\cD}(X_s^{\varepsilon}) \n (X_s^{\varepsilon}) d|k^{\varepsilon}|_s,
\qquad 
\mu_t^\varepsilon(dx) = \bP\big[ X_t^\varepsilon \in dx\big].
\end{split}
\end{equation}

The asymptotic theory of Large Deviations Principles (LDP) \cite{DZ} quantifies the rate of convergence for the probability of rare events. First developed by Schilder in \cite{schilder1966some}, an LDP is equivalent to convergence in probability with the addition that the rate of convergence is a specific speed controlled by the rate function. Consider a drift term $b$ that has some basin of attraction and assume the noise in our system is small. Under such conditions, it is common for the system to exhibit a meta-stable behaviour. Loosely speaking, this terminology refers to when a particle is forced towards a basin of attraction and spends long periods of time there before moving to the next basin of attraction. The particle only leaves after receiving a large "kick" from its noise which in the small noise limit, i.e., as the noise vanishes, is an increasingly rare event. This property of the dynamics poses a difficulty for numerical simulations since the numerical scheme takes an impractical amount of time to observe any deviations from the basin.  LDPs help by quantifying the probability of this rare event. 

A Freidlin–Wentzell LDP provides an estimate for the probability that the sample path of an It\^o diffusion will stray far from the mean path when the size of the driving Brownian motion is small with respect to a pathspace norm. Freidlin-Wentzell LDPs for reflected SDEs have been explored in a number of works. For bounded and Lipschitz coefficients, \cite{dupuis1987large} provides the LDP in general convex domains. For smooth domains, \cite{anderson1976small} obtains the LDP under the assumption of bounded and Lipschitz coefficients. Additional references on LDPs for reflected processes can be found in \cite{priouret1982remarques}.

Close to our work is \cite{liu2020large} where large and moderate deviations for non-reflected McKean-Vlasov equations with jumps is addressed via the Dupuis-Ellis weak convergence framework \cite{dupuis2011weak}. Their comprehensive wellposedness results \cite{liu2020large}*{Proposition 5.3} are established under a uniformly Lipschitz measure assumption on the coefficients (their assumption A1 and A2) while here we allow for fully super-linear growth in both measure and space components. 

LDPs are a suitable language for studying the rare event of exiting from a basin of attraction. For classical reflected SDEs the exit-time from a subdomain $\fD\subsetneq \cD$  is a trivial problem as one exits the subdomain $\fD$ before hitting the boundary of $\cD$, and hence,  the exit-time result for $\fD$ is recovered from standard SDE counterpart. This is a priori \textit{not} the case for reflected McKean-Vlasov equations where the reflection term affects the law and paths to ensure it remains on the domain and is thus different from the law of the non-reflected McKean-Vlasov. 

In the small noise limit the exit-problem for non-reflected SDEs is well documented. A great introduction to the subject can be found in \cite{DZ}*{Section 5.7}; for an in-depth study with slowly-varying time-dependent coefficients see \cite{Herrmann2013StochasticR}*{Section 4}; the excellent work \cite{HIP} characterises the exit-time of a McKean-Vlasov equation after obtaining a large deviation principle; see \cite{tugaut2016simple} for a simpler proof relying only on classical Freidlin-Wentzell estimates; and \cite{T2011f}, where the same results are obtained by transference from the particle system to the McKean-Vlasov system via propagation of chaos and Freidlin-Wentzell estimates.

\subsubsection*{Our motivation and contributions}

Our \textit{contributions} are threefold: (i) existence and uniqueness results for McKean-Vlasov SDEs constrained to a convex domain $\cD\subseteq \bR^d$ with coefficients that have superlinear growth in space and are non-Lipschitz in measure; (ii) a large deviations principle for this class of processes; and, (iii) the explicit characterisation of the first exit-time of the solution process from a subdomain $\fD\subsetneq \cD$.

For (i), unlike previous works on reflected SDEs, we do not rely on the domain as a way of ensuring the coefficients are bounded or Lipschitz. We work with drift terms that satisfy a one-sided Lipschitz condition over the (possibly unbounded) domain and are locally Lipschitz. Further, we do not restrict ourselves to measure dependencies that are Lipschitz on the domain, but additionally work with a drift term that satisfies a self-stabilizing assumption that ensures any particle is attracted towards the mean of the distribution/particle system. Critically, in a convex domain this will always be away from the boundary. 

From a technical point of view, the non-Lipschitz measure component, $f$ in \eqref{eq:MVE}, destroys the standard contraction argument. Nonetheless, we are able to establish an intermediate fixed point argument which decouples $f$, leaving $b$ to be dealt with. The main workaround result is Lemma \ref{lemma:Gamma-FirstContraction} in combination with a specific moment estimate mechanism. The closest result to ours is that of \cite{HIP}. There, specific structural assumptions are required: drift of specific polynomial form, $\sigma$ is constant, no-time dependencies, deterministic coefficients and, critically, $b$ and $f$ need to be combined into a mean-field interaction term of order $1$. We lift all these constraints. 

To the best of our knowledge, the scope of our well-posedness results for McKean-Vlasov equations, and separately for reflected SDEs, are not found in the literature. Thus, our contributions extend known results for McKean-Vlasov equations and reflected SDEs.

For (ii), our study of the LDPs is based on techniques which directly address the presence of the law in the coefficients and avoid the associated particle system. Methodologically, our approach relies on the classical mechanism of exponentially good approximations but employing judiciously chosen auxiliary processes and less standard tricks to obtain the main results. As in \cite{dos2019freidlin}, it turns out that the correct LDP rate function for McKean-Vlasov equations can be found through certain ODE equations (skeletons) where the McKean-Vlasov's noise and distributions are replaced by smooth functions and the degenerate distribution corresponding to the ODE's solution respectively. 

For (iii), the LDP results are the intermediate step necessary to study the exit-time of $X^\varepsilon$ from an open subdomain $\fD\subsetneq \cD$. Motivated by numerical applications, as in  \cites{di2017jump,di2019sharp}, we provide the \textit{explicit form of the rate function} for the exit-time distribution (the exit-cost $\Delta$ in Theorem \ref{thm:ExitTime}). 

Intuitively, the solution to \eqref{eq:MVELimiting} depends on its own law, hence one expects its exit-time from a subdomain to differ from the exit-time of its non-reflected analogue. Similarly, the exit-time of one of the particles in the system \eqref{eq:ParticleSystem} will be altered by the presence of the reflection since this particle will interact with other particles which have already been reflected. However, we will show that, in the small noise limit the exit-time of our McKean-Vlasov reflected SDE is unaltered and we are able to establish a familiar Eyring-Kramer's type law. 

The \textit{motivation} of our work stems from numerical considerations around the simulation of McKean-Vlasov equations (reflected or not) where the measure component is non-Lipschitz, in finite and infinite time horizon, and non-constant diffusion coefficients.  For instance, reflected McKean-Vlasov equations appear in \cite{LeiteWilliams2019} and \cite{Anderson2019} as models for bio-chemistry and our framework allows us to study the Granular media equation (see \eqref{eq:FokkerPlanckEquation})
$$
\partial_t \mu_t(x) = \tfrac{1}{2} \nabla^2 \mu_t(x) + \nabla\cdot \Big( \nabla B(x)\mu_t(x) + \nabla F \ast \mu_t(x) \mu_t(x) \Big),
$$
where $B$ is the constraining potential and $F$ is the interactive potential. This models the velocity distribution in the hydrodynamic limit of a collection of inelastic particles. In the case where the potentials $B$ and $F$ are convex, it is well known that the solution rapidly converges (as $t\to \infty$) towards an invariant distribution \cite{BGG1}. Our work opens a clear pathway to analyse the behaviour of \eqref{eq:MVE} and \eqref{eq:ParticleSystem} as $t\to \infty$.

An important and fully unanswered question left open by this work relates to effective numerical methods for this class of McKean-Vlasov equationss (even in the non-reflected case). On one hand the penalisation methodology of \cite{Slominski2013-rSDE-penalization} seem feasible, where the reflection on the bounded domain enforces boundedness of the solution process and the compact support of its law (a trick exploited in \cite{bouchard2017numerical}). On the other hand, explicit step Euler-type discretizations \cite{dos2018simulation} for super-linear drifts have been shown to work but only for drifts that are Lipschitz in the measure components. 

\medskip
\textit{This work is organised as follows.} Section \ref{sec:Preliminaries} introduces notation, setting and objects of interest. In Section \ref{section exist.unique} we address the wellposedness of the reflected McKean-Vlasov equations, of the associated reflected interacting particle system and present a Propagation of Chaos result. Sections \ref{sec:LDPs} and \ref{sec:ExitTimes} cover the Freidlin-Wentzell Large deviations and exit-time results respectively.

\section{Preliminaries}
\label{sec:Preliminaries}

We denote by $\bN=\{1,2,\cdots\}$ the set of natural numbers; $\bZ$ and $\bR$ denote the set of integers and real numbers respectively, with the real positive half-line set as $\bR^+=[0,\infty)$. For $t\in\bR$, we denote its floor as $\lfloor t \rfloor$ (the largest integer less than or equal to $t$). For any $x,y\in\bR^d$, $\langle x,y\rangle$ stands for the usual Euclidean inner product and $\|x\|=\langle x,x\rangle^{1/2}$ the usual Euclidean distance. Let $A$ be a $d\times d'$ matrix, we denote the transpose of $A$ by $A'$  and let $\| A \|$ be the Hilbert-Schmidt norm. Define the derivative of a function $f:\bR\to \bR^d$ as $\dot{f}$.

For sequences $(f_n)_{n\in \bN}$ and $(g_n)_{n\in\bN}$, we use the symbols $\lesssim ,\gtrsim $ in the following way: 
\begin{align*}
f_n \lesssim g_n \ \ \iff  \ \ \limsup_{n\to \infty} \frac{f_n}{g_n}\leq C,~\text{for some}~C>0,
\end{align*}
and
\begin{align*}
f_n \gtrsim g_n \ \ \iff  \ \ \liminf_{n\to \infty} \frac{f_n}{g_n}\geq C,~\text{for some}~C>0. 
\end{align*}
For a set $\cD \subset \bR^d$, we denote its interior (largest open subset) by $\cD^\circ$, its closure (smallest closed cover) by $\overline{\cD}$ and the boundary by $\partial \cD = \overline{\cD} \backslash \cD^{\circ}$. For $x\in \bR^d$,$r\geq 0$, denote $B_r(x)\subset \bR^d$ as the open ball of radius $r$ centred at $x$.

Let $f:\bR^d \to \bR$ be a differentiable function. Then we denote by $\nabla f$ the gradient operator and $\nabla^2 f$ to be the Hessian operator. Let $C([0,T]; \bR^d)$ be the space of continuous function $f:[0,T] \to \bR^d$ endowed with the supremum norm $\|\cdot\|_{\infty,[0,T]}$. For $x\in \mathbb{R}^d$ let $C_{x}([0,T]; \bR^d)$ be the subspace of $C([0,T]; \bR^d)$ of functions $f:[0,T] \to \bR^d$ with $f(0)=x$.

Let $\tilde{\Omega}=C_0([0,T]; \bR^{d'})$ be the canonical $d'$-dimensional Wiener space and let $W$ be the Wiener process with law $\tilde{\bP}$. Let $(\tilde{\cF_t})_{t\in[0,T]}$ be the standard augmentation of the filtration generated by the Brownian motion. Then we have the probability space $(\tilde{\Omega}, \tilde{\cF}, (\tilde{\cF_t})_{t\in[0,T]}, \tilde{\bP})$. Additionally, let $([0,1], \cB([0,1]), \overline{\bP})$ be a probability space with the Lebesgue measure $\overline{\bP}$. Our probability space is structured as follows:
\begin{enumerate}
\item The sample space will be $\Omega =[0,1] \times \tilde{\Omega}$,
\item The $\sigma$-algebra over this space will be $\cF = \sigma( \cB([0,1]) \times \tilde{\cF})$ with filtration $\cF_t = \sigma( \cB([0,1]) \times \tilde{\cF_t})$, 
\item The probability measure will be the product measure $\bP = \overline{\bP} \times \tilde{\bP}$. 
\end{enumerate}

For $p\geq 1$, let $L^p(\Omega, \cF, \bP; \cD)$ be the space of random variables over the probability space $(\Omega, \cF, \bP)$ with state space $\cD$ and finite $p$ moments.  For $p\geq 1$, let $\cS^p([0,T];\bR^d)$ be the space of  $(\tilde{\cF_t})_{t\in[0,T]}$-adapted processes $X:\Omega\times [0,T]\to \cD$ satisfying $\bE[ \|X\|^p_{\infty, [0,T]} ]^{1/p} < \infty$ where $\|X\|_{\infty,[0,T]}:=\sup_{s\in[0,T]} \| X_s \|$.

Let $\cH_1^0$ be the Cameron Martin Hilbert space for Brownian motion: the space of all absolutely continuous paths on the interval $[0, T]$ which start at $0$ and have a derivative almost everywhere which is $L^{2}([0, T]; \bR^{d'})$ integrable
$$
\cH_1^0:=\big\{h:[0, T] \to \bR^{d'},\ h(0)=0,\  h(\cdot)=\int_{0}^{\cdot} \dot{h}(s) d s,\ \dot{h} \in L^{2}([0, T]; \bR^{d'} )\big\}.
$$

Let $\cD$ (possibly unbounded) be a subset of $\bR^d$ and $\cB_\cD$ be the Borel $\sigma$-algebra over $\cD$. Let $\cP_r(\cD)$ be the set of all Borel probability measures which have finite $r^{th}$ moment. 
\begin{defn}
\label{defn:Wasserstein}
Let $r\geq 1$. Let $(\cD, d)$ be a metric space with Borel $\sigma$-algebra $\cB_\cD$. Let $\mu, \nu \in \cP_r(\cD)$. We define the Wasserstein $r$-distance $\bW_\cD^{(r)}: \cP_r(\cD) \times \cP_r(\cD) \to \bR^+$ to be

$$
\bW_\cD^{(r)} (\mu, \nu) = \Big( \inf_{\pi \in \Pi_r(\mu,\nu)} \int_{\cD \times \cD} d(x, y)^r \pi(dx, dy) \Big)^{\frac{1}{r}},
$$

where $\Pi_r(\mu,\nu)\subset \cP_r(\cD \times \cD)$ is the space of joint distributions over $\cD \times\cD$ with marginals $\mu$ and $\nu$. 
\end{defn}

\subsubsection*{Domain, outward normal vectors and properties}

The processes that we consider in this paper are confined to a domain $\cD$. 
\begin{defn}
Let $\cD$ be a subset of $\bR^d$ that has non-zero Lebesgue measure interior. For $x\in \partial \cD$, define
\begin{align*}
\cN_{x, r}:=& \{ \n\in \bR^d: \|\n\|=1, B_r(x+r\n)\cap \cD^{\circ} = \emptyset\}
\quad\textrm{and}\quad
\cN_x:= \cup_{r>0} \cN_{x, r}.
\end{align*}
We call the set $\cN_x$ the outward normal vectors. 
\end{defn}
For general domains, the set $\cN_x$ can be empty, for example if the boundary contains a concave corner. Furthermore if the boundary is not smooth at $x$ then it may be the case that $| \cN_{x,r}| = \infty$.  

\begin{defn}
Let $\cD\subset \bR^d$ with non-zero Lebesgue measure interior. We say that $\cD$ has a \emph{Uniform Exterior Sphere} if $\exists r_0>0$ such that $\forall x\in \partial \cD$, $\cN_{x, r_0} \neq \emptyset$. 
\end{defn}

The existence of a uniform exterior sphere ensures there is at least one outward normal vector at every point on the boundary. When this is not the case, there is no canonical choice for the reflective vector field. The following property of convex domains will be used extensively throughout this paper.

\begin{lemma}\label{lem:NormalToDomain}
Let $\cD\subset \bR^d$ be a convex domain with interior that has non-zero Lebesgue measure. Then $\cD$ has a Uniform Exterior Sphere, and for any $x\in \partial \cD$ and  $\n(x)\in \cN_{x}$ it holds that 
\begin{equation}\label{equation uniform exterior sphere property}
\langle \n(x), y-x\rangle \leq 0,~\forall y\in \cD.
\end{equation}

\begin{proof}
First we prove that $\cD$ has a Uniform Exterior Sphere. Let $r>0$ be fixed and let $x\in \partial \cD$. If $\cD$ is a convex subspace of $\bR^d$, then there exists a semi-plane $(\cS)$ which contains $\cD$. Thus we have a hyperplane $\cH_x$ that contains $x$ and $\cD^{\circ}\cap \cH_x=\emptyset$. Then, $\exists \n$ such that $\forall y\in \cH_x$ we have $\< y, \n\>=0$. Without loss of generality, $\n$ can be chosen to be an exiting vector from $\cD$. Consider the open ball $B_r (x + r\n)$. This is an open set contained in the complement of the closed semi-plane ($\cS^c$). Thus $B_r(x + r\n) \cap \cD^{\circ} = \emptyset$. Hence $\cN_{x, r}\neq \emptyset$. Now we show \eqref{equation uniform exterior sphere property}, For $x\in \partial \cD$, we have just shown that a vector $\n(x) \in \cN_{x}$ exists. Further, $\exists r>0$ such that $\n\in \cN_{x, r}$ and denote $z = x+r\n(x)$. Then
$$
\inf_{y\in \cD} \| z-y\| = \| z-x\|. 
$$
If this is not the case the ball of radius $r$ centred at $y$ would intersect with the $\cD^{\circ}$ and hence
\begin{align*}
\| (x-z) + (y-x)\| \geq& \| z-x\|
\quad \Rightarrow \quad \langle x-z, y-x\rangle \geq 0,
\end{align*}
rearranging this yields that \eqref{equation uniform exterior sphere property}.
\end{proof}
\end{lemma}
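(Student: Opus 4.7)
The plan is to treat the two claims separately using classical convex-geometric tools. For the first assertion (Uniform Exterior Sphere), I would invoke the \emph{supporting hyperplane theorem}: since $\cD$ is convex with non-empty interior, at every boundary point $x\in\partial\cD$ there exists a unit vector $\n\in\bR^d$ such that $\cD\subset\{y\in\bR^d:\langle y-x,\n\rangle\leq 0\}$. Fix any $r_0>0$; then $B_{r_0}(x+r_0\n)$ is contained in the open half-space $\{y:\langle y-x,\n\rangle>0\}$, which by the supporting hyperplane inclusion is disjoint from $\overline{\cD}$, and hence a fortiori from $\cD^\circ$. This shows $\n\in\cN_{x,r_0}$ for the same $r_0$ at every boundary point, giving the UES property.

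For the second assertion, I would fix $x\in\partial\cD$ and $\n(x)\in\cN_x$. By definition there exists $r>0$ with $B_r(z)\cap\cD^\circ=\emptyset$, where $z:=x+r\n(x)$. The aim is to deduce $\langle\n(x),y-x\rangle\leq 0$ for all $y\in\cD$ via a scaling argument. Take first $y\in\cD^\circ$; the standard convexity fact that if $x\in\overline{\cD}$ and $y\in\cD^\circ$ then $y_t:=x+t(y-x)\in\cD^\circ$ for every $t\in(0,1]$ gives $\|z-y_t\|\geq r$ for all such $t$. Squaring, expanding, and using $\|z-x\|=r$ yields
\begin{equation*}
r^2 - 2t\langle z-x,y-x\rangle + t^2\|y-x\|^2 \geq r^2.
\end{equation*}
Dividing by $t>0$ and passing to $t\to 0^+$ gives $\langle z-x,y-x\rangle\leq 0$, which is precisely $\langle\n(x),y-x\rangle\leq 0$ after dividing by $r$. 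For boundary points $y\in\cD\cap\partial\cD$, I would approximate by a sequence $y_n\in\cD^\circ$ with $y_n\to y$ (density of $\cD^\circ$ in $\overline{\cD}$ is a standard property of convex sets with non-empty interior) and conclude by continuity of the inner product.

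The only genuinely delicate step is the passage from $\|z-y\|\geq\|z-x\|$ to the sign condition on $\langle\n(x),y-x\rangle$: a naive expansion produces only the weaker inequality $2\langle\n(x),y-x\rangle\leq\|y-x\|^2/r$, which contains a quadratic remainder. The scaling trick along the segment $y_t$ is precisely what allows one to kill this error term by linearising at $t=0$, leveraging the fact that $\cD$ is convex \emph{globally}, not merely locally near $x$.
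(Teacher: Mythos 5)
Your proof follows the same overall strategy as the paper: the supporting hyperplane theorem for the Uniform Exterior Sphere, and the fact that $x$ is the closest point of $\cD$ to $z=x+r\n(x)$ for the inner-product inequality. The real difference is in how the closest-point inequality is converted into \eqref{equation uniform exterior sphere property}. The paper asserts the implication $\|y-z\|\geq \|z-x\|\ \Rightarrow\ \langle x-z,y-x\rangle\geq 0$ directly, but as you correctly observe this implication does not follow from a single expansion of the norm: it only yields $2\langle z-x,y-x\rangle\leq \|y-x\|^2$, which carries a quadratic remainder. Your scaling argument along $y_t=x+t(y-x)$, dividing by $t$ and letting $t\to 0^+$, is exactly the step needed to kill that remainder, and it exploits the global convexity of $\cD$ (so that the whole segment lies in $\cD^\circ$) rather than just the closest-point property at the single point $y$. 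So your write-up is not merely an alternative route; it actually supplies the linearization that the paper's argument implicitly requires but omits. The density/continuity passage from $\cD^\circ$ to $\overline{\cD^\circ}\supseteq\cD$ is standard and is another detail the paper leaves tacit.
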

 
Motivated by this Lemma, we will make the following assumption throughout this paper.
\begin{assumption}
\label{assumption:domain}
Let $\cD\subset \bR^d$ be a closed, convex set with non-zero Lebesgue measure interior.
\end{assumption}
For example, if $d=2$ a possible choice is $\cD=[0,\infty)^2$ or $\cD=[0,a]\times (-\infty,\infty)$ for some $a>0$, stressing the fact that we allow for unbounded domains with non-smooth boundaries. 

At this point it is worth mentioning that if the domain is non-convex, it may not satisfy such helpful conditions. For example both  \cite{saisho1987stochastic} and  \cite{lions1984stochastic} assume the uniform exterior sphere condition and cannot access Lemma \ref{lem:NormalToDomain}, whereas \cite{tanaka2002stochastic} relies on Lemma \ref{lem:NormalToDomain}. 

\subsubsection*{Reflective boundaries and the Skorokhod problem}
We are now in the position to formulate the Skorokhod problem which was first stated and studied in \cites{Skorokhod1961stochastic, Skorokhod1962stochastic}. 

A path $\gamma:[0,T] \to \bR^d$ is said to be c\`adl\`ag if it is right continuous and has left limits. 

\begin{defn}
\label{dfn:Skorokhodproblem}
Let $\gamma:[0,T] \to \bR^d$ be a c\`adl\`ag path and let $\cD$ be a subset of $\bR^d$. Suppose additionally that $\gamma_0\in \cD$. For each $x\in \partial \cD$, suppose that $\cN_x\neq \emptyset$. Let $\n:\partial \cD \to \bR^d$ such that $\n(x)\in \cN_{x}$. The triple $(\gamma, \cD, \n)$ denotes the \emph{Skorokhod problem}.  

We say that the pair $(\eta, k)$ is a solution to the Skorokhod problem $(\gamma, \cD, \n)$ if $\eta:[0,T] \to \overline{\cD}$ is a c\`adl\`ag path, $k:[0,T] \to \bR^d$ is a bounded variation path and
\begin{equation}
\label{eq:Skorokhodproblem}
 \eta_t=\gamma_t - k_t , \quad k_t = \int_0^t \n(\eta_s)\1_{\partial \cD}(\eta_s) d|k|_s, \quad |k|_t = \int_0^t \1_{\partial \cD} (\eta_s) d|k|_s,
\end{equation}
where $\n(x)\in \cN_{x}$ when $x \in \partial \cD$ and $\n(x)=0$ otherwise. 
\end{defn}

This problem was first studied in the deterministic setting in \cite{Chaleyat1980Reflexion} and in the stochastic setting in \cite{tanaka2002stochastic}. For general domains, one may be unable to show uniqueness, or even existence of a solution to the Skorokhod problem. We emphasise that this will not be an issue that we explore in this paper.
\begin{theorem}[\cite{tanaka2002stochastic}*{Theorem 3.1}]
\label{thm:SkorokhodProblem}
Let $\cD$ satisfy Assumption \ref{assumption:domain}. Let $(\Omega, \cF, (\cF_t)_{t\in [0,T]}, \bP)$ be a filtered probability space. Let $\gamma=(\gamma_t)_{t\in[0,T]}$ be an $\cF_t$-adapted $\bR^d$-valued semimartingale with $\gamma_0\in \cD$. 

Then there exists a unique solution to the Skorokhod problem $(\gamma, \cD, \n)$ $\bP$-a.s. 
\end{theorem}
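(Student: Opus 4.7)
The plan is to treat the problem pathwise: construct a deterministic Skorokhod map $\Gamma: C([0,T];\bR^d) \to C([0,T];\overline{\cD}) \times BV([0,T];\bR^d)$ sending $\gamma \mapsto (\eta, k)$ which is continuous in the supremum norm, and then apply it $\omega$-by-$\omega$ to the sample paths of the semimartingale. Convexity of $\cD$ together with Lemma \ref{lem:NormalToDomain} will deliver both uniqueness and the stability estimate needed for continuity of $\Gamma$.

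\textbf{Uniqueness.} Suppose $(\eta^1, k^1)$ and $(\eta^2, k^2)$ both solve $(\gamma, \cD, \n)$. Since $\eta^1 - \eta^2 = k^2 - k^1$ is of bounded variation, Stieltjes integration by parts applied to $\|\eta^1_t - \eta^2_t\|^2$ gives
\begin{align*}
\tfrac{1}{2}\|\eta^1_t - \eta^2_t\|^2
= -\int_0^t \<\eta^1_s - \eta^2_s, \n(\eta^1_s)\> \1_{\partial \cD}(\eta^1_s) d|k^1|_s
  + \int_0^t \<\eta^1_s - \eta^2_s, \n(\eta^2_s)\> \1_{\partial \cD}(\eta^2_s) d|k^2|_s.
\end{align*}
Lemma \ref{lem:NormalToDomain} applied with $x = \eta^1_s \in \partial\cD$ and $y = \eta^2_s \in \cD$ shows that $\<\n(\eta^1_s), \eta^2_s - \eta^1_s\> \leq 0$, making the first integrand non-positive; a symmetric argument handles the second. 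Hence $\eta^1 \equiv \eta^2$ and consequently $k^1 \equiv k^2$.

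\textbf{Existence.} I would build $(\eta,k)$ via a discretization/projection scheme. On the dyadic mesh $t_k^n = kT/2^n$, set $\eta^n_0 := \gamma_0 \in \cD$, and inductively
\[
\eta^n_{t_{k+1}^n} := \Pi_\cD\bigl(\eta^n_{t_k^n} + \gamma_{t_{k+1}^n} - \gamma_{t_k^n}\bigr),
\qquad
k^n_{t_{k+1}^n} - k^n_{t_k^n} := \bigl(\eta^n_{t_k^n} + \gamma_{t_{k+1}^n} - \gamma_{t_k^n}\bigr) - \eta^n_{t_{k+1}^n},
\]
where $\Pi_\cD$ is the Euclidean projection onto the closed convex set $\cD$ (which is 1-Lipschitz). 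After piecewise-linear interpolation, the 1-Lipschitz property of $\Pi_\cD$ combined with uniform continuity of $\gamma$ on $[0,T]$ yields that $(\eta^n)$ is Cauchy in $C([0,T];\bR^d)$ and that the total variation of $k^n$ is uniformly bounded. Passing to the limit produces $(\eta,k)$ satisfying $\eta = \gamma - k$. The support identity $|k|_t = \int_0^t \1_{\partial \cD}(\eta_s) d|k|_s$ holds in the limit because non-zero increments of $k^n$ only occur when the projection activates, forcing the projected point onto $\partial \cD$; the directional identity $k_t = \int_0^t \n(\eta_s) \1_{\partial \cD}(\eta_s) d|k|_s$ follows because by convexity each projection increment lies in the outward normal cone $\cN_{\Pi_\cD(z)}$.

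\textbf{Passing to the semimartingale.} The uniqueness estimate upgrades, via the triangle inequality, to a stability bound showing that $\Gamma$ is continuous from $C([0,T];\bR^d)$ to $C([0,T];\overline{\cD})$ in the sup norm. Applying $\Gamma$ pathwise to the continuous semimartingale $\gamma$ therefore produces an $\cF_t$-adapted pair $(\eta,k)$. The main obstacle I anticipate is the \emph{c\`adl\`ag} extension: at a jump time $\tau$ of $\gamma$ one must define $\eta_\tau := \Pi_\cD(\eta_{\tau-} + \Delta \gamma_\tau)$ and $\Delta k_\tau := \Delta \gamma_\tau - \Delta\eta_\tau$ (automatically in the outward normal cone by the projection theorem), then interleave these jump projections with the continuous Skorokhod construction between successive jump times. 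Verifying that the resulting $k$ remains of finite variation despite potentially infinitely many small jumps, and that adaptedness survives the interleaving, is the delicate technical point.
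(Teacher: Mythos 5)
The paper does not prove this statement --- Theorem \ref{thm:SkorokhodProblem} is lifted verbatim from Tanaka's 1979 paper, so there is no in-paper proof to compare against. Your uniqueness argument for continuous $\gamma$ is exactly the standard one (and correct): since $\eta^1 - \eta^2 = k^2 - k^1$ is of bounded variation, integration by parts for $\|\eta^1_t - \eta^2_t\|^2$ together with Lemma \ref{lem:NormalToDomain} gives the contraction.

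The existence argument, however, has a genuine gap at its centre. You assert that the 1-Lipschitz property of $\Pi_\cD$ plus uniform continuity of $\gamma$ yields both Cauchyness of $\eta^n$ and a \emph{uniform bound on the total variation of $k^n$}. Let us examine what the naive estimate actually gives. Each projection step satisfies
$$
\bigl\|k^n_{t_{k+1}^n} - k^n_{t_k^n}\bigr\| = {\rm dist}\bigl(\eta^n_{t_k^n} + \gamma_{t_{k+1}^n} - \gamma_{t_k^n},\, \cD\bigr) \leq \bigl\|\gamma_{t_{k+1}^n} - \gamma_{t_k^n}\bigr\|,
$$
so the total variation of $k^n$ over $[0,T]$ is at most $\sum_k \|\gamma_{t_{k+1}^n} - \gamma_{t_k^n}\|$. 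For the semimartingales the theorem is actually applied to --- notably Brownian paths --- this right-hand side \emph{diverges} as the mesh refines, precisely because those paths have infinite first variation. So uniform continuity of $\gamma$ does not control the variation of $k^n$; in fact the obvious bound blows up, and proving that ${\rm Var}(k)<\infty$ is the non-trivial heart of the existence proof. Tanaka obtains it by a much sharper argument that uses convexity globally (repeated application of the outward-normal inequality across the whole excursion, not just step-by-step) to bound the increment of $|k|$ over an interval by a quantity involving only the modulus of continuity of $\gamma$, uniformly in the mesh. Without that estimate, the discretization scheme you propose is not known to converge.

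A secondary point: the claim that ``the uniqueness estimate upgrades, via the triangle inequality, to a stability bound showing $\Gamma$ is continuous'' is also too glib. Comparing solutions for two different inputs $\gamma^1, \gamma^2$ introduces a cross term $\int_0^t \langle \eta^1_s - \eta^2_s, d(\gamma^1_s - \gamma^2_s)\rangle$ which, after integration by parts, brings in exactly the total variation of $k^1, k^2$. So the continuity of the Skorokhod map is not independent of the variation bound --- it is downstream of it. (You correctly flag the c\`adl\`ag extension as delicate, but note that jump discontinuities would also break your uniqueness computation: the It\^o/Meyer formula for $\|\eta^1_t-\eta^2_t\|^2$ acquires a term $\sum_s \|\Delta(\eta^1 - \eta^2)_s\|^2$ with the wrong sign, and the integrands become $\langle \eta^1_{s-}-\eta^2_{s-}, \n(\eta^i_s)\rangle$ with left limits that are no longer obviously in $\cD$ relative to $\eta^i_s \in \partial\cD$.)
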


\section{Existence, uniqueness and propagation of chaos}
\label{section exist.unique}

In this section, we prove that under appropriate assumptions there exists a unique solution to the Stochastic Differential Equations \eqref{eq:MVE}. In the subsequent step, we address the \textit{Propagation of Chaos} result regarding convergence of the solution of the particle system \eqref{eq:ParticleSystem} to the solution of the McKean-Vlasov \eqref{eq:MVE}.  

In Section \ref{subsec:ExistUniq-RSDEs} we prove \textit{existence and uniqueness for a broad class of classical reflected SDEs} where the coefficients are assumed random, time-dependent and satisfying a superlinear growth condition. Crucially, we do not restrict ourselves to a bounded domain. In Section \ref{subsec:ExistUniq-RMVEs}  we prove \textit{existence and uniqueness for reflected McKean-Vlasov SDEs} satisfying a $\bW^{(2)}$-Lipschitz condition in the measure component. This is \textit{generalised in Theorem \ref{thm:ExistUnique-LocLip-SSMVE} to coefficients that are locally Lipschitz in measure}, although in this final step we necessarily restrict to deterministic coefficients; the proof of the result is provided in Section \ref{subsec:ExistUniq-SSMVEs}. 

Lastly, in Section \ref{sec:PoC}, we prove that the limit of a single equation within the system of interacting equations \eqref{eq:ParticleSystem} converges to the dynamics of Equation \eqref{eq:MVE}, i.e.~\textit{Propagation of Chaos (PoC)}. 

\subsection{Existence and uniqueness for reflected SDEs}
\label{subsec:ExistUniq-RSDEs}

Let $t\geq 0$. We commence by studying classical reflected SDEs of the form 
\begin{equation}
\label{eq:reflectedSDE}
\begin{split}
X_t =& \theta + \int_0^t b(s, X_s) ds + \int_0^t \sigma(s, X_s) dW_s - k_t,
\\
|k|_t=&\int_0^t \1_{\partial \cD}(X_s) d|k|_s, \qquad k_t = \int_0^t \1_{\partial \cD}(X_s) \n(X_s) d|k|_s.
\end{split}
\end{equation}
This first result is a generalisation of Tanaka's classical results in \cite{tanaka2002stochastic} extended to the case where the drift and diffusion terms are random and time dependent, and the drift term satisfies a one-sided Lipschitz condition. 

\begin{theorem}
\label{thm:ExistUnique-LocLip-Ref}
Let $\cD$ satisfy Assumption \ref{assumption:domain}. Let $p\geq 2$. Let $W$ be a $d'$ dimensional Brownian motion. Let $\theta:\Omega \to \cD$, $b:[0,T] \times \Omega \times \cD \to \bR^d$ and $\sigma:[0,T] \times \Omega \times \cD \to \bR^{d\times d'}$ be progressively measurable maps. Suppose that 
\begin{itemize}
\item $\theta \in L^p( \cF_0, \bP; \cD)$.
\item $\exists x_0\in \cD$ such that $b$ and $\sigma$ satisfy the integrability conditions
$$
\bE\Big[ \Big( \int_0^T \| b(s, x_0) \| ds \Big)^p \Big]
\vee
\bE\Big[ \Big( \int_0^T \| \sigma(s, x_0) \|^2 ds \Big)^{p/2} \Big] < \infty. 
$$
\item $\exists L>0$ such that for almost all $(s, \omega)\in [0,T] \times \Omega$ and $\forall x, y\in \cD$, 
$$
\big\langle b(s, x) - b(s, y), x-y \big\rangle
\leq  L \| x-y \|^2
\quad\textrm{and}\quad  
\| \sigma(s, x) - \sigma(s, y) \| \leq L \| x-y \|, 
$$
\item $\forall n\in \bN$, $\exists L_n>$ such that $\forall x,y\in \cD_n = \cD \cap \overline{B_n(x_0)}$, 
$$
\| b(s, x) - b(s, y) \| \leq L_n \| x-y \| 
\quad \textrm{for almost all $(s, \omega) \in [0,T] \times \Omega$. }
$$
\end{itemize}
Then there exists a unique solution to the reflected Stochastic Differential Equation \eqref{eq:reflectedSDE} in $\cS^p([0,T])$ and
$$
\bE\Big[ \| X - x_0\|_{\infty, [0,T]}^p \Big] \lesssim \bE\Big[ \| \theta - x_0 \|^p \Big] + \bE\Big[ \Big( \int_0^T \| b(s, x_0) \| ds \Big)^p \Big] + \bE\Big[ \Big( \int_0^T \|  \sigma(s, x_0) \|^2 ds \Big)^{p/2} \Big]. 
$$
\end{theorem}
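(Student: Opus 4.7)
The plan is to construct the solution by localizing the drift, applying existing theory on the truncated equations, and then passing to the limit using an $L^p$ moment bound obtained from It\^o's formula combined with Lemma \ref{lem:NormalToDomain}. Let $\pi_n : \bR^d \to \cD_n$ denote the projection onto the closed convex set $\cD_n = \cD \cap \overline{B_n(x_0)}$, which is $1$-Lipschitz. Set $b_n(s,\omega, x) := b(s,\omega, \pi_n(x))$. Since $b$ is Lipschitz with constant $L_n$ on $\cD_n$, the map $b_n(s,\omega,\cdot)$ is globally Lipschitz on $\bR^d$ with constant $L_n$. The equation \eqref{eq:reflectedSDE} with drift $b_n$ and diffusion $\sigma$ then falls under a Picard iteration scheme: at each step Theorem \ref{thm:SkorokhodProblem} furnishes a solution of the Skorokhod problem, and the contraction property of the Skorokhod map on convex domains combined with the global Lipschitz property of $b_n$ and $\sigma$ produces a unique $X^n \in \cS^p([0,T])$ with reflection $k^n$.

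Next, define stopping times $\tau_n := \inf\{t \geq 0 : \|X^n_t - x_0\| \geq n\} \wedge T$. On $[0,\tau_n]$ the path $X^n$ lies in $\cD_n$, where $b_n \equiv b$, so one may exploit the one-sided Lipschitz structure of $b$ itself. Applying It\^o's formula to a $C^2$ approximation of $x \mapsto \|x-x_0\|^p$, the reflection contribution
\begin{equation*}
-p \int_0^{t \wedge \tau_n} \|X^n_s - x_0\|^{p-2}\langle X^n_s - x_0, \n(X^n_s)\rangle \1_{\partial\cD}(X^n_s)\,d|k^n|_s
\end{equation*}
is non-positive by Lemma \ref{lem:NormalToDomain} applied with $y = x_0 \in \cD$, and so may be discarded. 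Splitting $b(s,X^n_s) = (b(s,X^n_s) - b(s,x_0)) + b(s,x_0)$, the one-sided Lipschitz condition bounds the former by $L\|X^n_s - x_0\|^p$ inside the integral, and Young's inequality controls the $b(s,x_0)$ contribution by $\tfrac12 \sup_{s\leq t}\|X^n_s - x_0\|^p + C\bigl(\int_0^T \|b(s,x_0)\| ds\bigr)^p$ after integration in time. The stochastic integral and It\^o correction are handled by Burkholder-Davis-Gundy together with the global Lipschitz property of $\sigma$, yielding after a Gronwall step the claimed $L^p$ bound for $X^n$ stopped at $\tau_n$, with constants independent of $n$.

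Finally, Markov's inequality with this uniform bound gives $\bP(\tau_n < T) \lesssim n^{-p}$, so $\tau_n \nearrow T$ almost surely. Uniqueness within the truncated equations, proved by applying It\^o to $\|X^n - \widetilde{X}^n\|^2$ for two candidate solutions -- where both reflection terms are non-positive by Lemma \ref{lem:NormalToDomain} and the one-sided Lipschitz bound closes a Gronwall inequality -- guarantees $X^n \equiv X^m$ on $[0, \tau_n \wedge \tau_m]$. The processes therefore patch consistently to a limit $X$ solving \eqref{eq:reflectedSDE} with reflection $k = \lim_n k^n$, and Fatou's lemma transfers the moment bound to $X$. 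The main technical obstacle is the $L^p$ moment estimate itself, since we must control superlinear growth of $b$ without any a priori integrability on $b(s,X^n_s)$; this is precisely where Lemma \ref{lem:NormalToDomain} is essential, as it eliminates the reflection term that would otherwise be uncontrollable, while the one-sided Lipschitz condition confines all drift growth either into a Gronwall factor or into the integrability assumption on $b(s,x_0)$.
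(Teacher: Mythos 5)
Your proof follows the same strategy as the paper's Appendix~\ref{appendixB}: truncate the drift by composing with the projection onto the convex set $\cD_n$ (your $\pi_n$ is exactly the paper's $\operatorname{arg\,min}_{y\in\cD_n}\|x-y\|$), invoke the globally Lipschitz existence result (the paper's Theorem~\ref{thm:ExistUnique-Lip-Ref}, itself proved by running Picard iterates through the Skorokhod map), then use the stopping times $\tau_n$, the one-sided Lipschitz condition, Lemma~\ref{lem:NormalToDomain} to discard the reflection term, BDG and Gr\"onwall to obtain an $L^p$ bound uniform in $n$, and finally conclude by Markov/Borel--Cantelli. The only terminological slip is calling the Skorokhod map ``contractive'' on convex domains — it is only Lipschitz, and indeed the paper's proof of the Lipschitz case does not use that Lipschitz property but rather the same Lemma~\ref{lem:NormalToDomain} trick you use elsewhere — but this does not affect the substance of the argument.
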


The proof is given in Appendix \ref{appendixB}.

\subsection{Existence and uniqueness for McKean-Vlasov equations}
\label{subsec:ExistUniq-RMVEs}

Next, for $t \geq 0$, we study reflected McKean-Vlasov equations, i.e.~stochastic processes of the form
\begin{equation}
\label{eq:reflectedMVE}
\begin{split}
X_t =& \theta + \int_0^t b(s, X_s, \mu_s) ds + \int_0^t \sigma(s, X_s, \mu_s) dW_s - k_t, \quad \bP\big[ X_t \in dx\big] = \mu_t(dx), 
\\
|k|_t=& \int_0^t \1_{\partial D}(X_s) d|k|_s, \qquad k_t = \int_0^t \1_{\partial \cD}(X_s) \n(X_s) d|k|_s. 
\end{split}
\end{equation}

\begin{theorem}
\label{thm:ExistUnique-LocLip-MVE}
Let $\cD$ satisfy Assumption \ref{assumption:domain}. Let $p\geq 2$. Let $W$ be a $d'$ dimensional Brownian motion. Let $\theta:\Omega \to \cD$, $b:[0,T] \times \Omega \times \cD \times \cP_2(\cD) \to \bR^d$ and $\sigma:[0,T] \times \Omega \times \cD \times \cP_2(\cD) \to \bR^{d\times d'}$ be progressively measurable maps. Assume that 
\begin{itemize}
\item $\theta \in L^p( \cF_0, \bP; \cD)$ and $\theta \sim \mu_\theta$. 
\item $\exists x_0\in \cD$ such that $b$ and $\sigma$ satisfy the integrability conditions
$$
\bE\Big[ \Big( \int_0^T \| b(s, x_0, \delta_{x_0}) \| ds \Big)^p \Big]
\vee
\bE\Big[ \Big( \int_0^T \| \sigma(s, x_0, \delta_{x_0}) \|^2 ds \Big)^{p/2} \Big] < \infty. 
$$
\item $\exists L>0$ such that for almost all $(s, \omega)\in [0,T] \times \Omega$, $\forall \mu, \nu \in \cP_2(\cD)$ and $\forall x, y\in \bR^d$, 
\begin{align*}
\Big\langle b(s, x, \mu) - b(s, y, \mu), x-y \Big\rangle \leq L\| x-y \|^2, 
\quad
\| \sigma(s, x, \mu) - \sigma(s, y, \mu) \| \leq L \| x-y \|, 
\\
\| b(s, x, \mu) - b(s, x, \nu) \| \leq L \bW^{(2)}_\cD (\mu, \nu), 
\quad
\| \sigma(s, x, \mu) - \sigma(s, x, \nu) \| \leq L \bW^{(2)}_\cD (\mu, \nu). 
\end{align*}
\item $\forall n\in \bN$, $\exists L_n>$ such that $\forall x,y\in \cD \cap \overline{B_n(x_0)}$, 
$$
\| b(s, x, \mu) - b(s, y, \mu) \| \leq L_n \| x-y \| 
\quad \textrm{for almost all $(s, \omega) \in [0,T] \times \Omega$. }
$$

\end{itemize}
Then there exists a unique solution to the reflected McKean-Vlasov equation \eqref{eq:reflectedMVE} in $\cS^p([0,T])$ and
$$
\bE\Big[ \| X - x_0\|_{\infty, [0,T]}^p \Big] \lesssim \bE\Big[ \| \theta - x_0 \|^p\Big] + \bE\Big[ \Big( \int_0^T \| b(s, x_0, \delta_{x_0}) \| ds \Big)^p \Big] + \bE\Big[ \Big( \int_0^T \| \sigma(s, x_0, \delta_{x_0}) \|^2 ds \Big)^{p/2} \Big]. 
$$
\end{theorem}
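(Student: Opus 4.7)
The plan is to resolve \eqref{eq:reflectedMVE} via a Picard--Banach fixed-point argument on the flow of marginal laws, using Theorem \ref{thm:ExistUnique-LocLip-Ref} to handle the measure-frozen reflected SDE at each iteration. For any $\nu\in\cC([0,T];\cP_2(\cD))$ with sufficiently controlled $p$-moments, the coefficients $\tilde b(s,\omega,x):=b(s,\omega,x,\nu_s)$ and $\tilde\sigma(s,\omega,x):=\sigma(s,\omega,x,\nu_s)$ inherit the one-sided and local Lipschitz bounds in $x$, while the integrability at $x_0$ required in Theorem \ref{thm:ExistUnique-LocLip-Ref} is obtained from the Wasserstein-Lipschitz bound $\|b(s,x_0,\nu_s)\|\leq \|b(s,x_0,\delta_{x_0})\|+L\bW^{(2)}_\cD(\nu_s,\delta_{x_0})$ (and analogously for $\sigma$) together with the moment assumption on $\nu$. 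Theorem \ref{thm:ExistUnique-LocLip-Ref} thus produces a unique $X^\nu\in\cS^p([0,T])$, defining the Picard map $\Phi:\nu\mapsto(\cL(X^\nu_t))_{t\in[0,T]}$.

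The contraction estimate is the heart of the argument. Given $\nu,\tilde\nu$, couple $X=X^\nu$ and $\tilde X=X^{\tilde\nu}$ using the same Brownian motion and the same initial condition $\theta$, with local times $k,\tilde k$. It\^o's formula applied to $\|X_t-\tilde X_t\|^2$ generates the reflection cross term
\[
-2\int_0^t\langle X_s-\tilde X_s,\,dk_s-d\tilde k_s\rangle,
\]
which is non-positive by Lemma \ref{lem:NormalToDomain}: whenever $X_s\in\partial\cD$ the convexity of $\cD$ gives $\langle \n(X_s),\tilde X_s-X_s\rangle\leq 0$, so $\langle X_s-\tilde X_s,dk_s\rangle\geq 0$, and symmetrically for $d\tilde k_s$. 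Combining this favourable sign with the one-sided Lipschitz bound on $b(s,\cdot,\mu)$, the Lipschitz bound on $\sigma(s,\cdot,\mu)$, the Wasserstein-Lipschitz bounds in the measure entry, and It\^o's isometry, one obtains
\[
\bE\big[\|X_t-\tilde X_t\|^2\big]\leq C\int_0^t \bE\big[\|X_s-\tilde X_s\|^2\big]ds + C\int_0^t \bW^{(2)}_\cD(\nu_s,\tilde\nu_s)^2\,ds.
\]
Gr\"onwall's inequality together with $\bW^{(2)}_\cD(\Phi(\nu)_t,\Phi(\tilde\nu)_t)^2\leq \bE[\|X_t-\tilde X_t\|^2]$ yields $\bW^{(2)}_\cD(\Phi(\nu)_t,\Phi(\tilde\nu)_t)^2\leq C'\int_0^t\bW^{(2)}_\cD(\nu_s,\tilde\nu_s)^2\,ds$. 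Iterating this estimate $k$ times produces $\sup_{t\leq T}\bW^{(2)}_\cD(\Phi^k(\nu)_t,\Phi^k(\tilde\nu)_t)^2\leq \frac{(C'T)^k}{k!}\sup_{t\leq T}\bW^{(2)}_\cD(\nu_t,\tilde\nu_t)^2$, so a sufficiently high iterate of $\Phi$ is a strict contraction on the ambient complete metric space.

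Banach's fixed-point theorem then produces the unique $\mu$ with $\Phi(\mu)=\mu$, and $X^\mu\in\cS^p([0,T])$ is the desired unique solution of \eqref{eq:reflectedMVE}. The displayed $\cS^p$-moment estimate is read off from Theorem \ref{thm:ExistUnique-LocLip-Ref} applied to the coefficients frozen at $\mu$, after bounding $\|b(s,x_0,\mu_s)\|$ and $\|\sigma(s,x_0,\mu_s)\|$ via the Wasserstein-Lipschitz property and the just-established $\cS^p$-bound on $X^\mu$. The main technical care is the propagation of $L^p$-moments across iterates, ensuring that each $\nu^{(k)}=\Phi^k(\nu^{(0)})$ remains in a set to which Theorem \ref{thm:ExistUnique-LocLip-Ref} may be applied; this is handled either by a Gr\"onwall-type induction on the moment output of Theorem \ref{thm:ExistUnique-LocLip-Ref} or, for large horizons, by first running the argument on a short interval $[0,T_1]$ and gluing. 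The reflection itself is not an obstacle, because the convexity of $\cD$ (via Lemma \ref{lem:NormalToDomain}) makes the local-time cross term work in our favour, exactly as for classical reflected SDEs.
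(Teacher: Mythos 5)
Your proposal is correct and follows essentially the same strategy as the paper: freeze the measure, solve the reflected SDE via Theorem \ref{thm:ExistUnique-LocLip-Ref}, use the convexity of $\cD$ (Lemma \ref{lem:NormalToDomain}) to kill the local-time cross term in the It\^o expansion of $\|X_t-\tilde X_t\|^2$, and iterate the resulting Gr\"onwall estimate to obtain a high-order Picard contraction in Wasserstein distance. The only cosmetic difference is that the paper sets up the fixed point on $\cP_2(C([0,T];\cD))$ with the pathwise Wasserstein metric (using a Burkholder--Davis--Gundy bound on $\sup_t\|X^{(\nu^1)}_t-X^{(\nu^2)}_t\|$), whereas you work on the flow of marginal laws $C([0,T];\cP_2(\cD))$ and only need a pointwise-in-time second-moment estimate; both lead to the same $\frac{(C'T)^k}{k!}$ contraction factor and the same conclusion.
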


\begin{proof}
Throughout this proof, we distinguish between measures $\nu \in \cP_2 \big( C([0,T]; \cD) \big)$ and their pushforward measure with respect to path evaluation $\nu_t \in \cP_2(\cD)$. 

Then for $\nu^1, \nu^2 \in \cP_2\big( C([0,T]; \cD)\big)$, we have
\begin{align}
\label{eq:thm:ExistUnique-LocLip-MVE2.1}
\sup_{t\in [0,T]} \bW_{\cD}^{(2)} \Big(\nu_t^1, \nu_t^2 \Big) \leq \bW_{C([0,T]; \cD)}^{(2)} \Big( \nu^1, \nu^2 \Big). 
\end{align}
For $\nu \in \cP_2( C([0,T]; \cD))$, we define the reflected Stochastic Differential Equation
\begin{equation}
\label{eq:thm:ExistUnique-LocLip-MVE1.1}
\begin{split}
X_t^{(\nu)} =&\theta + \int_0^t b(s, X_s^{(\nu)}, \nu_s) ds + \int_0^t \sigma(s, X_s^{(\nu)}, \nu_s) dW_s - k_t^{(\nu)}, 
\\
|k^{(\nu)}|_t=& \int_0^t \1_{\partial D}(X_s^{(\nu)}) d|k^{(\nu)}|_s, 
\quad
k^{(\nu)}_t = \int_0^t \1_{\partial \cD}(X_s^{(\nu)}) \n(X_s^{(\nu)}) d|k^{(\nu)}|_s. 
\end{split}
\end{equation}

Let $x_0\in \cD$. For $\mu_0 \in \cP_2(\cD)$, let $\mu_0' \in \cP_2\big( C([0,T]; \cD)\big)$ be the law of the constant path with initial distribution $\mu_0$. Using the Lipschitz condition for the measure dependency of $b$ and $\sigma$, we have
\begin{align*}
\bE\Big[ \Big( \int_0^T \| b(s, x_0, \nu_s) \| ds \Big)^p \Big] 
\leq& \bE\Big[ \Big( \int_0^T \| b(s, x_0, \mu_0) \| ds + L\int_0^T \bW_\cD^{(2)} (\nu_s, \mu_0) ds \Big)^p \Big]
\\
\leq& 2^{p-1} \bE\Big[ \Big( \int_0^T \| b(s, x_0, \mu_0) \| ds \Big)^p \Big] + 2^{p-1} L^p T^p \bW_{C([0,T]; \cD)}^{(2)} (\nu, \mu_0')^p, 
\\
\bE\Big[ \Big( \int_0^T \| \sigma(s, x_0, \nu_s) \|^2 ds \Big)^{p/2} \Big] 
\leq& \bE\Big[ \Big( 2\int_0^T \| \sigma(s, x_0, \mu_0) \|^2 ds + 2L^2 \int_0^T \bW_\cD^{(2)} (\nu_s, \mu_0) ds \Big)^{p/2} \Big]
\\
\leq& 2^{p-1} \bE\Big[ \Big( \int_0^T \| \sigma(s, x_0, \mu_0) \|^2 ds \Big)^{p/2} \Big] + 2^{p-1} L^{p} T^{p/2}\bW_{C([0,T]; \cD)}^{(2)} (\nu, \mu_0')^p.
\end{align*}
Therefore, by Theorem \ref{thm:ExistUnique-LocLip-Ref}, we have existence and uniqueness of Equation \eqref{eq:thm:ExistUnique-LocLip-MVE1.1}. Consider the operator $\Xi: \cP_2\big( C([0,T]; \bR^d)\big) \to \cP_2\big( C([0,T]; \bR^d)\big)$ defined by
$$
\Xi[\nu] := \mu^{(\nu)},
$$
where $\mu^{(\nu)}$ is the law of the solution to Equation \eqref{eq:thm:ExistUnique-LocLip-MVE1.1}. Now, for any two measures $\nu^1, \nu^2 \in \cP_2\big( C([0,T]; \cD)\big)$, 
\begin{align*}
\Big\| X^{(\nu^1)}_t - X^{(\nu^2)}_t \Big\|^2 \leq
& 2\int_0^t \Big\langle X_s^{(\nu^1)} - X_s^{(\nu^2)}, b(s, X_s^{(\nu^1)}, \nu_s^1) - b(s, X_s^{(\nu^2)}, \nu_s^2) \Big\rangle ds
\\
& + 2\int_0^t \Big\langle X_s^{(\nu^1)} - X_s^{(\nu^1)}, \Big( \sigma(s, X_s^{(\nu^1)}, \nu_s^1) - \sigma(s, X_s^{(\nu^2)}, \nu_s^2) \Big) dW_s \Big\rangle
\\
& + \int_0^t \Big\| \sigma(s, X_s^{(\nu^1)}, \nu_s^1) - \sigma(s, X_s^{(\nu^2)}, \nu_s^2) \Big\|^2 ds
-2\int_0^t \Big\langle X_s^{(\nu^1)} - X_s^{(\nu^2)}, dk_s^{(\nu^1)} - dk_s^{(\nu^2)} \Big\rangle.
\end{align*}

The reflective term in the above expression is negative due to the convexity of the domain and Lemma \ref{lem:NormalToDomain}. Therefore, taking a supremum over time, expectations, and using Burkholder-Davis-Gundy inequality, we get
\begin{align*}
\bE\Big[& \| X^{(\nu^1)} - X^{(\nu^2)}\|_{\infty, [0,T]}^2 \Big] 
\\
\leq& 2L\int_0^T \bE\Big[ \|X^{(\nu^1)} - X^{(\nu^2)}\|_{\infty, [0,t]}^2 \Big] dt
+ 2L \bE\Big[ \|X^{(\nu^1)} - X^{(\nu^2)}\|_{\infty, [0,T]} \cdot \int_0^T \sup_{s\in[0,t]} \bW^{(2)}_{\cD}(\nu_s^1, \nu_s^2) dt \Big]
\\
&+ 4C_1L \bE\Big[ \| X^{(\nu^1)} - X^{(\nu^2)}\|_{\infty, [0,T]} \Big( \int_0^T  \sup_{s\in[0,t]} \bW_{\cD}^{(2)} (\nu_s^1, \nu_s^2)^2 dt \Big)^{1/2} \Big]
\\
&+ 4C_1L \bE\Big[ \| X^{(\nu^1)} - X^{(\nu^2)}\|_{\infty, [0,T]} \Big( \int_0^T \| X^{(\nu^1)} - X^{(\nu^2)} \|_{\infty, [0,t]}^2 dt\Big)^{1/2} \Big]
\\
&+ 2L^2\int_0^T \bE\Big[ \| X^{(\nu^1)} - X^{(\nu^2)}\|_{\infty, [0, t]}^2 dt
+ 2L^2 \int_0^T \sup_{s\in [0,t]} \bW_{\cD}^{(2)} (\nu_s^1, \nu_s^2)^2 dt.
\end{align*}

Careful application of Young's Inequality, Gr\"onwall's inequality and Equation \eqref{eq:thm:ExistUnique-LocLip-MVE2.1} yields that there exists a constant $K>0$ such that
$$
\bW_{C([0,T]; \cD)}^{(2)} \Big( \Xi[ \nu^1], \Xi[\nu^2] \Big)^2 \leq \bE\Big[ \| X^{(\nu^1)} - X^{(\nu^1)}\|_{\infty, [0,T]}^2 \Big] 
\leq
K \int_0^T \bW_{C([0,t]; \cD)}^{(2)} \Big(\nu^1, \nu^2\Big)^2 dt.
$$
Iteratively applying the operator $\Xi$ $n$ times gives
\begin{align*}
\bW_{C([0,T]; \cD)}^{(2)} \Big( \Xi^n[ \nu^1], \Xi^n[\nu^2] \Big)^2 
\leq& K^n \int_0^T \int_0^{t_1} ... \int_0^{t_{n-1}} \bW_{C([0,t_n]; \cD)}^{(2)} \Big(\nu^1, \nu^2\Big)^2 dt_n ... dt_2 dt_1
\\
\leq& \frac{K^n}{n!} \bW_{C([0,T]; \cD)}^{(2)}\Big( \nu^1, \nu^2 \Big)^2. \end{align*}
Choosing $n\in \bN$ such that $\tfrac{K^n}{n!}<1$, we obtain that the operator $\Xi^n$ is a contraction operator, so a unique fixed point on the metric space $\cP_2\big( C([0,T]; \cD)\big)$ paired with the Wasserstein metric must exist. 

This unique fixed point is the law of the McKean-Vlasov equation \eqref{eq:reflectedMVE}. 
\end{proof}
\begin{rem}
It is worth remarking that the framework of coefficients that satisfy a Lipschitz condition in their measure dependency with respect to the Wasserstein distance is broad, but in this manuscript we are predominantly interested in coefficients where the measure dependency is not Lipschitz.
\end{rem}

\subsubsection*{Main result: existence and uniqueness for McKean-Vlasov equations under reflection}

We next study McKean-Vlasov equations with the addition of a self-stabilizing drift term that does not satisfy a Lipschitz condition with respect to the Wasserstein distance. For example, in Equation \eqref{eq:MVE}, we have $f\ast \mu_t(x): = \int_{\cD} f(x - y) \mu_t(dy)$, the convolution of the vector field $f$ with the measure $\mu_t$. Consider
\begin{equation}
\label{eq:reflectedSSMVE}
\begin{split}
X_t =& \theta + \int_0^t b(s, X_s, \mu_s) ds + \int_0^t \sigma(s, X_s, \mu_s) dW_s + \int_0^t f \ast \mu_s(X_s) ds - k_t, 
\\
|k|_t=& \int_0^t \1_{\partial D}(X_s) d|k|_s, \qquad k_t = \int_0^t \1_{\partial \cD}(X_s) \n(X_s) d|k|_s, \qquad \bP\Big[ X_t \in dx\Big] = \mu_t(dx). 
\end{split}
\end{equation}
We show existence of a solution to the above reflected McKean-Vlasov equation under the following assumption. 
\begin{assumption}
\label{ass:ExistUnique-LocLip-SSMVE}
Let $r>1$ and $p>2r$. Let $\theta:\Omega \to \cD$, $b:[0,T] \times \cD \times \cP_2(\cD) \to \bR^d$, $f:\bR^d \to \bR^d$ and $\sigma:[0,T] \times \cD \times \cP_2(\cD) \to \bR^{d \times d'}$. Assume that
\begin{itemize}
\item $\theta \in L^p( \cF_0, \bP; \cD)$ and $\theta \sim \mu_\theta$, 
\item $\exists x_0\in \cD$ such that $b$ and $\sigma$ satisfy the integrability conditions
$$
\int_0^T \| b(s, x_0, \delta_{x_0}) \| ds 
\vee
\int_0^T \| \sigma(s, x_0, \delta_{x_0}) \|^2 ds < \infty. 
$$
\item $\exists L>0$ such that for almost all $s\in [0,T]$, $\forall \mu, \nu \in \cP_2(\cD)$ and $\forall x, y\in \cD$, 
\begin{align*}
\Big\langle b(s, x, \mu) - b(s, y, \mu), x-y \Big\rangle \leq L\| x-y \|^2, 
\quad
\| \sigma(s, x, \mu) - \sigma(s, y, \mu) \| \leq L \| x-y \|, 
\\
\| b(s, x, \mu) - b(s, x, \nu) \| \leq L \bW^{(2)}_\cD (\mu, \nu), 
\quad
\| \sigma(s, x, \mu) - \sigma(s, x, \nu) \| \leq L \bW^{(2)}_\cD (\mu, \nu),  
\end{align*}
\item $f(0)=0$, $f(x) = -f(-x)$ and $\exists L>0$ such that $\forall x, y \in \bR^d$, 
$
\big\langle f(x) - f(y), x-y\big\rangle \leq L \| x-y \|^2$ , 
\item $\forall n\in \bN$, $\exists L_n>$ such that $\forall x,y\in \cD \cap \overline{B_n(x_0)}$, 
$$
\| b(s, x, \mu) - b(s, y, \mu) \| \leq L_n \| x-y \| 
\quad \textrm{for almost all $(s, \omega) \in [0,T] \times \Omega$,  }
$$
\item $\exists L>0$ such that $\forall x,y\in \bR^d$ 
$$
\| f(x) - f(y) \| \leq C \| x-y \| \big( 1 + \| x \|^{r-1} + \| y \|^{r-1} \big), 
\quad 
\| f(x) \| \leq C \big( 1 + \| x \|^r\big).
$$
\end{itemize}
\end{assumption}

\begin{theorem}
\label{thm:ExistUnique-LocLip-SSMVE}
Let $\cD\subseteq \bR^d$ (not necessarily bounded) satisfy Assumption \ref{assumption:domain}. Let $r>1$ and $p>2r$. Let $W$ be a $d'$ dimensional Brownian motion. Let $\theta$, $b$, $\sigma$ and $f$ satisfy Assumption \ref{ass:ExistUnique-LocLip-SSMVE}. 

Then there exists a unique solution to the reflected McKean-Vlasov equation \eqref{eq:reflectedSSMVE} in $\cS^p([0,T])$ (explicit $\cS^p$-norm bounds are given below in \eqref{eq momoment bounds for existence uniqueness}).
\end{theorem}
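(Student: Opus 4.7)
The strategy is to reduce to Theorem \ref{thm:ExistUnique-LocLip-MVE} via an outer Picard-type fixed point in the measure argument that decouples the non-Lipschitz self-stabilizing term $f\ast\mu_s$ from the dynamics, combined with uniform moment estimates that control its polynomial-growth tail.

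\textbf{Step 1 (a priori moment bounds).} I would first establish that any solution $X$ of \eqref{eq:reflectedSSMVE} satisfies $\bE[\|X\|_{\infty,[0,T]}^p] < \infty$, with an explicit bound depending only on the data. Applying It\^o's formula to $\|X_t - x_0\|^p$ and using Assumption \ref{assumption:domain} together with Lemma \ref{lem:NormalToDomain} to discard the reflection contribution (since $\langle X_s - x_0, dk_s\rangle \leq 0$), then invoking the one-sided Lipschitz property of $b$, the oddness and monotonicity of $f$ (which bound $\langle x, f\ast\mu_s(x)\rangle$ linearly in $\|x\|^2$ plus moments of $\mu_s$), and standard BDG inequalities, yields a Gr\"onwall-type estimate. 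The condition $p>2r$ is crucial here because the $f$-interaction generates terms of order $\|X_s\|^{r+1}$ under the integral that must be absorbed into $\|X_s\|^p$ after Young's inequality, leaving the remainder to be closed by Gr\"onwall.

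\textbf{Step 2 (decoupled inner problem).} For a fixed $\nu \in \cP_2(C([0,T]; \cD))$ with uniformly bounded $p$-moments, consider the reflected SDE obtained by freezing the convolution argument:
\begin{equation*}
Y_t^{(\nu)} = \theta + \int_0^t b(s, Y_s^{(\nu)}, \nu_s)\,ds + \int_0^t f\ast\nu_s(Y_s^{(\nu)})\,ds + \int_0^t \sigma(s, Y_s^{(\nu)}, \nu_s)\,dW_s - k_t^{(\nu)}.
\end{equation*}
Because $\nu_s$ has finite $p$-th moment, the map $x \mapsto f\ast\nu_s(x)$ inherits local Lipschitz regularity and a one-sided Lipschitz bound from the corresponding hypotheses on $f$. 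Therefore the composite drift $b(s, \cdot, \nu_s) + f\ast\nu_s(\cdot)$ satisfies the hypotheses of Theorem \ref{thm:ExistUnique-LocLip-Ref}, producing a unique $\cS^p$-solution $Y^{(\nu)}$ with explicit norm bounds.

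\textbf{Step 3 (fixed point in measure).} Define $\Psi(\nu) := \mathrm{Law}(Y^{(\nu)})$ as a probability on $C([0,T]; \cD)$, restricted to the closed subset $\mathcal{M}$ of measures satisfying the uniform moment ceiling from Step 1. For $\nu^1, \nu^2 \in \mathcal{M}$, I apply It\^o to $\|Y_t^{(\nu^1)} - Y_t^{(\nu^2)}\|^2$; the reflection terms vanish again by convexity of $\cD$, and the $b$ and $\sigma$ contributions produce standard $\bW^{(2)}(\nu^1_s, \nu^2_s)$-controlled terms. The delicate contribution is
\begin{equation*}
\int_0^t \big\langle Y_s^{(\nu^1)} - Y_s^{(\nu^2)},\; f\ast\nu^1_s(Y_s^{(\nu^1)}) - f\ast\nu^2_s(Y_s^{(\nu^2)})\big\rangle\,ds,
\end{equation*}
which I split through an optimal $\bW^{(2)}$-coupling $\pi_s$ between $\nu^1_s$ and $\nu^2_s$. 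The diagonal part is dominated by the one-sided Lipschitz bound on $f$; the cross term is estimated via the polynomial Lipschitz bound $\|f(u)-f(v)\| \leq C\|u-v\|(1+\|u\|^{r-1}+\|v\|^{r-1})$ and absorbed by Young's inequality, with the $(r-1)$-power factors controlled uniformly by the Step 1 moment bounds thanks to $p>2r$. This is precisely the mechanism of Lemma \ref{lemma:Gamma-FirstContraction}. One obtains
\begin{equation*}
\bE\Big[\big\|Y^{(\nu^1)} - Y^{(\nu^2)}\big\|_{\infty,[0,T]}^2\Big] \leq K \int_0^T \bW_{C([0,t]; \cD)}^{(2)}\big(\nu^1, \nu^2\big)^2\,dt.
\end{equation*}
Iterating as in the proof of Theorem \ref{thm:ExistUnique-LocLip-MVE} furnishes the factor $K^n/n!$, so $\Psi^n$ is a strict contraction on $\mathcal{M}$ for $n$ large. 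Its unique fixed point is the law of a solution to \eqref{eq:reflectedSSMVE}, and the $\cS^p$-bound \eqref{eq momoment bounds for existence uniqueness} follows by applying Step 1 to this fixed point.

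\textbf{Main obstacle.} The critical step is Step 3: producing a $\bW^{(2)}$-type contraction estimate for the convolution term despite the absence of a global Lipschitz-in-measure hypothesis on $f$. The workaround marries the optimal coupling decomposition with the uniform polynomial moment bounds from Step 1, and the interplay is precisely what forces $p > 2r$ so that the $(r-1)$-order factors produced by the superlinear growth of $f$ are absorbed by moments of the solution rather than propagating into the Gr\"onwall loop.
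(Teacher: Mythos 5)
Your strategy diverges from the paper's in the choice of fixed-point object, but the more serious issue is that Step~1 does not close in the way you describe. Applying It\^o's formula to $\|X_t-x_0\|^p$ produces the term $\bE\big[\|X_s-x_0\|^{p-2}\langle X_s-x_0, \overline{\bE}[f(X_s-\overline{X}_s)]\rangle\big]$. The symmetrisation that exploits the oddness of $f$ --- the identity \eqref{eq:prop:SSMVE-Moments1.0} --- kills the superlinear growth only when $p=2$; for $p>2$ the weight $\|X_s-x_0\|^{p-2}$ destroys the symmetry, and falling back on the one-sided Lipschitz or polynomial-growth bound leaves a residual of the form $\|X_s-x_0\|^{p-1}\big(1+\|X_s-\overline{X}_s\|^r\big)$, of total degree $p-1+r>p$. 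Young's inequality with conjugate exponents $p/(p-1)$ and $p$ then produces $\|X_s-\overline{X}_s\|^{rp}$, a strictly higher-order moment than the $p$-th one you are trying to bound, so Gr\"onwall does not close --- and $p>2r$ does not rescue this. This is precisely what makes the paper's Proposition~\ref{prop:SSMVE-Moments} non-trivial: it argues by induction on $m$, replaces the direct estimate by a bound on the \emph{coupled} quantity $\sup_t\bE[\|X_t-x_0\|^{2m}]+\bE[\|X_t-\tilde{X}_t\|^{2m}]$ (via the centred-process identity \eqref{eq:prop:SSMVE-Moments1.2}), and uses that the one-sided Lipschitz of $f$ \emph{does} apply cleanly to the $\|X_t-\tilde{X}_t\|^{2m}$ piece; neither summand admits a closable Gr\"onwall on its own, but their sum does. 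In the paper $p>2r$ serves to make $f\ast\mu_t$ integrable (so $\Gamma$ is well-defined) and to control the $2r$-moments appearing in the contraction estimate of Lemma~\ref{lemma:Gamma-FirstContraction}, not to close the $p$-moment Gr\"onwall loop as you claim.

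On the fixed-point structure: the paper does not iterate on measures but on a weighted function space $\Lambda_{[0,T],r}$ (Definition~\ref{defn:WeirdNorm-Space}), decoupling only the convolution via $\Gamma[g]=f\ast\mu^{(g)}_\cdot$ (Definition~\ref{dfn:Gamma-ContractionOp}), where $X^{(g)}$ solves the McKean--Vlasov equation~\eqref{eq:reflected-SSMVE-b} through Theorem~\ref{thm:ExistUnique-LocLip-MVE} rather than the classical Theorem~\ref{thm:ExistUnique-LocLip-Ref}. Your measure-level Picard map $\Psi$ is a plausible alternative, but the invariance $\Psi(\mathcal{M})\subset\mathcal{M}$ that you invoke does not hold on the full interval: the $\cS^p$-bound on $Y^{(\nu)}$ produced by Theorem~\ref{thm:ExistUnique-LocLip-Ref} scales like $T^pM^r$ in the moment ceiling $M$ of $\mathcal{M}$ (through the $\int_0^T\|f\ast\nu_s(x_0)\|\,ds$ contribution), and since $r>1$ the inequality $c_1+c_2\,T^pM^r\leq M$ only has a solution for small $T$. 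Any correct version of your argument must first produce a local-in-time solution and then concatenate, using a priori moment bounds to make the local step size uniform --- which is exactly the structure of Proposition~\ref{prop:SSMVE-ExistenceLocalSolution} followed by the concatenation in the paper's proof of Theorem~\ref{thm:ExistUnique-LocLip-SSMVE}. The global factorial contraction $K^n/n!$ on $[0,T]$ that you claim is not available here.
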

The proof of this theorem is the content of the next section.

\begin{rem}
A nuanced detail of the following proof is the calculation of moments and potentially singular and non-integrable drifts. In  \cite{imkeller2019Differentiability}, the authors studied processes where the drift term could have polynomial growth that was greater than the  moments of the final solution. The conclusion was that time integrals of these drift terms ``smooth out'' the non-integrability. 

In this paper, we only require a one-sided Lipschitz condition for the spatial variable. However, we were unable to remove the polynomial growth condition for the self-stabilizing term $f$. This is because one needs integrability of the convolution of the law of the solution with the vector field $f$ before the self-stabilisation acts to push deviating paths back towards the mean of the distribution. 
\end{rem}

\subsection{Proof of Theorem \ref{thm:ExistUnique-LocLip-SSMVE}}
\label{subsec:ExistUniq-SSMVEs}

This proof is inspired by \cite{BRTV}. Unlike the proof of Theorem \ref{thm:ExistUnique-LocLip-MVE} which constructs a contraction operator on the space of measures, we construct a fixed point on a space of functions. Each function will give rise to a McKean-Vlasov process by substituting it into the equation as a drift term. Then, the law of this McKean-Vlasov equation is convolved with the vector field $f$ to obtain a new function. This trick allows us to bypass the non-Lipschitz property of the functional $g(x, \mu):= f \ast \mu(x)$  while still exploiting the one-sided Lipschitz condition in the spatial variable.

Our contributions in this section include developing this method to allow for diffusion terms that are not constant. This is novel, even before the addition of a domain of constraint. The non-constant diffusion complicates the computation of moment estimates which are key to this method. Of particular interest is Proposition \ref{prop:SSMVE-Moments}, which diverges from previous literature.  

\begin{defn}
\label{defn:WeirdNorm-Space}
Let $r>1$. Let $x_0 \in \cD$ and $L>0$ be as in Assumption \ref{ass:ExistUnique-LocLip-SSMVE}. For $g:[0,T]\times \cD \to \bR^d$, let
$$
\| g \|_{[0,T],r}:= \sup_{t \in [0,T]} \left( \sup_{x\in \cD}  \frac{ \|g(t,x)\|}{1 + \|x-x_0\|^{r}}  \right).
$$
Let $\Lambda_{[0,T], r}$ be the space of all functions $g:[0,T] \times \cD \to \bR^d$ such that $\| g\|_{[0,T], r}<\infty$ and
$$
\langle g(t,x) - g(t,y), x-y\rangle \leq L \|x-y\|^2\qquad \forall xy,\in \cD,~t\in[0,T]. 
$$
\end{defn}

The space $\Lambda_{[0,T], r}$ is a Banach space. For $g\in \Lambda_{[0,T], r}$, consider the reflected McKean-Vlasov equation
\begin{equation}
\label{eq:reflected-SSMVE-b}
\begin{split}
X_t^{(g)} =& \theta + \int_0^t b(s, X_s^{(g)}, \mu_s^{(g)}) ds + \int_0^t \sigma(s, X_s^{(g)}, \mu_s^{(g)}) dW_s  + \int_0^t g(s, X_s^{(g)}) ds - k_t^{(g)},
\\
|k^{(g)}|_t=& \int_0^t \1_{\partial D}(X_s^{(g)}) d|k^{(g)}|_s, \quad 
k_t^{(g)} = \int_0^t \1_{\partial \cD}(X_s^{(g)}) \n(X_s^{(g)}) d|k^{(g)}|_s, \quad 
\bP\Big[ X_t^{(g)} \in dx\Big] = \mu_t^{(g)}(dx). 
\end{split}
\end{equation}

By Theorem \ref{thm:ExistUnique-LocLip-MVE}, we know that there exists a unique solution to this McKean-Vlasov equation for every choice of $g\in \Lambda_{[0,T], r}$ and every $r\geq 1$. Further, we have the moment estimate that for $\varepsilon>0$ and $T_0\in [0,T-\varepsilon]$, 

\begin{align}
\nonumber
&\sup_{t\in [T_0, T_0+\varepsilon]} \bE\Big[ \| X^{(g)}_t - x_0 \|^p \Big] 
\\
\nonumber
&\leq
\Bigg( 4\bE\Big[ \| X_{T_0}^{(g)} - x_0 \|^p \Big]
+ \big( 4 (p-1)\big)^{p-1} \bigg( \Big( \int_{T_0}^{T_0+\varepsilon} \| b(r, x_0, \delta_{x_0} ) \| dr\Big)^p + \Big( \int_{T_0}^{T_0+\varepsilon} \| g(r, x_0) \| dr \Big)^p \bigg)
\\
\label{eq:reflected-SSMVE-b:moment}
&
\quad 
+ 2(p-1)^{p/2} \cdot (p-2)^{(p-2)/2} \cdot 4^{p/2} \Big( \int_{T_0}^{T_0+\varepsilon}  \| \sigma(r, x_0, \delta_{x_0} ) \|^2 dr \Big)^{\tfrac{p}{2}} \Bigg) 
\cdot \exp\Big( \big( 4pL + 2p(p-1)L^2  \big) \varepsilon \Big). 
\end{align}
Our challenge will be to find a $g$ such that $g(t, x) = f \ast \mu_t^{(g)}(x)$. 
\begin{defn}
\label{dfn:Gamma-ContractionOp}
Let $b$, $\sigma$ and $f$ satisfy Assumption \ref{ass:ExistUnique-LocLip-SSMVE}. Let $g\in \Lambda_{[0,T], r}$. Let $X^{(g)}$ be the unique solution to the McKean-Vlasov equation \eqref{eq:reflected-SSMVE-b} with law $\mu^{(g)}$.  Let $\Gamma: \Lambda_{[0,T], r} \to C([0,T] \times \cD; \bR^d)$ be defined by
$$
\Gamma[g](t, x):= f\ast \mu_t^{(g)}(x) = \bE\big[ f(x - X_t^{(g)}) \big]. 
$$
\end{defn}

Our goal is to demonstrate that the operator $\Gamma$ has a fixed point $g'$. Then the McKean-Vlasov equation $X^{(g')}$ that solves \eqref{eq:reflected-SSMVE-b} will be the solution to the McKean-Vlasov equation \eqref{eq:reflectedSSMVE}.
\begin{lemma}
\label{lem:Gamma-WellDefined}
Let $\Gamma$ be the operator defined in Definition \ref{dfn:Gamma-ContractionOp}. Then $\forall T_0\in[0,T]$ and $\forall \varepsilon>0$ such that $T_0+\varepsilon< T$, $\Gamma $ maps $\Lambda_{[T_0,T_0+\varepsilon], r}$ to $\Lambda_{[T_0,T_0+\varepsilon], r}$.
\end{lemma}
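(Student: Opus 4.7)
The plan is to verify the two defining conditions of the space $\Lambda_{[T_0,T_0+\varepsilon],r}$ separately for $\Gamma[g](t,x)=\bE\big[f(x-X_t^{(g)})\big]$, using the polynomial growth of $f$ and its one-sided Lipschitz property from Assumption \ref{ass:ExistUnique-LocLip-SSMVE}, together with the moment estimate \eqref{eq:reflected-SSMVE-b:moment} guaranteed by Theorem \ref{thm:ExistUnique-LocLip-MVE}.

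\textbf{Step 1: One-sided Lipschitz condition.} This is the easier half. For any $x,y\in\cD$ and $t\in[T_0,T_0+\varepsilon]$, I would bring the inner product inside the expectation and note the cancellation $(x-X_t^{(g)})-(y-X_t^{(g)})=x-y$, so that the one-sided Lipschitz bound on $f$ from Assumption \ref{ass:ExistUnique-LocLip-SSMVE} applies pathwise:
\begin{align*}
\big\langle \Gamma[g](t,x)-\Gamma[g](t,y),\ x-y\big\rangle
&=\bE\Big[\big\langle f(x-X_t^{(g)})-f(y-X_t^{(g)}),\ (x-X_t^{(g)})-(y-X_t^{(g)})\big\rangle\Big]\\
&\leq L\,\|x-y\|^2.
\end{align*}
Taking the constant $L$ from Assumption \ref{ass:ExistUnique-LocLip-SSMVE} gives exactly the one-sided Lipschitz condition required by Definition \ref{defn:WeirdNorm-Space}.

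\textbf{Step 2: Polynomial growth bound.} Using $\|f(z)\|\leq C(1+\|z\|^r)$ from Assumption \ref{ass:ExistUnique-LocLip-SSMVE} and the elementary inequality $\|a+b\|^r\leq 2^{r-1}(\|a\|^r+\|b\|^r)$ applied to $a=x-x_0$ and $b=x_0-X_t^{(g)}$, I get
\[
\|\Gamma[g](t,x)\|\leq C\Big(1+2^{r-1}\|x-x_0\|^r+2^{r-1}\bE\big[\|X_t^{(g)}-x_0\|^r\big]\Big).
\]
Dividing by $1+\|x-x_0\|^r$ and taking suprema over $x\in\cD$ and $t\in[T_0,T_0+\varepsilon]$ shows that $\|\Gamma[g]\|_{[T_0,T_0+\varepsilon],r}$ is bounded by a constant depending only on $C$, $r$ and $\sup_{t\in[T_0,T_0+\varepsilon]}\bE[\|X_t^{(g)}-x_0\|^r]$.

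\textbf{Step 3: Controlling the moment term.} The main (and only real) obstacle is ensuring that $\sup_{t\in[T_0,T_0+\varepsilon]}\bE[\|X_t^{(g)}-x_0\|^r]<\infty$. Since $p>2r>r$, Jensen's inequality reduces this to the $p$-th moment estimate \eqref{eq:reflected-SSMVE-b:moment}. The finiteness of each term in that estimate must be checked: $\bE[\|X_{T_0}^{(g)}-x_0\|^p]$ is finite by Theorem \ref{thm:ExistUnique-LocLip-MVE} applied up to time $T_0$ (or by the initial condition $\theta\in L^p$ in the base case $T_0=0$); the time integrals of $\|b(r,x_0,\delta_{x_0})\|$ and $\|\sigma(r,x_0,\delta_{x_0})\|^2$ are finite by Assumption \ref{ass:ExistUnique-LocLip-SSMVE}; and crucially the $g$-term satisfies
\[
\int_{T_0}^{T_0+\varepsilon}\|g(r,x_0)\|\,dr\leq \varepsilon\cdot\|g\|_{[T_0,T_0+\varepsilon],r}\cdot(1+\|x_0-x_0\|^r)=\varepsilon\,\|g\|_{[T_0,T_0+\varepsilon],r}<\infty
\]
because $g\in\Lambda_{[T_0,T_0+\varepsilon],r}$. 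Combining the two steps gives $\Gamma[g]\in\Lambda_{[T_0,T_0+\varepsilon],r}$ as claimed, and the resulting quantitative bound on $\|\Gamma[g]\|_{[T_0,T_0+\varepsilon],r}$ will also be useful for the subsequent fixed-point argument on short subintervals.
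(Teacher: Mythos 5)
Your proposal is correct and follows essentially the same route as the paper's proof: verify the one-sided Lipschitz property of $\Gamma[g]$ by cancellation inside the expectation, bound $\|\Gamma[g]\|_{[T_0,T_0+\varepsilon],r}$ via the polynomial growth of $f$, and close the argument by appealing to the $p$-th moment estimate \eqref{eq:reflected-SSMVE-b:moment} and the bound $\int_{T_0}^{T_0+\varepsilon}\|g(s,x_0)\|\,ds\leq \varepsilon\,\|g\|_{[T_0,T_0+\varepsilon],r}$.
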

\begin{proof}
Fix $T_0\in[0,T]$ and $\varepsilon>0$ appropriately. Let $g\in \Lambda_{[T_0,T_0 +\varepsilon], r}$. Then $\forall x, y\in \bR^d$ and $\forall t\in[T_0,T_0 + \varepsilon]$, 
\begin{align*}
\Big\langle x-y, \Gamma[g](t, x) - \Gamma[g](t, y)\Big \rangle 
=
\int_{\cD} \Big\langle x-y, f(x-u) - f(y-u) \Big\rangle d\mu_t^{(g)}(u)
\leq
L \| x-y \|^2.  
\end{align*}
Secondly, 
\begin{align*}
\bE\Big[ f(X_t^{(g)} - x) \Big] \leq
& 2C + \big(C+2^r\big) \Big( \| x - x_0 \|^r + \bE\Big[ \| X_t^{(g)} \|^r \Big] \Big)
\\
\leq& \Big(2C + 2^{r+1}\Big) \Big( 1 + \| x-x_0 \|^r\Big) \Big( 1 + \bE\Big[ \| X_t^{(g)} - x_0 \|^r\Big] \Big). 
\end{align*}
By Assumption \ref{ass:ExistUnique-LocLip-SSMVE}, we know the process $X^{(g)}$ has finite moments of order $p>2r$. Thus 
\begin{equation}
\label{eq:lem:Gamma-WellDefined-1}
\Big\| \Gamma[g] \Big\|_{[T_0,T_0+\varepsilon], r} \leq \Big( 2C + 2^{r+1}\Big)\cdot \Big( 1 + \sup_{t\in[T_0,T_0+\varepsilon]} \bE\Big[ \| X^{(g)}_t -x_0 \|^r \Big] \Big). 
\end{equation}
Combining these with Equation \eqref{eq:reflected-SSMVE-b:moment} and using that
\begin{align*}
\Big( \int_{T_0}^{T_0+\varepsilon} \| g(s, x_0) \| ds \Big)^p \leq \varepsilon^p \| g\|_{[T_0, T_0+\varepsilon], r}^p, 
\end{align*}
we obtain that 
\begin{align}
\nonumber
\Big\| \Gamma[g] \Big\|_{[T_0, T_0+\varepsilon], r}
\leq& \Big(2C+2^{r+1}\Big)\Big( 1+ \sup_{t\in[0,T_0]} \bE\Big[ \| X_t^{(g)} - x_0 \|^r \Big] \Big)
\\
\nonumber
&+ \Bigg( \big( 4 (p-1)\big)^{p-1} \bigg( \Big( \int_{T_0}^{T_0+\varepsilon} \| b(s, x_0, \delta_{x_0} ) \| ds\Big)^p + \Big( \int_{T_0}^{T_0+\varepsilon} \| g(s, x_0) \| ds \Big)^p \bigg)
\\
\nonumber
&\quad + 2(p-1)^{p/2} \cdot (p-2)^{(p-2)/2} \cdot 4^{p/2} \Big( \int_{T_0}^{T_0+\varepsilon}  \| \sigma(s, x_0, \delta_{x_0} ) \|^2 ds \Big)^{\tfrac{p}{2}} \Bigg) 
\\
\label{eq:lem:Gamma-WellDefined-2}
&\quad \cdot \exp\Big( \big( 4pL + 2p(p-1)L^2  \big) \varepsilon \Big). 
\end{align}

Taking $T_0=0$ and $\varepsilon = T$, we get $\Big\| \Gamma[g]\Big\|_{[0,T], r}<\infty$ for any $g\in \Lambda_{[0,T], r}$. 

\end{proof}

\begin{lemma}
\label{lemma:Gamma-FirstContraction}
Let $T_0\in [0,T]$ and let $\varepsilon>0$ such that $T_0+\varepsilon < T$. Let $\Gamma$ be the operator given in Definition \ref{dfn:Gamma-ContractionOp}. Then there exists a constant $K$ such that $\forall g_1, g_2 \in \Lambda_{[T_0,T_0+\varepsilon], r}$ with  $g_1(t) = g_2(t)$ $\forall t\in[0,T_0]$ we have 
$$
\Big\| \Gamma[g_1] - \Gamma[g_2] \Big\|_{[T_0,T_0+\varepsilon], r} \leq \| g_1 - g_2 \|_{[T_0, T_0 + \varepsilon], r} K \sqrt{\varepsilon} e^{K \varepsilon}. 
$$
\end{lemma}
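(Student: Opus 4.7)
The plan is to reduce the claim to a quantitative $L^2$-estimate on $X_t^{(g_1)} - X_t^{(g_2)}$ and then combine it with the polynomial-growth control on $f$ to recover the $\|\cdot\|_{[T_0,T_0+\varepsilon],r}$-norm bound.

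First I would write
\[
\Gamma[g_1](t,x) - \Gamma[g_2](t,x) = \bE\big[ f(x-X_t^{(g_1)}) - f(x-X_t^{(g_2)}) \big],
\]
apply the polynomial Lipschitz bound $\|f(u)-f(v)\|\leq C\|u-v\|(1+\|u\|^{r-1}+\|v\|^{r-1})$ from Assumption \ref{ass:ExistUnique-LocLip-SSMVE} and then Cauchy-Schwarz, so that the expectation splits into a factor $\bE[\|Y_t\|^2]^{1/2}$ with $Y_t:=X_t^{(g_1)}-X_t^{(g_2)}$ and a polynomial-moment factor. Using $\|x-X_t^{(g_i)}\|^{r-1}\leq C(\|x-x_0\|^{r-1}+\|X_t^{(g_i)}-x_0\|^{r-1})$, the condition $p>2r>2(r-1)$, and the moment estimate \eqref{eq:reflected-SSMVE-b:moment}, the polynomial-moment factor is dominated by $C(1+\|x-x_0\|^{r-1})\leq C(1+\|x-x_0\|^r)$ uniformly on $[T_0,T_0+\varepsilon]\times\cD$. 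Thus the task reduces to bounding $\sup_{t\in[T_0,T_0+\varepsilon]}\bE[\|Y_t\|^2]^{1/2}$ by a constant multiple of $\sqrt{\varepsilon}\,e^{K\varepsilon}\|g_1-g_2\|_{[T_0,T_0+\varepsilon],r}$.

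Second, since $g_1\equiv g_2$ on $[0,T_0]$, the pathwise uniqueness statement of Theorem \ref{thm:ExistUnique-LocLip-MVE} gives $Y_{T_0}=0$. Applying It\^o's formula to $\|Y_t\|^2$ on $[T_0,t]$ and taking expectations, the reflective cross-term $-2\int\langle Y_s,dk_s^{(g_1)}-dk_s^{(g_2)}\rangle$ is non-positive by convexity of $\cD$ and Lemma \ref{lem:NormalToDomain}, and the It\^o martingale vanishes in expectation. The drift difference of $b$ is controlled by the one-sided spatial Lipschitz and the measure-Lipschitz assumptions together with $\bW_\cD^{(2)}(\mu_s^{(g_1)},\mu_s^{(g_2)})^2\leq \bE[\|Y_s\|^2]$; the $\sigma$ quadratic-variation is treated analogously. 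The critical step is the decomposition
\[
g_1(s,X_s^{(g_1)})-g_2(s,X_s^{(g_2)}) = \big(g_1(s,X_s^{(g_1)})-g_1(s,X_s^{(g_2)})\big) + \big(g_1(s,X_s^{(g_2)})-g_2(s,X_s^{(g_2)})\big),
\]
whose inner product with $Y_s$ gives, respectively, a term controlled by the one-sided Lipschitz property built into the definition of $\Lambda_{[T_0,T_0+\varepsilon],r}$, and a term absorbed by Young's inequality as $\tfrac12\|Y_s\|^2+\tfrac12\|g_1-g_2\|_{[T_0,T_0+\varepsilon],r}^2(1+\|X_s^{(g_2)}-x_0\|^r)^2$.

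Third, invoking \eqref{eq:reflected-SSMVE-b:moment} once more (requiring $p>2r$) to bound $\sup_{s}\bE[(1+\|X_s^{(g_2)}-x_0\|^r)^2]$ by a finite constant, I arrive at a Gr\"onwall-ready inequality
\[
\bE[\|Y_t\|^2] \leq K\int_{T_0}^t\bE[\|Y_s\|^2]\,ds + K\varepsilon\,\|g_1-g_2\|_{[T_0,T_0+\varepsilon],r}^2,\qquad t\in[T_0,T_0+\varepsilon],
\]
from which Gr\"onwall yields $\bE[\|Y_t\|^2]\leq K\varepsilon e^{K\varepsilon}\|g_1-g_2\|_{[T_0,T_0+\varepsilon],r}^2$. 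Combining this with the first step concludes the proof after relabelling $K$. The principal technical obstacle is securing the $\sqrt{\varepsilon}$ prefactor (rather than merely a bounded constant), which is what will make iterated powers of $\Gamma$ contractive in the subsequent fixed-point argument. This hinges on the uniformity over $[T_0,T_0+\varepsilon]$ of the moment bound \eqref{eq:reflected-SSMVE-b:moment} and on the exponent condition $p>2r$ of Assumption \ref{ass:ExistUnique-LocLip-SSMVE}, without which the polynomial factors $(1+\|X_s^{(g_i)}-x_0\|^{r-1})^2$ and $(1+\|X_s^{(g_2)}-x_0\|^r)^2$ could not be absorbed into the constant. Note that the emerging $K$ implicitly depends on $\|g_1\|_{[T_0,T_0+\varepsilon],r}$ and $\|g_2\|_{[T_0,T_0+\varepsilon],r}$ through \eqref{eq:reflected-SSMVE-b:moment}, which will constrain the subsequent contraction argument to a bounded subset of $\Lambda_{[T_0,T_0+\varepsilon],r}$.
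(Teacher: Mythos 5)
Your proposal is correct and follows essentially the same route as the paper: It\^o's formula applied to $\|X_t^{(g_1)}-X_t^{(g_2)}\|^2$ on $[T_0,T_0+\varepsilon]$ (using $Y_{T_0}=0$ from uniqueness), the identical decomposition of the $g$-difference into a one-sided-Lipschitz part and a $\|g_1-g_2\|_{[T_0,T_0+\varepsilon],r}$-part, Young's inequality plus Gr\"onwall to extract the $\sqrt{\varepsilon}$ factor, and finally Cauchy--Schwarz with the polynomial growth of $f$ and the moment bounds to transfer this to the weighted norm. Your closing remark that $K$ implicitly depends on $\|g_1\|_{[T_0,T_0+\varepsilon],r}$ and $\|g_2\|_{[T_0,T_0+\varepsilon],r}$ through \eqref{eq:reflected-SSMVE-b:moment} is both accurate and sharper than the paper's in-proof claim that the moment bounds ``depend only on $b$ and $\sigma$'', and correctly anticipates why the later fixed-point argument must be restricted to a ball $\Lambda_{[\cdot],r,K}$.
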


\begin{proof}
Let $g_1,g_2:[0,T] \times \cD \to \bR^d$ such that $g_1(t) = g_2(t)$ for $t\in[0,T_0]$. Let $X^{(g_1)}$ and $X^{(g_2)}$ be solutions to Equation \eqref{eq:reflected-SSMVE-b}. Firstly, for $t\in [T_0, T_0 + \varepsilon]$ we have, applying It\^o's formula, 
%
%
\begin{align*}
\| X_t^{(g_1)} &- X_t^{(g_2)} \|^2 
\\
=
&
2\int_{T_0}^t \Big\langle X_s^{(g_1)} - X_s^{(g_2)}, b(s, X_s^{(g_1)}, \mu_s^{(g_1)}) - b(s, X_s^{(g_2)}, \mu_s^{(g_2)}) \Big\rangle ds
\\
&+ 2\int_{T_0}^t \Big\langle X_s^{(g_1)} - X_s^{(g_2)}, g_1(X_s^{(g_1)}) - g_1(X_s^{(g_2)}) \Big\rangle ds
+ 2\int_{T_0}^t \Big\langle X_s^{(g_1)} - X_s^{(g_2)}, g_1(X_s^{(g_2)}) - g_2(X_s^{(g_2)}) \Big\rangle ds 
\\ 
&+2\int_{T_0}^t \Big\langle X_s^{(g_1)} - X_s^{(g_2)}, \Big(\sigma(s, X_s^{(g_1)}, \mu_s^{(g_1)}) - \sigma(s, X_s^{(g_2)}, \mu_s^{(g_2)}) \Big)dW_s \Big\rangle
\\
&+\int_{T_0}^t \Big\| \sigma(s, X_s^{(g_1)}, \mu_s^{(g_1)}) - \sigma(s, X_s^{(g_2)}, \mu_s^{(g_2)}) \Big\|^2 ds 
- 2\int_{T_0}^t \Big\langle X_s^{(g_1)} - X_s^{(g_2)}, dk_s^{(g_1)} - dk_s^{(g_2)}\Big\rangle.
\end{align*}
Taking expectations, a supremum over time and applying Lemma \ref{lem:NormalToDomain}, we get
\begin{align*}
\sup_{t\in[T_0,T_0+\varepsilon ]} \bE\Big[ \| X_t^{(g_1)} - X_t^{(g_2)} \|^2 \Big]\leq& (6L + 4L^2)\int_{T_0}^{T_0+\varepsilon} \sup_{s\in[T_0,T_0+t]} \bE\Big[ \| X_s^{(g_1)} - X_s^{(g_2)} \|^2 \Big] dt
\\
&+2\int_{T_0}^{T_0+\varepsilon} \bE\Big[ \| X_t^{(g_1)} - X_t^{(g_2)} \| \cdot \| g_1 - g_2\|_{[T_0,T_0+t], r} \Big( 1 + \| X_t^{(g_2)} - x_0 \|^{r} \Big) \Big] dt. 
\end{align*}

An application of Gr\"onwall's Inequality yields
\begin{align}
\nonumber
\sup_{t\in[T_0,T_0+\varepsilon]} &\bE\Big[ \| X_t^{(g_1)} - X_t^{(g_2)} \|^2 \Big]
\\
\label{eq:prop:Gamma-FirstContraction1.1}
&\leq 8\| g_1 - g_2\|_{[T_0,T_0+\varepsilon], r}^2 \cdot \varepsilon \cdot e^{(8L^2+12L)\varepsilon} \cdot \bigg( 1 + \sup_{t\in [T_0, T_0+\varepsilon]} \bE\Big[ \| X^{(g_2)}_t - x_0 \|^{2r} \Big] \bigg).
\end{align}
Let $x\in \cD$. Using the polynomial growth assumption of $f$, we have that
\begin{align}
\nonumber
\bE\Big[ &f(x - X_t^{(g_1)}) - f(x - X_t^{(g_2)}) \Big]
\\
\nonumber
\leq& (C+2^r) 
\bE\Big[ \| X_t^{(g_1)} - X_t^{(g_2)} \| 
\cdot 
\big( 1 + \| x - x_0 \|^r\big) 
\cdot 
\big( 1 + \| X_t^{(g_1)} - x_0 \|^{r} + \| X_t^{(g_2)} - x_0 \|^{r} \big) \Big]
\\
\label{eq:prop:Gamma-FirstContraction2.1}
\leq& (C+2^r) \cdot \Big( 1 + \|x - x_0 \|^r\Big) \bE\Big[ \| X_t^{(g_1)} - X_t^{(g_2)} \|^2\Big]^{\tfrac{1}{2}} \cdot \bE\Big[ \Big( 1 + \| X_t^{(g_1)} - x_0 \|^{r} + \| X_t^{(g_2)} - x_0 \|^{r} \Big)^2 \Big]^{\tfrac{1}{2}}.
\end{align}
By Assumption \ref{ass:ExistUnique-LocLip-SSMVE} and \eqref{eq:reflected-SSMVE-b:moment} we have that
$$
\sup_{t\in [0,T]} \bE\Big[ \| X_t^{(g_1)} - x_0 \|^{2r} \Big], \quad
\sup_{t\in [0,T]} \bE\Big[ \| X_t^{(g_2)} - x_0 \|^{2r} \Big] <\infty. 
$$
Further, these bounds are uniform and depend only on $b$ and $\sigma$. 

Substituting Equation \eqref{eq:prop:Gamma-FirstContraction1.1} into Equation \eqref{eq:prop:Gamma-FirstContraction2.1}, we get 
\begin{align}
\nonumber
\Big\|& \Gamma[g_1] - \Gamma[g_2] \Big\|_{[T_0,T_0+\varepsilon], r} = \sup_{t\in [T_0,T_0+\varepsilon]} \sup_{x\in \cD} \frac{\bE\Big[ f(x - X_t^{(g_1)}) - f(x - X_t^{(g_2)}) \Big] }{1 + | x - x_0|^r}
\\
\label{eq:prop:Gamma-FirstContraction}
&\leq (C+2^r) 3\sqrt{8} \| g_1 - g_2\|_{[T_0,T_0+\varepsilon], r} \sqrt{\varepsilon} e^{(4L^2+6L)\varepsilon} \Bigg( 1+ \sup_{t\in [T_0, T_0+\varepsilon]} \bE\Big[ \| X_t^{(g_1)} \|^{2r} + \| X_t^{(g_2)} \|^{2r} \Big] \Bigg) . 
\end{align}

\end{proof}

Next, our goal is to establish a subset on which this operator is a contraction operator. 

\begin{defn}
Let $K>0$. For $T>0$ and $r>1$, we define
$$
\Lambda_{[0,T], r, K}:= \Big\{ g\in \Lambda_{[0,T], r}: \|g\|_{[0,T], r}\leq K\Big\}. 
$$
\end{defn}

Our goal is to choose $T$ and $K$ so that $\Gamma$ is a contraction operator when restricted to $\Lambda_{[0,T], r, K}$. 

\begin{prop}
\label{prop:SSMVE-ExistenceLocalSolution} 
Let $\Gamma:\Lambda_{[0,T], r} \to \Lambda_{[0,T], r}$ be as defined in Definition \ref{dfn:Gamma-ContractionOp}. Then $\exists K_1, \varepsilon>0$ such that, 
$$
\Gamma\Big[ \Lambda_{[0, \varepsilon], r, K_1} \Big] \subset \Lambda_{[0,\varepsilon], r, K_1},
\qquad\textrm{and}\qquad
\forall g_1, g_2 \in \Lambda_{[0,\varepsilon], r, K_1}\quad 
\Big\| \Gamma[g_1] - \Gamma[ g_2] \Big\|_{[0,\varepsilon], r} \leq \frac{1}{2} \Big\| g_1 - g_2 \Big\|_{[0, \varepsilon], r}. 
$$
As such, there exists a unique solution to Equation \eqref{eq:reflectedSSMVE} on the interval $[0,\varepsilon]$. 
\end{prop}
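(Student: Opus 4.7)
The plan is to apply the Banach Fixed Point Theorem on the closed subset $\Lambda_{[0, \varepsilon], r, K_1}$ of the Banach space $\Lambda_{[0,T], r}$, for an appropriate compatible choice of $K_1$ and $\varepsilon$. Lemmas \ref{lem:Gamma-WellDefined} and \ref{lemma:Gamma-FirstContraction} already hand us the two bounds we need: the first gives control of $\|\Gamma[g]\|_{[0,\varepsilon], r}$ in terms of the moments of $X^{(g)}$ and an extra $\varepsilon^p \|g\|_{[0,\varepsilon],r}^p$ contribution that comes from $\int_0^\varepsilon \|g(s,x_0)\|ds \leq \varepsilon \|g\|_{[0,\varepsilon],r}$, while the second gives the Lipschitz-in-$g$ estimate with constant $K\sqrt{\varepsilon} e^{K\varepsilon}$.

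First, I would bound $\sup_{t \in [0,\varepsilon]} \bE[\|X^{(g)}_t - x_0\|^r]$. From the moment estimate \eqref{eq:reflected-SSMVE-b:moment} with $T_0 = 0$ and Jensen (using $p > 2r$), one obtains
\begin{equation*}
\sup_{t\in[0,\varepsilon]}\bE\big[\|X^{(g)}_t - x_0\|^r\big]
\leq \big(A_0 + C_1 \varepsilon^p \|g\|_{[0,\varepsilon],r}^p\big)^{r/p} e^{C_2 \varepsilon r/p},
\end{equation*}
where $A_0$ collects the terms depending only on $\theta$, $b(\cdot, x_0, \delta_{x_0})$ and $\sigma(\cdot, x_0, \delta_{x_0})$ (finite by Assumption \ref{ass:ExistUnique-LocLip-SSMVE}). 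Substituting into \eqref{eq:lem:Gamma-WellDefined-1}, I would then fix
\begin{equation*}
K_1 := 2(2C + 2^{r+1})\big(1 + (2A_0)^{r/p}\big),
\end{equation*}
and choose $\varepsilon_0 > 0$ small enough so that simultaneously $C_1 \varepsilon_0^p K_1^p \leq A_0$ and $e^{C_2 \varepsilon_0 r/p} \leq 2^{r/p}$. For any $g \in \Lambda_{[0,\varepsilon_0], r, K_1}$ this forces $\sup_{t\leq \varepsilon_0}\bE[\|X^{(g)}_t - x_0\|^r] \leq (2A_0)^{r/p}$ times a factor absorbed into $K_1$, giving $\|\Gamma[g]\|_{[0,\varepsilon_0], r} \leq K_1$, i.e.\ the self-mapping property.

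Next, for the contraction, I would invoke Lemma \ref{lemma:Gamma-FirstContraction} with $T_0 = 0$: its constant is the moment-dependent quantity $(C+2^r)3\sqrt{8}(1 + \sup_t \bE[\|X^{(g_1)}_t\|^{2r} + \|X^{(g_2)}_t\|^{2r}])e^{(4L^2+6L)\varepsilon}$. Because both $g_1, g_2 \in \Lambda_{[0,\varepsilon_0], r, K_1}$, the same argument as above (applied to the $2r$ moment, which is still $<p$) produces a uniform bound on this prefactor depending only on $A_0$, $K_1$, $L$, $r$, $p$. Call the resulting constant $K$; it is independent of $\varepsilon \in (0, \varepsilon_0]$. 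Shrinking $\varepsilon_0$ further if necessary so that $K\sqrt{\varepsilon_0}\, e^{K \varepsilon_0} \leq 1/2$, Lemma \ref{lemma:Gamma-FirstContraction} yields the claimed $\tfrac{1}{2}$-contraction.

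With both properties established, Banach's Fixed Point Theorem on the complete metric space $(\Lambda_{[0,\varepsilon_0], r, K_1}, \|\cdot\|_{[0,\varepsilon_0], r})$ produces a unique $g^{\ast}$; by construction the associated process $X^{(g^{\ast})}$ solves \eqref{eq:reflectedSSMVE} on $[0,\varepsilon_0]$. Uniqueness among \emph{all} solutions on $[0,\varepsilon_0]$ follows because any solution induces, via $g(t,x) := f \ast \mu_t(x)$, an element of $\Lambda_{[0,\varepsilon_0], r}$ that must lie in $\Lambda_{[0,\varepsilon_0], r, K_1}$ after possibly enlarging $K_1$ (the growth of $f$ and finite moments of the solution force the bound), hence coincides with $g^{\ast}$. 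The main obstacle is the circular coupling between $K_1$ and the moment bounds of $X^{(g)}$ that themselves involve $\|g\|$; the route around it is to pick $K_1$ first to dominate the $g$-independent part and then send $\varepsilon \to 0$ to render the $g$-dependent part negligible.
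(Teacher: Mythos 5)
Your proposal follows the same route as the paper's proof: choose $K_1$ from the $g$-independent part of the bound \eqref{eq:lem:Gamma-WellDefined-2} (with $T_0=0$), shrink $\varepsilon$ so the $g$-dependent contributions and the contraction factor $K\sqrt{\varepsilon}e^{K\varepsilon}$ from Lemma \ref{lemma:Gamma-FirstContraction} become small, and conclude by Banach's fixed point theorem on the complete metric space $\Lambda_{[0,\varepsilon],r,K_1}$. The only differences are cosmetic — you make explicit the Jensen step to pass from $p$-th to $r$-th (and $2r$-th) moments and you add a short remark on why the constructed fixed point is unique among all solutions, both of which the paper leaves implicit.
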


\begin{proof}

Let $\varepsilon>0$. Let $g\in \Lambda_{[0,\varepsilon], r, K_1}$. Taking Equation \eqref{eq:lem:Gamma-WellDefined-2} with $T_0=0$ provides

\begin{align*}
&\Big\| \Gamma[g] \Big\|_{[0, \varepsilon], r}
\\
&\leq \Big(2C+2^{r+1}\Big)\Big( 1+ \bE\Big[ |\theta - x_0|^r \Big] \Big)
+ \Bigg( \big( 4 (p-1)\big)^{p-1} \bigg( \Big( \int_{0}^{\varepsilon} | b(s, x_0, \delta_{x_0} )| ds\Big)^p + \Big( \varepsilon K_1 \Big)^p \bigg)
\\
&\quad + 2(p-1)^{p/2} \cdot (p-2)^{(p-2)/2} \cdot 4^{p/2} \Big( \int_{0}^{\varepsilon}  |\sigma(s, x_0, \delta_{x_0} ) |^2 ds \Big)^{\tfrac{p}{2}} \Bigg) 
\cdot \exp\Big( \big( 4pL + 2p(p-1)L^2  \big) \varepsilon \Big). 
\end{align*}
Choose $K_1= 2(2C + 2^{r+1}) \Big( 1+ \bE\Big[ \| \theta - x_0 \|^p \Big]\Big)$. We have the limit

$$
\lim_{\varepsilon \to 0} \Big( \int_{0}^{\varepsilon} \| b(s, x_0, \delta_{x_0} ) \| ds\Big)^p + \Big( \int_{0}^{\varepsilon} \| \sigma(s, x_0, \delta_{x_0} ) \|^2 ds \Big)^{\tfrac{p}{2}} = 0. 
$$
Then we can choose $\varepsilon'>0$ such that $
\big\| \Gamma[g] \big\|_{[0, \varepsilon'], r}< K_1. 
$

Secondly, using Equation \eqref{eq:prop:Gamma-FirstContraction} we choose $\varepsilon''>0$ such that
$$
\Big\| \Gamma[g_1] - \Gamma[g_2] \Big\|_{[0,\varepsilon''], r} < \frac{\| g_1 - g_2\|_{[0,\varepsilon''],r}}{2}. 
$$
We emphasise that the choice of $\varepsilon = \min\{ \varepsilon', \varepsilon''\}$ is dependent on the choice of $K_1$.

Define $d:\Lambda_{[0,\varepsilon], r} \times \Lambda_{[0,\varepsilon], r} \to \bR^+$ to be the metric $d(g_1, g_2) = \| g_1 - g_2\|_{[0,\varepsilon], r}$. The metric space $(\Lambda_{[0,\varepsilon], r, K_1}, d)$ is non-empty, complete and $\Gamma:\Lambda_{[0,\varepsilon], r, K_1} \to \Lambda_{[0,\varepsilon], r, K_1}$ is a contraction operator. Therefore, $\exists g'\in \Lambda_{[0,\varepsilon], r, K_1}$ such that $\Gamma[g'] = g'$. Thus $\forall t\in [0,\varepsilon]$, 
$$
g'\Big(t, X_t^{(g')} \Big) =  f\ast \mu_t^{(g')} (X_t^{(g')}). 
$$
Substituting this into \eqref{eq:reflected-SSMVE-b}, we obtain \eqref{eq:reflectedSSMVE}. Thus a solution to \eqref{eq:reflectedSSMVE} exists in $\cS^p([0,\varepsilon])$. 
\end{proof}
Our challenge now is to find a solution over the whole interval $[0,T]$. 

\begin{prop} 
\label{prop:SSMVE-Moments}
Let $\cD$ satisfy Assumption \ref{assumption:domain}. Let $r>1$ and $p>2r$. Let $W$ be a $d'$ dimensional Brownian motion. Let $b$, $\sigma$ and $f$ satisfy Assumption \ref{ass:ExistUnique-LocLip-SSMVE}. Suppose that a solution $X$ to the McKean-Vlasov equation \eqref{eq:reflectedSSMVE} exists in $\cS^p([0,T_0])$ for some $0<T_0<T$. Then there exists a constant $K_2=K_2(p, T)$ such that
$$
\bigg(\sup_{t\in[0,T_0]} \bE\Big[ \| X_t - x_0 \|^p \Big]\bigg) \vee  \bigg(\bE\Big[ \| X - x_0\|_{\infty, [0,T_0]}^p \Big]\bigg)< K_2. 
$$
\end{prop}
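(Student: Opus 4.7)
The plan is to apply It\^o's formula to $\|X_t - x_0\|^p$ (mollified as $(\varepsilon + \|X_t - x_0\|^2)^{p/2}$ with $\varepsilon \downarrow 0$) and derive a Gr\"onwall inequality for $m_p(t) := \bE\bigl[\|X_t - x_0\|^p\bigr]$. Localisation via $\tau_n := \inf\{t \geq 0 : \|X_t - x_0\| > n\}$ ensures that stochastic integrals are true martingales and justifies all manipulations; the a priori $\cS^p([0, T_0])$ assumption on $X$ allows passage to the limit $n \to \infty$. The reflection contribution $-p\|X_t - x_0\|^{p-2}\langle X_t - x_0, dk_t\rangle$ is non-positive because $dk_t$ is supported on $\{X_t \in \partial\cD\}$ and points along $\n(X_t)$: Lemma \ref{lem:NormalToDomain} applied at the interior point $x_0 \in \cD$ gives $\langle \n(X_t), x_0 - X_t\rangle \leq 0$, hence $\langle X_t - x_0, \n(X_t)\rangle \geq 0$.

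The contributions of $b$ and of the It\^o correction from $\sigma$ are controlled by Assumption \ref{ass:ExistUnique-LocLip-SSMVE}: the one-sided Lipschitz property of $b(s,\cdot,\mu)$ in space, the Lipschitz-in-measure bounds on both $b$ and $\sigma$, and the Lipschitz bound on $\sigma$ in space combine with $\bW^{(2)}_\cD(\mu_t, \delta_{x_0})^2 \leq \bE[\|X_t - x_0\|^2]$ and Young's inequality to yield a bound of the form
\[
p\|X_t-x_0\|^{p-2}\bigl\langle X_t - x_0,\, b(t, X_t, \mu_t)\bigr\rangle + \tfrac{p(p-1)}{2}\|X_t-x_0\|^{p-2}\|\sigma(t, X_t, \mu_t)\|^2 \;\leq\; C_1\bigl(1 + \|X_t - x_0\|^p\bigr) + \varphi(t),
\]
with $\varphi \in L^1([0, T])$ determined by $\|b(t, x_0, \delta_{x_0})\|$ and $\|\sigma(t, x_0, \delta_{x_0})\|$.

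The decisive step is handling the self-stabilising contribution $p\|X_t - x_0\|^{p-2}\langle X_t - x_0, f\ast\mu_t(X_t)\rangle$. Introducing an independent copy $\tilde X_t$ of $X_t$, we have $f\ast\mu_t(X_t) = \bE[\,f(X_t - \tilde X_t)\mid X_t\,]$ and, using the oddness $f(-z) = -f(z)$ together with the exchangeability of $X_t, \tilde X_t$,
\[
\bE\bigl[\|X_t - x_0\|^{p-2}\langle X_t - x_0, f\ast\mu_t(X_t)\rangle\bigr] \;=\; \tfrac{1}{2}\bE\bigl[\langle G_t,\, f(X_t - \tilde X_t)\rangle\bigr],
\]
where $G_t := \|U\|^{p-2}U - \|V\|^{p-2}V$, $U := X_t - x_0$, $V := \tilde X_t - x_0$. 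Crucially, $U - V$ coincides with the argument of $f$. Writing $G_t = \int_0^1 M_\lambda(U - V)\,d\lambda$ with the positive semidefinite matrix $M_\lambda := \|\xi_\lambda\|^{p-2}I + (p-2)\|\xi_\lambda\|^{p-4}\xi_\lambda\xi_\lambda^\top$ and $\xi_\lambda := V + \lambda(U - V)$, the \emph{radial} piece $\|\xi_\lambda\|^{p-2}\langle U - V, f(U-V)\rangle$ is directly dominated by $L\|\xi_\lambda\|^{p-2}\|U - V\|^2$ through the one-sided Lipschitz condition, while the \emph{shear} piece $(p-2)\|\xi_\lambda\|^{p-4}\langle\xi_\lambda, U-V\rangle\langle\xi_\lambda, f(U-V)\rangle$ is estimated via Cauchy-Schwarz and the polynomial growth $\|f(z)\| \leq C(1 + \|z\|^r)$. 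Bounding $\int_0^1 \|\xi_\lambda\|^{p-2}d\lambda \lesssim \|U\|^{p-2} + \|V\|^{p-2}$ and exploiting that $U,V$ are i.i.d., the residual moments of order up to $p + r - 1$ are reabsorbed into $m_p(t)$ by Young's inequality after invoking the Lyapunov inequality $\bE[\|U\|^q]^{1/q} \leq \bE[\|U\|^p]^{1/p}$ (applied to the auxiliary exponent $q = 2r$, which requires precisely $p > 2r$), in conjunction with a preliminary $p = 2$ moment bound obtained by the same scheme but where the shear term is absent and the symmetrisation closes cleanly.

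Collecting all contributions gives $\tfrac{d}{dt}m_p(t) \leq C_1(1 + m_p(t)) + \varphi(t)$, and Gr\"onwall's inequality delivers $\sup_{t\in[0, T_0]} m_p(t) \leq K_2'(p, T)$, uniformly in $T_0 \in (0, T)$. To upgrade from pointwise-in-$t$ moments to the supremum $\bE[\|X - x_0\|^p_{\infty, [0, T_0]}]$, we apply the Burkholder-Davis-Gundy inequality to the stochastic integral in the It\^o expansion and conclude with a last Gr\"onwall step. The principal obstacle is the shear term in the symmetrised self-stabilising contribution: extracting its dissipative effect genuinely relies on the odd symmetry of $f$ combined with exchangeability, while the residual polynomial-growth remainder of order $p + r - 1$ is reabsorbable into $m_p$ only thanks to the structural hypothesis $p > 2r$, which is the sharp numerical threshold under which the Lyapunov interpolation closes.
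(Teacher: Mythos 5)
Your reduction of the self-stabilising term to $\tfrac12\bE\bigl[\langle G_t, f(U-V)\rangle\bigr]$ with $G_t = \|U\|^{p-2}U - \|V\|^{p-2}V$, and the subsequent integral representation $G_t = \int_0^1 M_\lambda(U-V)\,d\lambda$, are both correct, and the radial piece is indeed killed by the one-sided Lipschitz property of $f$. But the shear piece does not close. After Cauchy--Schwarz and the polynomial bound $\|f(U-V)\|\leq C(1+\|U-V\|^r)$, the shear contribution is of size $\bE\bigl[(\|U\|^{p-2}+\|V\|^{p-2})\|U-V\|^{r+1}\bigr]$; expanding $\|U-V\|^{r+1}\lesssim \|U\|^{r+1}+\|V\|^{r+1}$ and using independence of $U,V$, you are left with a term $\bE[\|U\|^{p+r-1}]$. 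Since $r>1$, this is a moment of order \emph{strictly larger} than $p$, and Lyapunov's inequality $\bE[\|U\|^q]^{1/q}\leq \bE[\|U\|^p]^{1/p}$ only goes the wrong way: it bounds lower-order moments by higher ones, not the reverse. Young's inequality cannot fix this either, because you cannot write $\|U\|^{p+r-1}$ as a product of factors each of order $\leq p$ with conjugate H\"older exponents (doing so forces $r\leq 1$). The hypothesis $p>2r$ is what makes the well-posedness fixed-point work; it does not make the higher-order residual absorbable. So the Gr\"onwall inequality $\tfrac{d}{dt}m_p(t)\leq C(1+m_p(t))+\varphi(t)$ never materialises, and the argument as written breaks at its ``decisive step''.

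This is exactly the obstruction the paper flags (``the symmetry trick for establishing second moments does not hold for higher moments''), and the paper's proof deliberately avoids ever applying It\^o directly to $\|X_t-x_0\|^p$ for $p>2$. Instead it applies It\^o to $\|X_t-\tilde X_t\|^{2m}$ --- where the $f$ contribution carries the prefactor $\|X_s-\tilde X_s\|^{2m-2}\langle X_s-\tilde X_s, \bar\bE[f(X_s-\bar X_s)-f(\tilde X_s-\bar X_s)]\rangle$, and the one-sided Lipschitz condition applies with the \emph{same} difference $X_s-\tilde X_s$, so there is no shear term at all --- and then relates $\bE[\|X_t-x_0\|^{2m}]$ to $\bE[\|X_t-\tilde X_t\|^{2m}]$ and the lower centred moment $\bE[\|Y_t\|^{2m-2}]$ via the binomial-theorem identity borrowed from \cite{HIP}*{Proposition 2.12}. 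The induction on $m$, running the Gr\"onwall argument jointly on the sum $\bE[\|X_t-x_0\|^{2m}]+\bE[\|X_t-\tilde X_t\|^{2m}]$, is what replaces the direct estimate you were aiming for. To repair your argument you would need either to reintroduce the HIP centred-moment device, or to find a genuinely new way to control the shear term; the latter appears impossible for general (non-radial) odd $f$ with only a one-sided Lipschitz bound.
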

The challenge of this proof is that the symmetry trick for establishing second moments (see Equation \eqref{eq:prop:SSMVE-Moments1.0}) does not hold for higher moments. However, if we try to bypass this using the methods of \cite{HIP}, the non-constant diffusion terms yields integrals that blow up. Arguing by induction on $m$, we fix this by considering 
$$
\sup_{t\in[0,T]} \bE\Big[ \| X_t - x_0\|^{2m} \Big] + \bE\Big[ \| X_t - \tilde{X}_t\|^{2m} \Big], 
$$
and demonstrating via a Gr\"onwall argument that this is finite, even though a similar argument would not work for either of these terms on their own. 
\begin{proof}
Suppose that $t\in[0,T_0]$. Let $(X_t, k_t)$, $(\tilde{X_t}, \tilde{k_t})$ and $(\overline{X_t}, \overline{k_t})$ be independent, identically distributed solutions of Equation \eqref{eq:reflectedSSMVE}. 

Consider the two processes
\begin{align*}
\| X_t - x_0 \|^2 = \| \theta &- x_0 \|^2 
+ 2\int_0^t \Big\langle X_s - x_0, b(s, X_s, \mu_s) \Big\rangle ds
+ 2\int_0^t \Big\langle X_s - x_0, \sigma(s, X_s, \mu_s) dW_s \Big\rangle 
\\
&+ \int_0^t \Big\| \sigma(s, X_s, \mu_s) \Big\|^2 ds
+ 2\int_0^t \Big\langle X_s - x_0, \overline{\bE}\Big[ f(X_s - \overline{X_s}) \Big] \Big\rangle ds
- 2\int_0^t \Big\langle X_s - x_0, dk_s \Big\rangle, 
\\
\| X_t - \tilde{X_t} \|^2 
= \| \theta &- \tilde{\theta} \|^2 + 2\int_0^t \Big\langle X_s - \tilde{X_s}, b(s, X_s, \mu_s) - b(s, \tilde{X_s}, \mu_s) \Big\rangle ds
\\
&+ 2\int_0^t \Big\langle X_s - \tilde{X_s}, \sigma(s, X_s, \mu_s)dW_s - \sigma(s, \tilde{X_s}, \mu_s)d\tilde{W}_s \Big\rangle
\\
&+ \int_0^t \Big\| \sigma(s, X_s, \mu_s) \Big\|^2 + \Big\| \sigma(s, \tilde{X_s}, \mu_s) \Big\|^2 ds
\\
&+ 2\int_0^t \Big\langle X_s - \tilde{X_s}, \overline{\bE}\Big[ f(X_s - \overline{X_s}) - f(\tilde{X_s} - \overline{X_s}) \Big] \Big\rangle ds 
- 2\int_0^t \Big\langle X_s - \tilde{X_s}, dk_s - d\tilde{k_s} \Big\rangle. 
\end{align*}

We remark that since $f$ is symmetric we have the identity

\begin{equation}
\label{eq:prop:SSMVE-Moments1.0}
\bE\Big[ \Big\langle X_s - x_0, \overline{\bE}\Big[ f( X_s - \overline{X_s}) \Big] \Big\rangle \Big] 
\leq L\cdot \bE\Big[ \overline{\bE}\Big[ \| X_s - \overline{X_s}\|^2 \Big]\Big].
\end{equation}

Taking expectations of both processes (and no longer distinguishing between the integral operators $\bE$ and $\tilde{\bE}$) and adding them together, we get
\begin{align*}
\bE\Big[ \| X_t - x_0\|^2 + \| X_t - \tilde{X_t}\|^2 \Big] \leq& \bE\Big[ \|\theta - x_0\|^2\Big] + \bE\Big[ \|\theta - \tilde{\theta}\|^2 \Big]
\\
&+ (4L+12L^2) \int_0^t \bE\Big[ \|X_s - x_0\|^2 \Big] ds + 2\int_0^t \bE\Big[ \|X_s - x_0\|\Big] \cdot \|b(s, x_0, \delta_{x_0})\| ds
\\
&
+ 6\int_0^t \| \sigma(s, x_0, \delta_{x_0})\|^2 ds
+ 6L \int_0^t \bE\Big[ \|X_s - \tilde{X_s}\|^2 \Big] ds.
\end{align*}
Taking a supremum over $t\in[0,T_0]$, then applying Young's inequality followed by Gr\"onwall's inequality, we obtain
\begin{align*}
\sup_{t\in[0,T_0]} \bE\Big[ \| X_t - x_0 \|^2 + \| X_t - \tilde{X_t} \|^2 \Big] \leq&2\Bigg( \bE\Big[ \| \theta - x_0 \|^2 \Big] + \bE\Big[ \| \theta - \tilde{\theta} \|^2 \Big] 
\\
&+ \Big( \int_0^T \Big\| b(s, x_0, \delta_{x_0}) \Big\| ds\Big)^2 + \int_0^T \Big\| \sigma(s, x_0, \delta_{x_0})\Big\|^2 ds \Bigg) e^{(4L + 12L^2) T}.
\end{align*}
We proceed via induction. Let
$$
Y_t = X_t - \bE[ X_t]
$$
be the centred process. Then
\begin{equation}
\label{eq:prop:SSMVE-Moments1.1}
\bE\Big[ \| X_t - x_0 \|^{2m} \Big] \leq 2^{2m-1} \Big( \bE\Big[ \| X_t - x_0 \|^2\Big]^{m} + \bE\Big[ \| Y_t \|^{2m}\Big] \Big) . 
\end{equation}
Let $\xi$ and $\tilde{\xi}$ be independent copies of a scalar random variable with mean $0$. Then by the Binomial Theorem, we have that for $m\in \bN$, 
\begin{align*}
\bE\Big[ ( \xi - \tilde{\xi})^{2m} \Big] 
=
& \sum_{k=0}^{2m} (-1)^k \binom{2m}{k} 
\bE\Big[ \xi^k\Big] \bE\Big[ \xi^{2m - k}\Big], 
\end{align*}
and therefore from \cite{HIP}*{Proposition 2.12}
\begin{equation}\label{eq:prop:SSMVE-Moments1.2}
2\bE\Big[ \| Y_t \|^{2m} \Big] \leq c(m,d) \Big( \bE\Big[ \| X_t - \tilde{X}_t\|^{2m}\Big] +   \Big( 1 +  \bE\Big[ \| Y_t\|^{2m-2} \Big] \Big)^2 \Big),
\end{equation}
for a constant $c(m,d)$ depending only on $m$ and $d$. In what follows we write $c(m,d,L)$ for a constant possibly changing on each line, but dependent only on $m,d$ and Lipshitz constant $L$. We combine Equations \eqref{eq:prop:SSMVE-Moments1.1} and Equation \eqref{eq:prop:SSMVE-Moments1.2} to get
\begin{align}
\nonumber
 \bE\Big[ \| X_t &- x_0 \|^{2m} \Big] + \bE\Big[ \| X_t - \tilde{X_t} \|^{2m} \Big]
\\
&
\leq c(m,d,L) \Big(  \bE\Big[ \| X_t - x_0 \|^2\Big]^m + \Big( 1+ \bE\Big[ \| Y_t \|^{2m-2}\Big] \Big)^2 \Big) 
\label{eq:prop:SSMVE-Moments2.1}
+ c(m,d,L)\bE\Big[ \| X_t - \tilde{X_t} \|^{2m} \Big]. 
\end{align}
We use It\^o's formula to get that
\begin{align*}
\| X_t - \tilde{X}_t\|^{2m} =& \| \theta - \tilde{\theta}\|^{2m} + 2m \int_0^t \| X_s - \tilde{X}_s\|^{2m-2} \Big\langle X_s-\tilde{X}_s , b(s, X_s, \mu_s) - b(s, \tilde{X}_s, \mu_s) \Big\rangle ds
\\
&+ 2m \int_0^t \| X_s - \tilde{X}_s\|^{2m-2} \Big\langle X_s - \tilde{X}_s, \overline{\bE}\Big[ f( X_s - \overline{X}_s) - f(\tilde{X_s} - \overline{X}_s) \Big] \Big\rangle ds
\\
&+ 2m \int_0^t \| X_s - \tilde{X}_s\|^{2m-2} \Big\langle X_s - \tilde{X}_s, \sigma(s, X_s, \mu_s) dW_s - \sigma(s, \tilde{X}_s, \mu_s) d\tilde{W}_s \Big\rangle
\\
+m(2m-1)& \int_0^t \| X_s - \tilde{X}_s\|^{2m-2} \Big( \| \sigma(s, X_s, \mu_s) \|^2 + \| \sigma(s, \tilde{X}_s, \mu_s) \|^2 \Big) ds
- 2m\int_0^t \Big\langle X_s - \tilde{X}_s, dk_s - d\tilde{k}_s \Big\rangle,
\end{align*}

Now for any $K>0$, 
\begin{align*}
K& \sup_{t\in[0,T]} \bE\Bigg[ \int_0^t \| X_s - \tilde{X}_s \|^{2m-2} \Big( \| \sigma(s, X_s, \mu_s) \|^2 + \| \sigma(s, \tilde{X}_s, \mu_s) \|^2 \Big) ds \Bigg]
\\
\leq& 12L^2 K \int_0^T \bE\Big[ \| X_s - \tilde{X}_s \|^{2m} \Big] ds + \tfrac{12L^2K}{m} \int_0^T \bE\Big[ \| X_s - x_0\|^{2m} \Big] ds
\\
&+\sup_{t\in[0,T]} \frac{ \bE\Big[ \| X_t - \tilde{X}_t\|^{2m} \Big] }{2} + \big[ 2(m-1)\big]^{m-1} \cdot \Big[ \tfrac{6K}{m} \Big]^{m} \cdot \Big( \int_0^T \Big| \sigma(s, x_0, \delta_{x_0}) \Big|^2 ds \Big)^m.
\end{align*}

Applying this with Equation \eqref{eq:prop:SSMVE-Moments2.1} yields
\begin{align*}
&\sup_{t\in[0,T]} \bE\Big[ \| X_t - x_0 \|^{2m} \Big] + \bE\Big[ \| X_t - \tilde{X_t} \|^{2m} \Big]
\\
&\leq 
c(m,d,L)\Bigg(  \bE\Big[ \| X_t - x_0 \|^2\Big]^m +  \Big( 1+ \bE\Big[ \| Y_t \|^{2m-2}\Big] \Big)^2 
+  \bE\Big[ \| \theta - \tilde{\theta}\|^{2m} \Big] +\Big( \int_0^T \| \sigma(s, x_0, \delta_{x_0}) \|^2 ds \Big)^m
\\
&+   \int_0^T \sup_{s\in[0,t]} \bE\Big[ \| X_s - \tilde{X}_s \|^{2m} \Big] +  \bE\Big[ \| X_s - x_0\|^{2m} \Big] dt \Bigg)
+   \frac{1}{2}\sup_{t\in[0,T]} \bE\Big[ \| X_t - \tilde{X}_t\|^{2m}\Big] .
\end{align*}

Combining all terms together, we get that there exist a constant $c=c(m,d,L,T)$, dependent only on $m, d, L,T$ and not $T_0$ such that
\begin{align*}
\sup_{t\in[0,T_0]} \bE\Big[ \| X_t - x_0 \|^{2m} + \| X_t - \tilde{X_t} \|^{2m} \Big] 
\leq&
c\Bigg(1+ \int_0^{T_0} \sup_{s\in[0,t]} \bE\Big[ \| X_s - x_0 \|^{2m} + \| X_s - \tilde{X_s} \|^{2m} \Big] dt \Bigg).
\end{align*} 
Thus via Gr\"onwall
$$
\sup_{t\in[0,T_0]} \bE\Big[ \| X_t - x_0 \|^{2m} + \| X_t - \tilde{X_t} \|^{2m} \Big] 
\leq c e^{c T_0}< c e^{c T}.
$$
Hence, by induction we have finite moment estimates for all $m\in \bN$ such that $2m\leq p$. In particular, this is true for $2m\geq 2r$. For sharp moment estimates, we use the methods from the proof of Theorem \ref{thm:ExistUnique-LocLip-Ref} to get
\begin{align}
\bE\Big[ \| X - x_0 \|_{\infty, [0,T_0]}^p \Big] 
\lesssim&
\bE\Big[ \| \theta - x_0 \|^p\Big] + \Big(\int_0^{T_0} \| b(s, x_0, \delta_{x_0}) \| ds \Big)^p
\nonumber
\\
&+ \Big( \int_0^{T_0} \| \sigma(s, x_0, \delta_{x_0}) \|^2 ds\Big)^{p/2} + \Big( \int_0^{T_0} \Big\| \tilde{\bE}\Big[ f(\tilde{X_s} - x_0) \Big] \Big\| ds \Big)^p
\nonumber
\\
\lesssim& \bE\Big[ \| \theta - x_0 \|^p \Big] + \Big(\int_0^{T} \| b(s, x_0, \delta_{x_0}) \| ds \Big)^p
\nonumber
\\
&+ \Big( \int_0^{T} \| \sigma(s, x_0, \delta_{x_0}) \|^2 ds\Big)^{p/2} + \Big( T C \sup_{t\in[0,T_0]} \bE\Big[ \| X_t - x_0 \|^r + 1 \Big] \Big)^p. \label{eq momoment bounds for existence uniqueness}
\end{align}
\end{proof}

Finally, we are in position to prove Theorem \ref{thm:ExistUnique-LocLip-SSMVE}. 

\begin{proof}[Proof of Theorem \ref{thm:ExistUnique-LocLip-SSMVE}.]

By Proposition \ref{prop:SSMVE-ExistenceLocalSolution}, we have that a unique solution to Equation \eqref{eq:reflectedSSMVE} exists on the interval $[0,\varepsilon]$. Let $\delta>0$ and $g\in \Lambda_{[\varepsilon, \varepsilon + \delta],r}$. Then again by \eqref{eq:lem:Gamma-WellDefined-2}

\begin{align*}
\Big\| \Gamma[g] \Big\|_{[\varepsilon, \varepsilon+\delta], r}
\leq& \Big(2C+2^{r+1}\Big)\Big( 1+ \sup_{t\in[0,\varepsilon]} \bE\Big[ \| X_t - x_0 \|^r \Big] \Big)
\\
&+ \Bigg( \big( 4 (p-1)\big)^{p-1} \bigg( \Big( \int_{\varepsilon}^{\varepsilon+\delta} \| b(s, x_0, \delta_{x_0} )\| ds\Big)^p + \Big( \delta \|g\|_{[\varepsilon, \varepsilon+\delta],r} \Big)^p \bigg)
\\
&\quad + 2(p-1)^{p/2} \cdot (p-2)^{(p-2)/2} \cdot 4^{p/2} \Big( \int_{\varepsilon}^{\varepsilon+\delta} \| \sigma(s, x_0, \delta_{x_0} ) \|^2 ds \Big)^{\tfrac{p}{2}} \Bigg) 
\\
&\quad \cdot \exp\Big( \big( 4pL + 2p(p-1)L^2  \big) \delta \Big). 
\end{align*}

By Proposition \ref{prop:SSMVE-Moments}, we know that
$$
2\Big(2C+2^{r+1}\Big)\Big( 1+ \sup_{t\in[0,\varepsilon]} \bE\Big[ \| X_t - x_0 \|^r \Big] \Big)< K_5,
$$
for some $K_5$ independent of $\varepsilon$. Then for $\| g\|_{[\varepsilon, \varepsilon+\delta],r}<K_5$, we get
\begin{align*}
\Big\| \Gamma[g] \Big\|_{[\varepsilon, \varepsilon+\delta], r}
\leq& \tfrac{K_5}{2} + \Bigg( \big( 4 (p-1)\big)^{p-1} \bigg( \Big( \int_{\varepsilon}^{\varepsilon+\delta} \| b(s, x_0, \delta_{x_0} )\| ds\Big)^p + \big( \delta K_5 \big)^p \bigg)
\\
&\quad + 2(p-1)^{p/2} \cdot (p-2)^{(p-2)/2} \cdot 4^{p/2} \Big( \int_{\varepsilon}^{\varepsilon+\delta}  \| \sigma(s, x_0, \delta_{x_0} ) \|^2 ds \Big)^{\tfrac{p}{2}} \Bigg) 
\\
&\quad \cdot \exp\Big( \big( 4pL + 2p(p-1)L^2  \big) \delta \Big). 
\end{align*}

By the uniform continuity of the mappings
\begin{align*}
\delta \mapsto& \int_{\varepsilon}^{\varepsilon+\delta} \| b(s, x_0, \delta_{x_0} )\| ds 
\quad \mbox{and}\quad
\delta \mapsto \int_{\varepsilon}^{\varepsilon+\delta} \| \sigma(s, x_0, \delta_{x_0} )\|^2 ds, 
\end{align*}
we choose $\delta'>0$ (independently of $\varepsilon$) so that $\big\|\, \Gamma[g]\, \big\|_{[\varepsilon, \varepsilon+\delta'], r}<K_5$. Next, we use Equation \eqref{eq:prop:Gamma-FirstContraction} to get

\begin{align*}
\Big\|& \Gamma[g_1] - \Gamma[g_2] \Big\|_{[\varepsilon,\varepsilon+\delta], r} 
\\
&\leq (C+2^r) 3\sqrt{8} \| g_1 - g_2\|_{[\varepsilon,\varepsilon+\delta], r} \sqrt{\delta} e^{(4L^2+6L)\delta} \Bigg( 1+ \sup_{t\in [\varepsilon, \varepsilon+\delta]} \bE\Big[ \| X_t^{(g_1)} - x_0 \|^{2r} + \|X_t^{(g_2)} - x_0 \|^{2r}\Big] \Bigg) . 
\end{align*}

Next, using Equation \eqref{eq:reflected-SSMVE-b:moment}, we get

\begin{align*}
\Big\|
\Gamma[g_1] - \Gamma[g_2] \Big\|_{[\varepsilon,\varepsilon+\delta], r} 
&\leq (C+2^r) 3\sqrt{8} \| g_1 - g_2\|_{[\varepsilon,\varepsilon+\delta], r} \sqrt{\delta} e^{(4L^2+6L)\delta} \Bigg( 1+ 8\sup_{t\in[0,\varepsilon]} \bE\Big[ | X_{t} - x_0|^{2r} \Big]
\\
&\quad +2\big( 4 (2r-1)\big)^{2r-1} \bigg( \Big( \int_{\varepsilon}^{\varepsilon+\delta} | b(s, x_0, \delta_{x_0} )| ds \Big)^{2r} + \big( \delta K_5 \big)^{2r} \bigg)
\\
&\quad +4(2r-1)^{r} \cdot (2r-2)^{r-1} \cdot 4^{r} \Big( \int_{\varepsilon}^{\varepsilon+\delta}  |\sigma(s, x_0, \delta_{x_0} ) |^2 ds \Big)^{r}
\Bigg) e^{\big(8rL + 4r(2r-1)L^2 \big) \delta}.
\end{align*}

Finally, by Proposition \ref{prop:SSMVE-Moments}, we choose $\delta''>0$ (independently of $\varepsilon$) such that
$$
\Big\| \Gamma[g_1] - \Gamma[g_2] \Big\|_{[\varepsilon,\varepsilon+\delta''], r} 
\leq
\frac{1}{2} \| g_1 - g_2\|_{[\varepsilon,\varepsilon+\delta''], r}. 
$$
Let $\delta = \min\{ \delta', \delta''\}$. 

Define $d:\Lambda_{[\varepsilon,\varepsilon + \delta], r} \times \Lambda_{[\varepsilon,\varepsilon + \delta], r} \to \bR^+$ be the metric $d(g_1, g_2) = \| g_1 - g_2\|_{[\varepsilon,\varepsilon + \delta], r}$. The metric space $(\Lambda_{[\varepsilon,\varepsilon + \delta], r, K_3}, d)$ is non-empty, complete and $\Gamma:\Lambda_{[\varepsilon,\varepsilon + \delta], r, K_3} \to \Lambda_{[\varepsilon,\varepsilon + \delta], r, K_3}$ is a contraction operator. Therefore, $\exists g'\in \Lambda_{[\varepsilon,\varepsilon + \delta], r, K_3}$ such that $\Gamma[g'] = g'$.

Thus $\forall t\in [\varepsilon,\varepsilon + \delta]$, 
$$
g'\big(t, X_t^{(g')} \big) = f\ast \mu_t^{(g')} \big(X_t^{(g')}\big).
$$
Repeating this argument and concatenating, we obtain a function $g\in \Lambda_{[0,T], r}$ such that $\forall t\in[0,T]$
$$
g\big(t, X_t^{(g)} \big) = f\ast \mu_t^{(g)} \big(X_t^{(g)}\big). 
$$
Substituting this into Equation \eqref{eq:reflected-SSMVE-b}, we obtain Equation \eqref{eq:reflectedSSMVE} over the interval $[0,T]$.
\end{proof}

\subsection{Propagation of chaos}
\label{sec:PoC}

We are interested in the ways in which the dynamics of a single equation within a system of reflected interacting equations of the form  \eqref{eq:ParticleSystem} converges to the  dynamics of the reflected McKean-Vlasov equation.  

Let $N\in \bN$ and let $i\in\{1, ..., N\}$. We now study the law of a solution to the interacting particle system
\begin{equation}
\label{eq:reflectedSSPS}
\begin{split}
X_t^{i, N} =& \theta^{i} + \int_0^t b(s, X_s^{i, N}, \mu_s^N) ds + \int_0^t \sigma(s, X_s^{i, N}, \mu_s^N) dW_s^{i, N} + \int_0^t f \ast \mu_s^N (X_s^{i, N}) ds - k_t^{i, N}, 
\\
|k^{i, N}|_t=& \int_0^t \1_{\partial D}(X_s^{i, N}) d|k^{i, N}|_s, 
\qquad k_t^{i, N} = \int_0^t \1_{\partial \cD}(X_s^{i, N}) \n(X_s^{i, N}) d|k^{i, N}|_s, 
\qquad \mu_t^N = \tfrac{1}{N} \sum_{j=1}^N \delta_{X_t^{j, N}}.
\end{split}
\end{equation}

We demonstrate Propagation of Chaos (PoC), that is for a finite time interval $[0,T]$ the trajectories of the particle system on average converge to that of the McKean-Vlasov equation. 

\begin{theorem}[Propagation of Chaos (PoC)] 
\label{thm:PoC}
Let $\cD\subset \bR^d$ satisfy Assumption \ref{assumption:domain}. \textcolor{black}{ Let $\theta^i$ be independent identically distributed copies of $\theta$, and let $\theta$, $b$, $\sigma$ and $f$ satisfy Assumption \ref{ass:ExistUnique-LocLip-SSMVE}. Let $W^{i, N}$ be a sequence of independent Brownian motions taking values on $\bR^{d'}$. Additionally, suppose that $p>\max \{ 2r, 4\}$.  Let $X^i_t$ be a sequence of strong solutions to Equation \eqref{eq:reflectedSSMVE} driven by the Brownian motion $W^{i,N}$, and with initial conditions $\theta^i$}. Let $X_t^{i, N}$ be the solution to particle system \eqref{eq:reflectedSSPS}.

Then there exists a constant $c=c(T)>0$, depending only on $T$, such that
\begin{equation}
\sup_{t\in[0,T]} \bE\Big[ \| X^{i,N}_t - X^i_t \|^2\Big] \leq c(T) \begin{cases} 
N^{-1/2},~&d<4,
\\
N^{-1/2}\log N,~&d=4,
\\
N^{\frac{-2}{d+4}},~&d>4.
\end{cases}
\end{equation}
\end{theorem}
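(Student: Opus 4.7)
The plan is to couple each particle $X^{i,N}$ with the McKean--Vlasov solution $X^i$ driven by the same Brownian motion $W^{i,N}$ and starting from the same initial condition $\theta^i$, and then estimate the squared distance via It\^o's formula. First I would apply It\^o to $\| X^{i,N}_t - X^i_t\|^2$, take expectations and use that the reflection contribution $-2\int_0^t \langle X^{i,N}_s - X^i_s, dk^{i,N}_s - dk^i_s\rangle$ is non-positive by the convexity estimate of Lemma~\ref{lem:NormalToDomain}. The drift and diffusion contributions are controlled by the one-sided and $\bW^{(2)}_\cD$-Lipschitz conditions on $b$ and $\sigma$ combined with the Burkholder--Davis--Gundy inequality, exactly as in the proof of Theorem~\ref{thm:ExistUnique-LocLip-MVE}.

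Second, I would handle the non-Lipschitz self-stabilising term through the decomposition
\begin{equation*}
f\ast\mu^N_s(X^{i,N}_s) - f\ast\mu_s(X^i_s) = \bigl[f\ast\mu^N_s(X^{i,N}_s) - f\ast\mu^N_s(X^i_s)\bigr] + \bigl[f\ast\mu^N_s(X^i_s) - f\ast\mu_s(X^i_s)\bigr].
\end{equation*}
When paired with $X^{i,N}_s - X^i_s$ in an inner product, the first bracket is bounded above by $L\|X^{i,N}_s - X^i_s\|^2$ via the one-sided Lipschitz property of $f$ inherited from Assumption~\ref{ass:ExistUnique-LocLip-SSMVE}. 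The second bracket is estimated in $L^2$ using the polynomial growth $\|f(x)-f(y)\|\leq C\|x-y\|(1+\|x\|^{r-1}+\|y\|^{r-1})$ together with uniform-in-$N$ moment bounds on the particle system up to order $2r$, delivering a term of the form $C \cdot \bW^{(2)}_\cD(\mu^N_s,\mu_s)$ multiplied by a polynomial in the moments of $X^i_s$ and $X^{i,N}_s$.

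Third, the measure error is split via the triangle inequality
\begin{equation*}
\bW^{(2)}_\cD(\mu^N_s,\mu_s) \leq \bW^{(2)}_\cD(\mu^N_s,\tilde\mu^N_s) + \bW^{(2)}_\cD(\tilde\mu^N_s,\mu_s),
\end{equation*}
where $\tilde\mu^N_s := \tfrac{1}{N}\sum_{j=1}^N \delta_{X^j_s}$ is the empirical measure of the i.i.d.\ McKean--Vlasov copies. The first term is dominated by $\tfrac{1}{N}\sum_j \bE[\| X^{j,N}_s - X^j_s\|^2]$ via the natural coupling, which by exchangeability equals $\bE[\| X^{i,N}_s - X^i_s\|^2]$. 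The second term delivers exactly the three rates $N^{-1/2}$, $N^{-1/2}\log N$, $N^{-2/(d+4)}$ by the Fournier--Guillin type quantitative convergence estimate for empirical measures of i.i.d.\ samples, whose moment requirement is met by the hypothesis $p > \max\{2r,4\}$. Setting $u(t) := \sup_{s\in[0,t]} \bE[\|X^{i,N}_s - X^i_s\|^2]$, these ingredients combine to give an integral inequality $u(t) \leq c\int_0^t u(s)\,ds + c\cdot\mathrm{rate}(N,d)$ and Gr\"onwall's lemma concludes.

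The main obstacle will be establishing $\cS^p$-moment bounds on the particle system \eqref{eq:reflectedSSPS} that are uniform in $N$, since these are needed above to tame the polynomial growth of $f$. The natural route is to adapt the two-process induction argument of Proposition~\ref{prop:SSMVE-Moments} to the interacting setting, exploiting the antisymmetry $f(x) = -f(-x)$ together with the exchangeability of the indices $\{1,\ldots,N\}$ to reproduce the crucial bound \eqref{eq:prop:SSMVE-Moments1.0} at the level of the empirical measure, and thereby prevent the self-stabilising term from spoiling the moment estimate as $N\to\infty$.
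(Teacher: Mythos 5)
Your high-level plan is correct and would ultimately deliver the theorem, but the decomposition you chose for the self-stabilizing term is genuinely different from the paper's, and the difference is instructive because it creates an extra obstacle that the paper avoids entirely.

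You split $f\ast\mu^N_s(X^{i,N}_s) - f\ast\mu_s(X^i_s)$ as $\big[f\ast\mu^N_s(X^{i,N}_s) - f\ast\mu^N_s(X^i_s)\big] + \big[f\ast\mu^N_s(X^i_s) - f\ast\mu_s(X^i_s)\big]$: first move the spatial argument, then the measure. Your first bracket is handled by a direct one-sided Lipschitz estimate, which is clean and more elementary than anything the paper does. But your second bracket forces a further split through $\nu^N_s := \tfrac{1}{N}\sum_j\delta_{X^j_s}$, and the piece $f\ast\mu^N_s(X^i_s) - f\ast\nu^N_s(X^i_s) = \tfrac{1}{N}\sum_j\big[f(X^i_s - X^{j,N}_s) - f(X^i_s - X^j_s)\big]$ must be controlled by the local Lipschitz/polynomial growth of $f$, whose ``constant'' involves $\|X^{j,N}_s\|^{r-1}$. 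This is exactly why you conclude you need uniform-in-$N$ moment bounds on the particle system, and you flag that as the main obstacle. You are right that the obstacle arises \emph{on your route}, but it is not intrinsic to the problem.

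The paper instead splits $f\ast\mu^N_s(X^{i,N}_s) - f\ast\mu_s(X^i_s)$ as $\tfrac{1}{N}\sum_j\big[f(X^{i,N}_s - X^{j,N}_s) - f(X^i_s - X^j_s)\big] + \big[f\ast\nu^N_s(X^i_s) - f\ast\mu_s(X^i_s)\big]$, i.e.\ it changes the outer argument \emph{and} the empirical measure simultaneously in the first piece. The increment in the argument of $f$ is then $(X^{i,N}_s - X^i_s) - (X^{j,N}_s - X^j_s)$, which is antisymmetric in $(i,j)$; symmetrizing the double sum and invoking the oddness of $f$ turns the whole first piece into a sum to which the one-sided Lipschitz property of $f$ applies directly, bounding it by $L\sum_i\bE[\|X^{i,N}_s - X^i_s\|^2]$. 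The second piece is a pure i.i.d.\ fluctuation of the McKean--Vlasov copies and its $L^2$-norm is $O(\sqrt{N})$ by the cancellation of cross terms, needing only the already-established McKean--Vlasov moments from Proposition~\ref{prop:SSMVE-Moments}. Consequently the paper never needs uniform-in-$N$ moment estimates for the particle system at all. Your induction plan for such uniform estimates (mimicking Proposition~\ref{prop:SSMVE-Moments} via exchangeability and oddness of $f$) is plausible and would close your argument, but the paper's decomposition is the cheaper route precisely because the ``particle vs.\ McKean--Vlasov'' transition is done in one antisymmetric step rather than split into a spatial step plus a measure step.
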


\begin{proof}
Firstly, we assume that the noise driving the McKean-Vlasov equation \eqref{eq:reflectedSSMVE} and the noise driving the particle system \eqref{eq:reflectedSSPS} have correlation 1. Using  It\^o's formula, summing over $i$ and taking expectations, 

\color{black}
\begin{align}
\nonumber
\sum_{i=1}^N \bE\Big[ \| X_t^{i, N} - X_t^i\|^2\Big] \leq&
2L \int_0^t \sum_{i=1}^N \bE\Big[ \| X_s^{i,N} - X_s^{i} \|^2 \Big] ds 
 + 2L \int_0^t \sum_{i=1}^N \bE\Big[ \|X_s^{i,N} - X_s^{i} \| \cdot \bW_{\cD}^{(2)} (\mu^N_s, \mu_s) \Big] ds
\\
\nonumber
&+ 4L^2 \int_0^t \sum_{i=1}^N \bE\Big[ \| X_s^{i,N} - X_s^i \|^2  + \bW_{\cD}^{(2)} \Big( \mu_s^N, \mu_s \Big)^2\Big] ds
\\
\label{eq:thm:PoC1.1}
& + 2\int_0^t \sum_{i=1}^N \bE\Big[ \Big\langle X_s^{i, N} - X_s^i, \tfrac{1}{N}\sum_{j=1}^N f(X_s^{i, N} - X_s^{j, N}) - f(X_s^i - X_s^{j})  \Big\rangle \Big]  ds
\\
\label{eq:thm:PoC1.2}
& +2 \int_0^t \sum_{i=1}^N \bE\Big[ \Big\langle X_s^{i, N} - X_s^i, \tfrac{1}{N}\sum_{j=1}^N f(X_s^i - X_s^j) - f\ast \mu_s(X_s^i) \Big\rangle \Big]  ds.
\end{align}

Re-arranging the double sum and using that $f$ is odd, we can rewrite the integrand of \eqref{eq:thm:PoC1.1} as
\begin{align}
\sum_{i, j=1}^N \bE\Big[  
\Big\langle X_s^{i, N} - X_s^i,&  f(X_s^{i, N} - X_s^{j, N}) - f(X_s^i - X_s^{j})  \Big\rangle \Big]
\nonumber
\\
&
=\frac{1}{2}\sum_{i,j=1}^N \bE\Big[  \Big\langle (X_s^{i, N} - X_s^{j,N} )-(X_s^i-X_s^j),
f(X_s^{i, N} - X_s^{j, N}) - f(X_s^i - X_s^{j})  \Big\rangle \Big],
\label{v1}
\end{align}
and thus using the one-sided Lipschitz property of $f$ we can bound \eqref{v1} by $L\sum_{i=1}^N \bE\big[ \| X_s^{i, N} - X_s^i \|^2 \big] $. 

Consider the sum over $j$ in the integrand of \eqref{eq:thm:PoC1.2}. One observes that after using the Cauchy Schwartz inequality we have the product of the two terms
\begin{align}
    \nonumber
    \bE\Big[ \Big\langle X_s^{i, N} - X_s^i, \sum_{j=1}^N \big( f(X_s^i 
    & - X_s^j) - f\ast \mu_s(X_s^i) \big) \Big\rangle \Big]
    \\
    &\leq
    \bE\Big[ \| X_s^{i, N} - X_s^i\|\Big]^{1/2} \bE\Big[ \big\|\sum_{j=1}^N \big( f(X_s^i - X_s^j) - f\ast \mu_s(X_s^i) \big) \big\|^2 \Big]^{1/2}.
    \label{x3}
    \end{align}
    We next show that the second of these terms is bounded by $C \sqrt{N}$ for some fixed constant $C>0$. We have 
    \begin{align}
        \bE\Big[ \big\|\sum_{j=1}^N \big( f(X_s^i - X_s^j) - f\ast \mu_s(X_s^i) \big) \big\|^2 \Big]
        &= \sum_{j,k=1}^N\bE\Big[ \big\langle f(X_s^i - X_s^j) - f\ast \mu_s(X_s^i),f(X_s^i - X_s^k) - f\ast \mu_s(X_s^i) \big\rangle  \Big]
        \nonumber
        \\
        &=\sum_{j=1}^N \bE\Big[ \big\| f(X_s^i - X_s^j) - f\ast \mu_s(X_s^i)\big\|^2 \Big]
        \label{x1}
        \\
       & \leq C N
       \label{x2}
    \end{align}
where \eqref{x1} is due to the fact that the cross terms (i.e., $i\neq j$) are all zero since in this case $X^j$ is independent of $X^k$, and \eqref{x2} follows from the polynomial growth of $f$ and the control on the moments $\bE[\|X^i_s\|^{2r}]$. Using \eqref{x3} in conjunction with \eqref{x2}, it is clear that the integrand in \eqref{eq:thm:PoC1.2} is some constant multiple of $\sqrt{N}+\frac{1}{\sqrt{N}}\sum_{i=1}^N \bE[\|X^{i,N}_s-X^i_s\|^2]$ (from the inequality $|x|\leq 1+|x|^2$). Next, dealing with the $\bW_{\cD}^{(2)}( \mu_\cdot^N, \mu_\cdot)$ terms, set $\nu_\cdot^N = \tfrac{1}{N} \sum_{j=1}^N \delta_{X_\cdot^{j}}$. By the triangle inequality, we get
\begin{align}
\bE\Big[ \bW_{\cD}^{(2)}( \mu_s^N, \mu_s) \Big]
&
\leq
\bE\Big[ \Big( \tfrac{1}{N}\sum_{i=1}^N \| X_s^{i, N} - X_s^{i} \|^2 \Big)^{1/2}  + \bW_{\cD}^{(2)}( \nu_s^N, \mu_s) \Big].
\label{v4}
\end{align}
Assembling all the previous bounds with the estimate obtained after applying It\^o's formula, we get 
\begin{align}
\nonumber
\sum_{i=1}^N \bE\Big[ \| X_t^{i, N} - X_t^i\|^2\Big] \lesssim&
 \int_0^t \sum_{i=1}^N \bE\Big[ \| X_s^{i,N} - X_s^{i} \|^2 \Big]  ds + t\sqrt{N}
+ N \int_0^t  \bW_{\cD}^{(2)} (\mu^N_s, \mu_s) \Big] ds.
\end{align}
Noting that the particles are exchangeable, and taking the supremum over $t\in[0,T]$ we find that 
\begin{align*}
\sup_{t\in[0,T]} \bE\Big[ \| X_t^{i, N} - X_t^{i} \|^2 \Big]
\lesssim& 
 \int_0^T \sup_{t\in[0,s]} \bE\Big[ \|X_s^{i, N} - X_s^i\|^2 \Big] ds
+ T\Big(\frac{1}{\sqrt{N}}+\sup_{t\in[0,T]} \bE\Big[ \bW_{\cD}^{(2)} \Big(\nu^N_t, \mu_t \Big)^2 \Big]\Big).
\end{align*}
Applying Gr\"onwall inequality yields 
\begin{align*}
\sup_{t\in[0,T]} \bE\Big[ \| X_t^{i, N} - X_t^{i} \|^2 \Big] 
\lesssim
T\Big(\frac{1}{\sqrt{N}}+\sup_{t\in[0,T]} \bE\Big[ \bW_{\cD}^{(2)} \Big(\nu^N_t, \mu_t \Big)^2 \Big]\Big).
\end{align*}
Finally, by assumption on $p$ all processes have moments larger the 4th one, thus one can use the well known rate of convergence for an empirical distribution to the true law, see  \cite{carmona2018probabilistic}*{Theorem 5.8}, and obtain
$$
\bE\Big[ \bW_{\cD}^{(2)} \Big(\nu^N_t, \mu_t \Big)^2 \Big] \lesssim \begin{cases} 
N^{-1/2},~&d<4,
\\
N^{-1/2}\log N,~&d=4,
\\
N^{\frac{-2}{d+4}},~&d>4,
\end{cases}
$$
to conclude. Note that the latter convergence rate dominates the $T/\sqrt{N}$ element in the main error estimate.
\end{proof}
\color{black}
\subsection{An example}

A key advantage of the framework that we consider for Theorem \ref{thm:ExistUnique-LocLip-MVE} and Theorem \ref{thm:ExistUnique-LocLip-SSMVE} is that the drift term $b$ is locally Lipschitz over $\cD$. We demonstrate that the measure dependencies allowed for with the self-stabilizing term $f \ast \mu$ do not satisfy a Lipschitz condition with respect to the Wasserstein distance. 
\begin{example}
Let $\cD = \bR^+$. Let $F(x) = {-x^4}/{4}$ so that $f(x) = \nabla F(x) = -x^3$. Consider the dynamics
$$
X_t = W_t - \int_0^t \int_{\cD} (X_s - y)^3 \mu_t(dy) ds - k_t, \quad \mu_t (dx) = \bP\big[ X_t \in dx\big], \quad X_0 = 1.
$$

Without entering details and assuming $\mu, \nu \in \cP_4(\cD)$, the Lions derivative of $\mu\mapsto \Psi_x(\mu):= -\int_\cD (x-y)^3\mu(dy)$ is unbounded, meaning that the "Lipschitz" constant of $\mu\mapsto \Psi_x(\mu)$ depends on $x$ in an unbounded way since $\cD$ is unbounded.

For the reader familiarised with the theory, see \cite{carmona2018probabilistic}*{Section 5}, the Lions derivative of the functional $\Psi_x(\cdot)$ follows from Example 1 in Section 5.2.2 (p385) and is given by $\partial_\mu \psi_x(\mu)(Z)=f'(x-Z)$ for $Z\sim \mu$. Their Remark 5.27 (p384) and Remark 5.28 (p390) connect to the Lipschitz constant.
\end{example}

\section{Large Deviation Principles}
\label{sec:LDPs}

Throughout this section let $\varepsilon>0$, all results hold under the following assumptions:

\begin{assumption}
\label{assumption : Holder regularity of sigma}
Suppose that $\cD\subset \bR^d$ satisfies Assumption \ref{assumption:domain}. Suppose that $b,\sigma,$ and $f$ satisfy Assumptions \ref{ass:ExistUnique-LocLip-SSMVE}. Additionally, suppose that $\exists L>0, \exists \beta\in(0,1]$ such that  $\forall s,t\in[0,T]$, $\forall \mu \in \cP_2(\cD)$ and $\forall x \in \cD$,
\begin{equation*}
\|\sigma(t,x,\mu)-\sigma(s,x,\mu)\|\leq L\|t-s\|^{\beta}.
\end{equation*}
\end{assumption}
The regularity on $\sigma$ imposed above will allow us to make an Euler scheme approximation to the dynamics. We begin by reminding the reader of the definition of a Freidlin-Wentzell Large Deviation Principle. 
\begin{defn}
Let $E$ be a metric space. A function $I:E \to [0,\infty]$ is said to be a \emph{rate function} if it is lower semi-continuous and the level sets of $I$ are closed. A \emph{good rate function} is a rate function whose level sets are compact. 
\end{defn}

The rate function is used to encode the asymptotic rate for a convergence in probability statement that is called a Large Deviations Principle. 

\begin{defn}
\label{definition LDP}
Let $x\in \cD$. A family of probability measures $\{\mu^{\varepsilon}\}_{\varepsilon>0}$ on $C_x([0,T]; \cD)$ is said to satisfy a Large Deviations Principle with rate function $I$ if  
\begin{equation}
-\inf_{h\in G^\circ} I(h) \leq
\liminf_{\varepsilon \to 0} \varepsilon \log\mu^\varepsilon [G^\circ] \leq 
\limsup_{\varepsilon \to 0}\varepsilon \log\mu^{\varepsilon}[\overline{G}] \leq 
-\inf_{h\in \overline{G}} I(h),
\end{equation}
for all Borel subsets $G$ of the space $C_x([0,T]; \cD)$.
\end{defn}

We prove a Freidlin-Wentzell Large Deviation Principle for the class of 
reflected McKean-Vlasov equations studied in Section \ref{section exist.unique}. The inclusion of non-Lipschitz measure dependence and reflections extends the classical Freidlin-Wentzell results for SDEs found in \cites{DZ,deuschel2001large,den2008large}. 

Our approach uses sequences of exponentially good approximations, inspired by the methods of \cite{HIP} and \cite{dos2019freidlin}. As with previous works proving Freidlin-Wentzell LDP results for McKean-Vlasov SDEs, the non-Lipschitz measure dependency is accounted for by establishing an LDP for a diffusion that is an exponentially tight approximation. 

The section is structured as follows, first a deterministic path is identified  which the solution to \eqref{eq:MVSS-LDP} approaches as $\varepsilon \to 0$. Definition \eqref{equation Y classical reflected SDE} then introduces an approximation  of \eqref{eq:MVSS-LDP} where the law is replaced by this deterministic path. An LDP is established for this approximation by first obtaining an LDP for its Euler scheme in Lemma \ref{lemma the LDP for Y classical euler reflected sde}, and then transferring it via the method of exponential approximations in Lemmas \ref{lemma : euler scheme is an expo good approximation} and \ref{lemma the LDP for Y classical reflected sde}. Finally the LDP for the object of interest \eqref{eq:MVSS-LDP} is acquired by establishing exponential equivalence between it and the approximation of Definition \ref{definition Y classical reflected SDE}. 

\subsection{Convergence of the law}

Recall that the key point of an LDP is to characterise the rate at which the probability of rare events decreases as we change a parameter in our experiment. In the case of path space LDP for a stochastic processes this relies on identifying a path which the diffusion increasingly concentrates around as the noise decays. The dynamics of the process can then be seen as small perturbations from this fixed path, often referred to as the skeleton path. Consider the reflected McKean-Vlasov SDE
\begin{equation}
\label{eq:MVSS-LDP}
\begin{split}
X^{\varepsilon}_t =& x_0 + \int_0^t b(s,X^{\varepsilon}_s, \mu^{\varepsilon}_s) ds + \int_0^t f\ast \mu_s^\varepsilon ( X^{\varepsilon}_s) ds + \sqrt{\varepsilon} \int_0^t \sigma(s,X^{\varepsilon}_s,\mu^\varepsilon_s) dW_s - k_t^\varepsilon, 
\\
|k^\varepsilon|_t =& \int_0^t \1_{\partial \cD}(X_s^\varepsilon) d|k^\varepsilon|_s, \quad
k^\varepsilon_t = \int_0^t \1_{\partial \cD}(X_s^\varepsilon) \n(X_s^\varepsilon) d|k^\varepsilon|_s.
\end{split}
\end{equation}

Heuristically, as $\varepsilon \to 0$ the noise term  in \eqref{eq:MVSS-LDP} vanishes, the law of $X^\varepsilon$ tends to a Dirac measure of its own deterministic trajectory and hence the interaction term vanishes. Therefore in the small noise limit the dynamics is governed by $b$ and the diffusion behaves like the solution to the following deterministic Skorokhod problem.

\begin{defn}\label{definition skeleton process}
Define $\psi^{x_0}$ to be the solution to the reflected ODE
\begin{equation}
\label{eq:SkeletonProcess-0}
\begin{split}
\psi^{x_0}(t) =& x_0 + \int_0^t b(s,\psi^{x_0}(s),\delta_{\psi^{x_0}(s)} )ds - k_t^\psi, 
\\
|k^\psi|_t =& \int_0^t \1_{\partial \cD}(\psi(s) ) d|k^\psi|_s, \quad
k^\psi_t = \int_0^t \1_{\partial \cD}(\psi(s) ) \n(\psi(s) ) d|k^\psi|_s, 
\end{split}
\end{equation}
on the interval $[0,T]$. We define the Skeleton operator $H: \cH_1^0 \to C_{x_0}([0,T]; \cD)$ by $h \mapsto H[h]$ where
\begin{equation}
\label{eq:SkeletonProcess-h}
\begin{split}
H[h]_t =& x_0 + \int_0^t b(s, H[h]_s, \delta_{\psi^{x_0}(s)}) ds + \int_0^t f(H[h]_s - \psi^{x_0}(s)) ds + \int_0^t \sigma( s, H[h]_s, \delta_{\psi^{x_0}(s)}) dh_s - k_t^h,
\\
|k^h|_t =& \int_0^t \1_{\partial \cD}(H[h]_s) d|k^h|_s, \quad
k^h_t = \int_0^t \1_{\partial \cD}(H[h]_s) \n(H[h]_s) d|k^h|_s. 
\end{split}
\end{equation}
\end{defn}

The existence of a unique solution to the Skorokhod problem for a continuous path into a convex domain \cite{tanaka2002stochastic}*{Theorem 2.1} ensures the existence and uniqueness of a solution to Equation \eqref{eq:SkeletonProcess-h}, this can we proved in a similar and fashion to \cite{tanaka2002stochastic}*{Theorem 4.1}. Hence the operator $H[h]$ is well defined. \\

The following lemma proves that, for small $\epsilon$, the solution $X^\epsilon$ to \eqref{eq:MVSS-LDP} will remain close to the trajectory $\psi^{x_0}$ of the skeleton ODE \eqref{eq:SkeletonProcess-0}. Moreover the law $\mu^\varepsilon$ can be shown to tend to the Dirac measure of  $\psi^{x_0}$.

\begin{lemma}
Let $X^\varepsilon$ be the solution to \eqref{eq:MVSS-LDP} and $\mu^\varepsilon$ its law. Let $\psi^{x_0}$ be the solution of \eqref{eq:SkeletonProcess-0}. Then we have for any $T> 0$, 
\begin{equation}\label{equation X goes to skeleton}
\sup_{t\in[0,T]} \bE\Big[ \|X_t^\varepsilon-\psi^{x_0}(t) \|^2 \Big] \leq \varepsilon T e^{cT}, 
\end{equation}
for a constant $c$ independent of $\varepsilon$ and $x_0$. Moreover for any $x\in \bR^d$ we have that 
\begin{equation}\label{equation convergence of law to the Diract path}
\lim_{\varepsilon \to 0} \| f\ast \mu_t^\varepsilon (x) - f(x - \psi^{x_0}(t)) \|_{\infty, [0,T]}=0.
\end{equation}
\end{lemma}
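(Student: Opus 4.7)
The plan is to obtain \eqref{equation X goes to skeleton} via a direct It\^o's formula computation on $\|X_t^\varepsilon - \psi^{x_0}(t)\|^2$ combined with a Gr\"onwall argument, exploiting the $\sqrt{\varepsilon}$ prefactor on the stochastic integral so that the resulting bound carries the desired linear $\varepsilon$ factor. Then \eqref{equation convergence of law to the Diract path} will follow from \eqref{equation X goes to skeleton} using the polynomial Lipschitz bound on $f$ together with the moment estimates of Proposition \ref{prop:SSMVE-Moments}.

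For \eqref{equation X goes to skeleton}, applying It\^o's formula to the difference $X_t^\varepsilon-\psi^{x_0}(t)$ and taking expectations produces five groups of terms. The two reflection contributions $-2\langle X_s^\varepsilon-\psi^{x_0}(s), dk_s^\varepsilon\rangle$ and $2\langle X_s^\varepsilon-\psi^{x_0}(s), dk_s^\psi\rangle$ are non-positive by Lemma \ref{lem:NormalToDomain}: when $X_s^\varepsilon\in\partial\cD$, convexity yields $\langle\n(X_s^\varepsilon),\psi^{x_0}(s)-X_s^\varepsilon\rangle\leq 0$, and symmetrically for $k^\psi$. The drift difference I would split as $b(s,X_s^\varepsilon,\mu_s^\varepsilon)-b(s,\psi^{x_0}(s),\mu_s^\varepsilon)+b(s,\psi^{x_0}(s),\mu_s^\varepsilon)-b(s,\psi^{x_0}(s),\delta_{\psi^{x_0}(s)})$ and bound via the one-sided Lipschitz condition in space plus the $\bW^{(2)}$-Lipschitz condition in measure, crucially using the identity
\[
\bW_\cD^{(2)}\bigl(\mu_s^\varepsilon,\delta_{\psi^{x_0}(s)}\bigr)^2 \;=\; \bE\bigl[\|X_s^\varepsilon-\psi^{x_0}(s)\|^2\bigr],
\]
so that the measure dependence closes onto the very quantity being estimated. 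The quadratic variation term $\varepsilon\bE[\|\sigma(s,X_s^\varepsilon,\mu_s^\varepsilon)\|^2]$ is controlled by splitting about $\sigma(s,\psi^{x_0}(s),\delta_{\psi^{x_0}(s)})$, whose squared $L^2$-norm on $[0,T]$ is finite by Assumption \ref{ass:ExistUnique-LocLip-SSMVE}, and absorbing the Lipschitz remainder into the Gr\"onwall.

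The main obstacle is the self-stabilizing term, which is not Lipschitz in measure. I would resolve it by introducing an iid copy $\bar X_s^\varepsilon$ of $X_s^\varepsilon$ and using the same odd-symmetry trick as in Proposition \ref{prop:SSMVE-Moments}. Since $\psi^{x_0}(s)$ is deterministic and $f$ is odd with $f(0)=0$,
\[
2\bE\bigl[\langle X_s^\varepsilon-\psi^{x_0}(s),\, f\ast\mu_s^\varepsilon(X_s^\varepsilon)\rangle\bigr] \;=\; \bE\otimes\bar\bE\bigl[\langle X_s^\varepsilon-\bar X_s^\varepsilon,\, f(X_s^\varepsilon-\bar X_s^\varepsilon)\rangle\bigr] \;\leq\; 4L\,\bE\bigl[\|X_s^\varepsilon-\psi^{x_0}(s)\|^2\bigr],
\]
where the last step uses the one-sided Lipschitz bound on $f$ applied between $X_s^\varepsilon-\bar X_s^\varepsilon$ and $0$, together with $(a-b)^2\leq 2(a-\psi)^2+2(b-\psi)^2$. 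Collecting everything, taking a supremum over $[0,t]$ and applying Gr\"onwall's inequality to $\varphi(t):=\sup_{r\leq t}\bE[\|X_r^\varepsilon-\psi^{x_0}(r)\|^2]$ yields $\varphi(t)\leq \varepsilon T e^{ct}$ for a constant $c$ depending only on $L$ and on the norms of $b,\sigma$ along $\psi^{x_0}$.

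For \eqref{equation convergence of law to the Diract path}, write $f\ast\mu_t^\varepsilon(x)-f(x-\psi^{x_0}(t))=\bE[f(x-X_t^\varepsilon)-f(x-\psi^{x_0}(t))]$ and invoke the polynomial Lipschitz bound
\[
\|f(u)-f(v)\|\leq C\,\|u-v\|\bigl(1+\|u\|^{r-1}+\|v\|^{r-1}\bigr)
\]
from Assumption \ref{ass:ExistUnique-LocLip-SSMVE} with $u=x-X_t^\varepsilon$ and $v=x-\psi^{x_0}(t)$. Cauchy--Schwarz then gives
\[
\bigl\|f\ast\mu_t^\varepsilon(x)-f(x-\psi^{x_0}(t))\bigr\| \;\leq\; C\,\bE\bigl[\|X_t^\varepsilon-\psi^{x_0}(t)\|^2\bigr]^{1/2}\cdot\bE\Bigl[\bigl(1+\|x-X_t^\varepsilon\|^{r-1}+\|x-\psi^{x_0}(t)\|^{r-1}\bigr)^2\Bigr]^{1/2}.
\]
The second factor is bounded uniformly in $t\in[0,T]$ and $\varepsilon\in(0,1]$ by the $L^{2(r-1)}$ moment estimate of Proposition \ref{prop:SSMVE-Moments} (since $p>2r$), while the first factor is $O(\sqrt{\varepsilon})$ by \eqref{equation X goes to skeleton}. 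Taking the supremum over $t\in[0,T]$ and sending $\varepsilon\to 0$ completes the argument.
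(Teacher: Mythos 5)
Your proof is correct and reaches the bound by the same Itô--Gr\"onwall mechanism as the paper, but your treatment of the self-stabilizing term is a genuinely different (and cleaner) route. The paper handles $2\bE\big[\langle X_s^\varepsilon-\psi^{x_0}(s),\,f\ast\mu_s^\varepsilon(X_s^\varepsilon)\rangle\big]$ by invoking the polynomial-Lipschitz growth condition on $f$ together with Cauchy--Schwarz, so its Gr\"onwall constant inherits a factor $\sup_{t\in[0,T]}\bE[(1+\|X_t^\varepsilon-\psi^{x_0}(t)\|^r)^2]^{1/2}$ and therefore leans on the uniform-in-$\varepsilon$ moment estimates of Proposition~\ref{prop:SSMVE-Moments} to close the loop. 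You instead exploit the exchangeability of $X^\varepsilon$ with an iid copy $\bar X^\varepsilon$ and the oddness of $f$ to pass to $\bE\otimes\bar\bE[\langle X_s^\varepsilon-\bar X_s^\varepsilon,\,f(X_s^\varepsilon-\bar X_s^\varepsilon)\rangle]$, and then apply the one-sided Lipschitz condition with $f(0)=0$ and the elementary inequality $\|a-b\|^2\le 2\|a-\psi\|^2+2\|b-\psi\|^2$, yielding the clean bound $4L\,\bE[\|X_s^\varepsilon-\psi^{x_0}(s)\|^2]$. This is precisely the symmetry device the paper itself uses in the proof of Proposition~\ref{prop:SSMVE-Moments} (their equation~\eqref{eq:prop:SSMVE-Moments1.0}), transplanted here; it avoids any reliance on higher moments for the first display, at no cost. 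Everything else -- the sign analysis of both reflection terms via Lemma~\ref{lem:NormalToDomain}, the split of the drift into spatial and measure parts with the identity $\bW_\cD^{(2)}(\mu_s^\varepsilon,\delta_{\psi^{x_0}(s)})^2=\bE[\|X_s^\varepsilon-\psi^{x_0}(s)\|^2]$, the control of the diffusion quadratic variation by centring $\sigma$ at $(\psi^{x_0}(s),\delta_{\psi^{x_0}(s)})$, and the Cauchy--Schwarz-plus-moment argument for~\eqref{equation convergence of law to the Diract path} -- matches the paper's proof.
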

\begin{proof} Let $t\in [0,T]$. We have
\begin{align*}
\| X_t^\varepsilon - \psi^{x_0}(t)\|^2 =& 2\int_0^t \Big\langle X_s^\varepsilon - \psi^{x_0}(s), b(s, X_s^\varepsilon, \mu_s^\varepsilon) - b(s, \psi^{x_0}(s), \delta_{\psi^{x_0}(s)} ) \Big\rangle ds
\\
&+ \sqrt{\varepsilon} \int_0^t \Big\langle X_s^\varepsilon - \psi^{x_0}(s), \sigma(s, X_s^\varepsilon, \mu_s) dW_s \Big\rangle  
+\varepsilon \int_0^t \| \sigma(s, X_s^\varepsilon, \mu_s^\varepsilon) \|^2 ds
\\
&+ \int_0^t \Big\langle X_s^\varepsilon - \psi^{x_0}(s), f(X_s^\varepsilon) \ast \mu_s^\varepsilon \Big\rangle ds
- \int_0^t \Big\langle X_s^\varepsilon - \psi^{x_0}(s), dk^\varepsilon_s - dk^{\psi}_s\Big\rangle.
\end{align*}
Thus
\begin{align*}
\sup_{t\in[0,T]} \bE\Big[ \| X_t^\varepsilon - \psi^{x_0}(t)\|^2 \Big] 
\leq& 
6L \int_0^T \sup_{s\in[0,t]} \bE\Big[ \| X_s^\varepsilon - \psi^{x_0}(s)\|^2 \Big] ds
\\
& + C \cdot \sup_{t\in[0,T]} \bE\Big[ \Big(1 + \| X_t^\varepsilon - \psi(t)\|^{r}\Big)^2 \Big]^{1/2} \cdot \int_0^T \sup_{s\in[0,t]} \bE\Big[ \| X_s^\varepsilon - \psi^{x_0}(s)\|^2 \Big] dt
\\
&+ \varepsilon \Big( 6TL^2 \sup_{t\in[0,T]} \bE\Big[ \| X_t^\varepsilon - x_0\|^2\Big] + 3 \int_0^T \| \sigma(t, x_0, \delta_{x_0}) \|^2 dt \Big).
\end{align*}
Therefore we can conclude \eqref{equation X goes to skeleton} from the finite moment estimates proved in Proposition \ref{prop:SSMVE-Moments} and Gr\"onwall's inequality. Next, \eqref{equation convergence of law to the Diract path} follows from 
 \eqref{equation X goes to skeleton}
\begin{align*}
\sup_{t\in[0,T]} \| & f \ast \mu_t^\varepsilon(x) - f(x-\psi^{x_0}(t) \| 
\\
&
\leq
C \sup_{t\in[0,T]} \bE\Big[ \|X_t^\varepsilon - \psi^{x_0}(t) \|^2 \Big]^{1/2} \cdot \bE\Big[ \Big( 1+ \| X_t^\varepsilon \|^{r-1} + \| \psi^{x_0}(t)\|^{r-1} \Big)^2 \Big]^{1/2} 
\underset{\varepsilon \to 0}{\longrightarrow}0.
\end{align*}
\end{proof}

\subsection{A classical Freidlin-Wentzell result}

Since the law $\mu^{\varepsilon}$ tends to the Dirac mass of the path $ \psi^{x_0}$, we will first study SDEs where the law in the coefficients of the McKean-Vlasov equation has been replaced by $\delta_{\psi^{x_0}}$. 

\begin{defn}\label{definition Y classical reflected SDE}
Let $Y^{\varepsilon}$ be the solution of 
\begin{equation}
\label{equation Y classical reflected SDE}
\begin{split}
Y^{\varepsilon}_t =& x_0 + \int_0^t b(s,Y^{\varepsilon}_{s},\delta_{\psi^{x_0}(s)})ds +  \int_0^t f\Big( Y^{\varepsilon}_{s} - \psi^{x_0}(s) \Big)ds + \sqrt{\varepsilon} \int_0^t \sigma(s,Y^\varepsilon_s,\delta_{\psi^{x_0}(s)}) dW_s - k^{Y}_{t}, 
\\
|k^{Y}|_t =& \int_0^t \1_{\partial \cD}(Y^\varepsilon_s) d|k^{Y}|_s, 
\qquad
k^{Y}_t = \int_0^t \1_{\partial \cD}(Y^\varepsilon_s) \n(Y^\varepsilon_s) d|k^{Y}|_s.
\end{split}
\end{equation}
\end{defn}

The dynamics of \eqref{equation Y classical reflected SDE} satisfy those of Theorem \ref{thm:ExistUnique-LocLip-Ref}, so the existence and uniqueness of a solution is established. Further, we introduce the follow approximation of \eqref{equation Y classical reflected SDE}.

\begin{defn}
\label{equation Y classical reflected SDE - euler}
Let $n\in \bN$. Let $Y^{n,\varepsilon}$ be the solution of
\begin{align}
\nonumber
Y_{t}^{n,\varepsilon}=& x_0 + \int_0^t b(s,Y^{n,\varepsilon}_{s},\delta_{\psi^{x_0}(s)})
+  f\Big( Y^{n,\varepsilon}_{s} - \psi^{x_0}(s) \Big)ds 
\\
\nonumber
&\sqrt{\varepsilon}\sum_{i=0}^{\lfloor \frac{tn}{T} \rfloor - 1} \sigma\Big( \tfrac{iT}{n} ,Y^{n,\varepsilon}_{\tfrac{iT}{n} },\delta_{\psi^{x_0}\big(\tfrac{iT}{n} \big)} \Big)\cdot \Big( W_{\tfrac{(i+1)T}{n}} - W_{\tfrac{iT}{n}} \Big) 
\\
\label{equation Y classical reflected SDE Euler scheme}
&+ \sqrt{\varepsilon} \sigma\Big( \tfrac{T\lfloor \frac{tn}{T}\rfloor}{n} ,Y^{n,\varepsilon}_{\tfrac{T\lfloor \frac{tn}{T}\rfloor}{n} },\delta_{\psi^{x_0}\big(\tfrac{T\lfloor \frac{tn}{T}\rfloor}{n} \big)} \Big) \Big( W_{\tfrac{T\lceil \frac{tn}{T}\rceil}{n}} - W_{\tfrac{T\lfloor \frac{tn}{T}\rfloor}{n}} \Big) n\Big( t - \tfrac{T\lfloor \frac{tn}{T}\rfloor}{n}\Big) - k^{Y^{n,\varepsilon}}_{t}
\\
\nonumber
|k^{Y^{n,\varepsilon}}|_t =& \int_0^t \1_{\partial \cD}(Y_s^{n, \varepsilon}) d|k^{Y^{n,\varepsilon}}|_s, 
\qquad
k^{Y^{n,\varepsilon}}_t = \int_0^t \1_{\partial \cD}(Y_{s}^{n,\varepsilon}) \n(Y_{s}^{n,\varepsilon}) d|k^{Y^{n,\varepsilon}}|_s.
\end{align}
\end{defn}

On a subset of measure 1, Equation \eqref{equation Y classical reflected SDE Euler scheme} determines the dynamics of a random ODE for which the Skorokhod problem has already been solved, so existence and uniqueness are already assured. 

\begin{defn}
Let $I': C_{0}([0,T]; \bR^d) \to \mathbb{R}$ be the rate function of Schilder's Theorem \cite{DZ}*{Theorem 5.2.3},
\begin{equation*}
I'(g) =  \begin{cases}\frac{1}{2}\int_0^T\| \dot{g}(t) \|^2dt~& \text{if}~ g\in \cH^0_1, \\ \infty~& \text{otherwise}, \end{cases}
\end{equation*}
where $\cH^0_1$ is the Cameron Martin space for Brownian motion defined in Section \ref{sec:Preliminaries}.
\end{defn}

Define the functional $H^n: C_{0}([0,T]; \bR^d) \to C_{x_{0}}([0,T]; \bR^d)$, which maps the Brownian path to the reflected path of \eqref{equation Y classical reflected SDE Euler scheme}, that is
\begin{align} 
\nonumber
H^n[h](t) =& x_0  + \int_0^t b\big(s,H^n[h](s),\delta_{\psi^{x_0}(s)}\big) + f\Big(H^n[h](s) - \psi^{x_0}(s)\Big) ds - k^{h,n}_t 
\\
\nonumber
&+ \sum_{i=0}^{\lfloor \frac{tn}{T} \rfloor -1} \sigma\Big(\frac{iT}{n},H^n[h]\Big(\frac{iT}{n}\Big),\delta_{\psi^{x_0}(\frac{iT}{n})} \Big) \Big(h\Big(\frac{(i+1)T}{n}\Big)-h\Big(\frac{iT}{n}\Big)\Big) 
\\
\label{e30}
&+ \sigma\Big(\frac{T\lfloor \frac{tn}{T} \rfloor }{n},H^n[h]\Big(\frac{T\lfloor \frac{tn}{T} \rfloor }{n}\Big),\delta_{\psi^{x_0}(\frac{T\lfloor \frac{tn}{T} \rfloor }{n})} \Big)\Big( h\Big( \tfrac{T\lceil \frac{tn}{T}\rceil}{n} \Big) - h\Big(\tfrac{T\lfloor \frac{tn}{T} \rfloor  }{n}\Big)\Big)\frac{n}{T} \Big(t-\frac{T\lfloor \frac{tn}{T} \rfloor}{n}\Big), 
\\
|k^{h,n}|_t =& \int_0^t \1_{\partial \cD}(H^n[h](s)) d|k^{h,n}|_s, \quad
k^{h,n}_t = \int_0^t \1_{\partial \cD}(H^n[h](s)) \n(H^n[h](s)) d|k^{h,n}|_s. 
\nonumber
\end{align}

When restricted to $\cH_1^0$, the operator $H^n$ represents a Skeleton operator for the random ODE \eqref{equation Y classical reflected SDE Euler scheme}. Equation \eqref{equation Y classical reflected SDE} is a classical reflected SDE and \cite{dupuis1987large}*{Theorem 3.1} proves a Freidlin-Wentzell type LDP for such reflected SDEs when the coefficients are bounded and Lipschitz. The following lemma extends this result to unbounded domains and allows for unbounded locally Lipschitz coefficients, this is done via the contraction principle \cite{DZ}*{Theorem 4.2.1}. For convenience of notation let

\begin{align*}
   \hat{t}:=\frac{T\lceil \frac{tn}{T}\rceil}{n},~ \check{t}:=\frac{T\lfloor \frac{tn}{T}\rfloor}{n},~\text{and}~ \hat{s}:=\frac{T\lceil \frac{sn}{T}\rceil}{n},~ \check{s}:=\frac{T\lfloor \frac{sn}{T}\rfloor}{n}.
\end{align*}
\begin{lemma}
\label{lemma : continuity of the map Hn}
For each $n\in \bN$, the mapping $H^n :C_0([0,T];\bR^d) \to C_{x_{0}}([0,T];\bR^d)$ defined by \eqref{e30} is continuous.
\end{lemma}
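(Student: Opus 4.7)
The plan is to prove continuity of $H^n$ by induction over the $n$ subintervals $[t_i, t_{i+1}]$ with $t_i := iT/n$, exploiting the fact that within each such subinterval the path $H^n[h](t)$ solves a reflected ODE whose dependence on $h$ enters only through the point-evaluations $h(t_i)$ and $h(t_{i+1})$ (via the piecewise-linear interpolation of the $\sigma$-term) and through the starting value $H^n[h](t_i)$. Fix $h_0 \in C_0([0,T];\bR^d)$ and a sequence $h_k \to h_0$ uniformly; without loss of generality $\|h_k\|_{\infty,[0,T]} \leq \|h_0\|_{\infty,[0,T]} + 1$. Writing $X^{(k)}:=H^n[h_k]$, Theorem \ref{thm:ExistUnique-LocLip-Ref} applied subinterval by subinterval (absorbing the $\sigma$-contribution into a piecewise-constant drift) will yield a uniform a priori bound $\sup_k \|X^{(k)}\|_{\infty,[0,T]} \leq K_M$.

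For the inductive step on $[t_i, t_{i+1}]$, assume $\sup_{t\in[0,t_i]}\|X^{(k)}(t)-X^{(0)}(t)\| \to 0$ and differentiate $\|X^{(k)}(t)-X^{(0)}(t)\|^2$. The $b$-term is bounded using its one-sided Lipschitz property from Assumption \ref{ass:ExistUnique-LocLip-SSMVE}; the $f$-term is bounded analogously, since $x\mapsto f(x-\psi^{x_0}(s))$ inherits one-sided Lipschitz continuity; and the reflection contribution is non-positive by convexity of $\cD$ and Lemma \ref{lem:NormalToDomain}. The essential term is the $\sigma$-contribution, whose increment equals the difference
$$\sigma(t_i, X^{(k)}(t_i), \delta_{\psi^{x_0}(t_i)})\bigl(h_k(t_{i+1})-h_k(t_i)\bigr)-\sigma(t_i, X^{(0)}(t_i), \delta_{\psi^{x_0}(t_i)})\bigl(h_0(t_{i+1})-h_0(t_i)\bigr).$$
By the triangle inequality this splits into a part dominated by the spatial Lipschitz constant of $\sigma$ times $\|X^{(k)}(t_i)-X^{(0)}(t_i)\|$ times $|h_k(t_{i+1})-h_k(t_i)|$ (small by the induction hypothesis, using the uniform bound on $h_k$) and a part dominated by $\|\sigma(t_i,X^{(0)}(t_i),\delta_{\psi^{x_0}(t_i)})\|\cdot\|h_k-h_0\|_{\infty,[0,T]}$, both of which vanish as $k\to\infty$. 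Gr\"onwall's inequality then transfers this estimate to $\sup_{t\in[t_i,t_{i+1}]}\|X^{(k)}(t)-X^{(0)}(t)\|\to 0$, advancing the induction.

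The main subtlety to be flagged is the polynomial growth of $f$: since $f$ is not globally Lipschitz, one cannot directly estimate $\|f(X^{(k)}-\psi^{x_0})-f(X^{(0)}-\psi^{x_0})\|$ by a constant multiple of $\|X^{(k)}-X^{(0)}\|$ uniformly in $k$. The resolution is that in the $\|\cdot\|^2$-chain rule only the inner product $\langle X^{(k)}-X^{(0)}, f(X^{(k)}-\psi^{x_0})-f(X^{(0)}-\psi^{x_0})\rangle$ appears, and this is controlled by $L\|X^{(k)}-X^{(0)}\|^2$ directly via the one-sided Lipschitz property of $f$; the a priori bound $K_M$ is needed only to ensure the $\sigma$-term stays bounded in the non-critical estimates. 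Iterating the induction $n$ times yields uniform convergence $H^n[h_k]\to H^n[h_0]$ on $[0,T]$, proving continuity.
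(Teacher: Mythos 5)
Your proof is correct and follows essentially the same strategy as the paper: a $\|\cdot\|^2$-type Gr\"onwall estimate that uses the one-sided Lipschitz property of $b$ and $f$, convexity of $\cD$ (Lemma \ref{lem:NormalToDomain}) to drop the reflection term, and a split of the $\sigma$-contribution into a part controlled by the spatial Lipschitzness of $\sigma$ together with the (bounded) increments of $h_k$, and a part controlled by $\|h_k-h_0\|_{\infty}$. The only cosmetic difference is that you run the Gr\"onwall argument subinterval by subinterval rather than in one pass over $[0,T]$ with $n$ held fixed, and the uniform a priori bound $K_M$ you invoke is actually superfluous, since after the split the non-critical $\sigma$-factor is evaluated at the fixed limit path $H^n[h_0]$ and the other piece is handled by the Lipschitz constant of $\sigma$ alone.
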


\begin{proof}

Let $\{h_m: m\in \bN\} \subset C_0([0,T]; \bR^d)$ and suppose $\lim_{m\to \infty} \| h_m - h\|_{\infty, [0,T]} =0$. We denote  $\phi=H^n[h]$ and $\phi_m=H^n[h_m]$. Then
\begin{align*}
\| \phi(t) - \phi_m(t)\|^2 =& 2\int_0^t \Big\langle \phi(s) - \phi_m(s), b(s, \phi(s), \delta_{\psi(s)} ) - b(s, \phi_m(s), \delta_{\psi(s)} ) \Big\rangle ds
\\
&+ 2\int_0^t \Big\langle  \phi(s) - \phi_k(s), f( \phi(s) - \psi(s) ) - f( \phi_m(s) - \psi(s) ) \Big\rangle ds
\\
& -2\int_0^t \Big\langle \phi(s) - \phi_m(s), dk_s^{h, n} - dk_s^{h_m, n} \Big\rangle
\\
&+ 2n\int_0^t \Big\langle \phi(s) - \phi_m(s), \sigma( \check{s}, \phi(\check{s}), \delta_{\psi(\check{s})} ) \Big( h( \hat{s}) - h( \check{s}) \Big) 
\\
&\qquad - \sigma(\check{s}, \phi_m(\check{s}), \delta_{\psi(\check{s})} ) \Big( h_m( \hat{s}) - h_m( \check{s}) \Big)\Big\rangle ds. 
\end{align*}
Hence
\begin{align*}
\Big\| \phi(t) - \phi_m(t) \Big\|^2 \leq& 4L\int_0^t \Big\| \phi(s) - \phi_m(s) \Big\|^2 ds
\\
+2n\int_0^t \Big\langle \phi(s)& - \phi_m(s), \Big( \sigma( \check{s}, \phi(\check{s}), \delta_{\psi(\check{s})} ) - \sigma( \check{s}, \phi_m(\check{s}), \delta_{\psi(\check{s})} ) \Big) \cdot \Big( h_m(\hat{s}) - h_m(\check{s}) \Big) ds
\\
+2n \int_0^t \Big\langle \phi(s)& - \phi_m(s), \sigma( \check{s}, \phi(\check{s}), \delta_{\psi( \check{s})} ) 
\cdot 
\Big( (h-h_m)( \hat{s}) - (h-h_m)( \check{s}) \Big\rangle ds. 
\end{align*}

Using the Lipschitz properties of $\sigma$ combined with $n$ being fixed, we get
\begin{align*}
\| \phi - \phi_m \|_{\infty, [0,T]}^2 
\leq&
\Big( 8L + 8n\| h\|_{\infty, [0,T]}\Big) \int_0^t \Big\| \phi(s) - \phi_m(s) \Big\|^2 ds 
\\
&+ 16n^2 \| h-h_m\|_{\infty, [0,T]}^2 \Big( \int_0^T \sigma( \check{s}, \phi(\check{s}), \delta_{\psi(\check{s})} ) ds \Big)^2. 
\end{align*}

As the integral $ \int_0^T \sigma(\check{s}, \phi(\check{s}), \delta_{\psi(\check{s})} ) ds$ will be finite for any choice of $n$ and $h$, we apply Gr\"onwall inequality to conclude
$$
\| \phi - \phi_m \|_{\infty, [0,T]}^2  \lesssim \| h-h_m\|_{\infty, [0,T]}^2. 
$$

\end{proof}

\begin{lemma}
\label{lemma the LDP for Y classical euler reflected sde}
Let $Y^{n,\varepsilon}$ be the solution to \eqref{equation Y classical reflected SDE Euler scheme}. 
Then $Y^{n,\varepsilon}$ satisfies an LDP on the space $C_{x_{0}}([0,T]; \bR^d)$, with a good rate function given by 
\begin{equation}
\label{equation Rate function LDP.}
 I^{n,T}_{x_{0}}( \phi ) 
 \coloneqq \underset{\{h\in \cH_1^0 ~:~H^n(h) = \phi\}}{\inf} I'(h).
\end{equation} 
\end{lemma}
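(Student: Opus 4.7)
The plan is to apply the contraction principle directly. The crucial observation is that the Euler scheme $Y^{n,\varepsilon}$ defined in \eqref{equation Y classical reflected SDE Euler scheme} is realized as $Y^{n,\varepsilon} = H^n[\sqrt{\varepsilon}\,W]$, where $H^n$ is the deterministic map from \eqref{e30}. Indeed, once the continuous driving path (be it Brownian or Cameron--Martin) is fixed, \eqref{equation Y classical reflected SDE Euler scheme} reduces to a deterministic Skorokhod problem of the type covered by Theorem \ref{thm:SkorokhodProblem} applied pathwise on the random-ODE associated to a piecewise-linear interpolation of the increments; uniqueness gives $Y^{n,\varepsilon}(\omega) = H^n[\sqrt{\varepsilon}\,W(\omega)]$ almost surely.

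Given this representation, the proof proceeds in three steps. First, by Schilder's theorem, see \cite{DZ}*{Theorem 5.2.3}, the family of laws of $\sqrt{\varepsilon}\,W$ on $C_0([0,T]; \bR^{d'})$ satisfies a Large Deviation Principle at speed $\varepsilon$ with good rate function $I'$. Second, Lemma \ref{lemma : continuity of the map Hn} established that $H^n : C_0([0,T]; \bR^{d'}) \to C_{x_0}([0,T]; \bR^d)$ is continuous with respect to the supremum norm (for each fixed $n \in \bN$). Third, the contraction principle \cite{DZ}*{Theorem 4.2.1} transfers the LDP for $\sqrt{\varepsilon}\,W$ to an LDP for $Y^{n,\varepsilon} = H^n[\sqrt{\varepsilon}\,W]$, with good rate function
\begin{equation*}
I^{n,T}_{x_0}(\phi) = \inf\bigl\{ I'(h) : h \in C_0([0,T]; \bR^{d'}),\ H^n[h] = \phi \bigr\}.
\end{equation*}
Since $I'(h) = +\infty$ whenever $h \notin \cH_1^0$, this infimum is automatically restricted to $h \in \cH_1^0$, matching the statement. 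The goodness of $I^{n,T}_{x_0}$ is preserved by the contraction principle, which also guarantees that the level sets are compact.

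The only subtle point, and the one requiring genuine verification, is the pathwise identification $Y^{n,\varepsilon} = H^n[\sqrt{\varepsilon}\,W]$. On the interval $[iT/n, (i+1)T/n]$, the driving noise in \eqref{equation Y classical reflected SDE Euler scheme} enters as the term $\sqrt{\varepsilon}\,\sigma(iT/n, Y^{n,\varepsilon}_{iT/n}, \delta_{\psi^{x_0}(iT/n)})(W_{(i+1)T/n} - W_{iT/n})\cdot n(t - iT/n)/T$, whose increment over the sub-interval depends only on the value $W_{(i+1)T/n} - W_{iT/n}$. This is exactly the piecewise-linear interpolation of $\sqrt{\varepsilon}\,W$ evaluated at the grid, which is what $H^n$ computes from its input path. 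The regularity and linear growth conditions on $b$, $f$ (from Assumption \ref{ass:ExistUnique-LocLip-SSMVE} restricted to the deterministic skeleton $\psi^{x_0}$) guarantee this deterministic Skorokhod problem is well-posed pathwise for every realisation $h \in C_0([0,T]; \bR^{d'})$, not merely for $h \in \cH_1^0$, so $H^n$ is defined on the full Wiener space and the contraction principle applies without modification.
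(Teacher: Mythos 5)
Your proof is correct and follows the same route as the paper: invoke Schilder's theorem for $\sqrt{\varepsilon}\,W$ and push forward through the continuous map $H^n$ (Lemma~\ref{lemma : continuity of the map Hn}) via the contraction principle \cite{DZ}*{Theorem 4.2.1}. The paper states this in one line; your additional verification that $Y^{n,\varepsilon}=H^n[\sqrt{\varepsilon}\,W]$ holds pathwise is precisely the point being taken for granted there, so you have filled in the intended detail rather than taken a different path.
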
 
\begin{proof}
The result is a straightforward application of the contraction principle \cite{DZ}*{Theorem 4.2.1} using the continuous map $H^n$ as established in Lemma \ref{lemma : continuity of the map Hn}). 
\end{proof}
Next we use that $Y^{n,\varepsilon}$ is an approximation of $Y^\varepsilon$ in the appropriate sense to obtain an LDP for $Y^\varepsilon$ via \cite{DZ}*{Theorem 4.2.23}. 

\begin{lemma}
\label{lemma : euler scheme is an expo good approximation}
Let $Y^\varepsilon$ be the solution to \eqref{equation Y classical reflected SDE}, and $Y^{n,\varepsilon}$ be the solution to \eqref{equation Y classical reflected SDE Euler scheme}. Then for every $\delta>0$
\begin{align}
\limsup_{n\to\infty} \limsup_{\epsilon \to 0} \epsilon \log \bP \Big[ \sup_{t\in[0,T]} \| Y^{n,\varepsilon}_t-Y^{\varepsilon}_t\| > \delta  \Big]   = -\infty. \label{eq : expo goo approx definition}
\end{align}
That is $Y^{n,\varepsilon}$ is an exponentially good approximation of $Y^\varepsilon$, in the sense of \cite{DZ}*{Definition 4.2.14}.
\end{lemma}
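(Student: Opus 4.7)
The plan is to apply Itô's formula to $\|D_t\|^2$, where $D_t := Y^{n,\varepsilon}_t - Y^\varepsilon_t$, eliminate the reflection contribution via convexity (Lemma \ref{lem:NormalToDomain}), absorb the drift and self-stabilizing differences through the one-sided Lipschitz hypotheses of Assumption \ref{ass:ExistUnique-LocLip-SSMVE}, and reduce the statement to an exponential tail bound on a stochastic integral whose quadratic variation is controlled by the Euler-scheme discretization error. Because $f$ has polynomial growth of order $r$ and $b$ is only locally Lipschitz in space, we must first localize. Define
$$\tau_R := \inf\{t \in [0,T] : \|Y^\varepsilon_t - x_0\| \vee \|Y^{n,\varepsilon}_t - x_0\| > R\}, \qquad R > 0,$$
and split
$$\bP\Big[\sup_{t\in[0,T]}\|D_t\| > \delta\Big] \leq \bP[\tau_R \leq T] + \bP\Big[\sup_{t\leq T\wedge\tau_R}\|D_t\| > \delta\Big].$$
A standard exponential-moment computation on $\|Y^\varepsilon_\cdot - x_0\|^2$ (one-sided Lipschitz plus convexity to drop the reflection, and the exponential martingale inequality applied to the $\sqrt{\varepsilon}$-scaled stochastic integral) yields $\limsup_{\varepsilon \to 0}\varepsilon \log \bP[\tau_R \leq T] \leq -\Lambda(R)$ with $\Lambda(R) \to \infty$; the same bound holds for $Y^{n,\varepsilon}$ uniformly in $n$ since the piecewise-frozen Euler coefficients inherit the affine growth bounds.

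On $\{t \leq T \wedge \tau_R\}$ all nonlinearities become effectively Lipschitz with a constant $L_R$ of polynomial growth in $R$. Setting $\Delta\sigma_s := \sigma(s,Y^\varepsilon_s,\delta_{\psi^{x_0}(s)}) - \sigma(\check s, Y^{n,\varepsilon}_{\check s},\delta_{\psi^{x_0}(\check s)})$ and $M_t := \int_0^t \langle D_s, \Delta\sigma_s\, dW_s\rangle$, Itô's formula and Lemma \ref{lem:NormalToDomain} give
$$\|D_t\|^2 \leq 2L_R \int_0^t \|D_s\|^2 ds + \varepsilon \int_0^t \|\Delta\sigma_s\|^2 ds + 2\sqrt{\varepsilon}\, M_t.$$
Splitting $\Delta\sigma_s$ into time, space and measure increments and invoking Assumption \ref{assumption : Holder regularity of sigma} together with the spatial/measure Lipschitz bounds on $\sigma$ produces
$$\|\Delta\sigma_s\|^2 \lesssim (T/n)^{2\beta} + \|D_s\|^2 + \|Y^{n,\varepsilon}_s - Y^{n,\varepsilon}_{\check s}\|^2 + \|\psi^{x_0}(s) - \psi^{x_0}(\check s)\|^2,$$
where the $\psi^{x_0}$-term is $O(n^{-2})$ by boundedness of the skeleton's velocity on $[0,T]$. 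The path-roughness term decomposes via \eqref{equation Y classical reflected SDE Euler scheme} into a deterministic piece of order $O_R(n^{-2})$ and a stochastic piece dominated by $\varepsilon\, C_R^2 \max_{0\leq k<n}\|W_{(k+1)T/n} - W_{kT/n}\|^2$, which admits sub-Gaussian tails of order $\varepsilon(\log n)/n$ via Borell--TIS concentration.

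Gr\"onwall then yields, on the event $\{\tau_R > T\}$,
$$\sup_{t\leq T\wedge\tau_R}\|D_t\|^2 \leq C_R e^{C_R T}\bigl(\varepsilon \rho_n + 2\sqrt{\varepsilon}\sup_{t\leq T\wedge\tau_R} M_t\bigr),$$
with $\rho_n \to 0$ as $n \to \infty$ outside an event of Brownian probability that is itself exponentially small in $1/\varepsilon$. The quadratic variation satisfies $\langle M\rangle_{T\wedge\tau_R} \leq C_R(\varepsilon + \rho_n)\sup_{s\leq T\wedge\tau_R}\|D_s\|^2$, so the exponential (Bernstein) martingale inequality delivers
$$\bP\Big[\sup_{t\leq T\wedge\tau_R}\|D_t\| > \delta\Big] \leq 2\exp\Big(-\frac{c_{R,\delta,T}}{\varepsilon(1+\rho_n)}\Big)$$
for a constant $c_{R,\delta,T}>0$ independent of $(\varepsilon,n)$. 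Passing to $\varepsilon\log$, and then taking the limits $\varepsilon \to 0$, $n \to \infty$, $R \to \infty$, in that order, each piece of the decomposition contributes $-\infty$, yielding \eqref{eq : expo goo approx definition}.

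\textbf{The main obstacle} is producing exponential (rather than merely $L^2$) tail bounds on the discretization error in the presence of the superlinear self-stabilizing $f$. Localization at radius $R$ neutralizes superlinearity but introduces constants $C_R, L_R$ with polynomial growth in $R$, so the Bernstein exponent $c_{R,\delta,T}$ degrades polynomially; we are therefore \emph{forced} to send $R \to \infty$ \emph{after} the iterated limits in $\varepsilon$ and $n$. This ordering is precisely what the iterated $\limsup$ in \eqref{eq : expo goo approx definition} permits. A subsidiary technical point is that the Euler-scheme roughness $\|Y^{n,\varepsilon}_\cdot - Y^{n,\varepsilon}_{\check\cdot}\|$ must be controlled at the exponential level, not merely in mean; this is handled through Borell--TIS concentration of $\max_k\|W_{(k+1)T/n} - W_{kT/n}\|$ combined with the Bernstein estimate on the resulting quadratic variation.
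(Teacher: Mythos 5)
Your overall architecture matches the paper's: define the difference process, localise at radius $R$, apply Itô's formula together with the convexity of $\cD$ (Lemma \ref{lem:NormalToDomain}) to eliminate the reflection contributions, exploit the one-sided Lipschitz and local Lipschitz structure on the stopped interval, and close with an exponential martingale inequality (the paper bundles this into Lemma \ref{lemma : adapted D+Z lemma 5.6.18}). The parts that differ are precisely the ones that carry the technical weight, and there you have a genuine gap.

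The paper does not attempt to control $\bP[\tau_R\le T]$ for $Y^{\varepsilon}$ or $Y^{n,\varepsilon}$ by a direct exponential-moment argument. Instead it writes $\{\tau_{R+1}<T\}\subseteq\{\tau_R^{Y^n}\le T\}\cup\{\|Z_{\tau_{R+1}}\|\ge 1\}$, dispatches the second event by Lemma \ref{lemma : adapted D+Z lemma 5.6.18} with $\delta=1$, and for the first event invokes the LDP already proved for $Y^{n,\varepsilon}$ in Lemma \ref{lemma the LDP for Y classical euler reflected sde}. The entire second half of the paper's proof is then devoted to showing $\lim_{R\to\infty}\lim_{n\to\infty}\inf_{\{\phi:\sup_t\|\phi(t)\|\ge R\}}I^{n,T}_{x_0}(\phi)=\infty$, which is exactly the rigorous content behind the statement that the Euler scheme leaves $B_R$ with super-exponentially small probability, \emph{uniformly in} $n$. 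You replace this whole block with the single sentence that ``the same bound holds for $Y^{n,\varepsilon}$ uniformly in $n$ since the piecewise-frozen Euler coefficients inherit the affine growth bounds.'' That assertion is the missing idea. The Euler scheme \eqref{equation Y classical reflected SDE Euler scheme} is a random ODE whose ``noise'' contribution over $[\check t,\hat t]$ has slope proportional to $n(W_{\hat t}-W_{\check t})$; this is not an Itô integral and the exponential martingale inequality does not apply to it directly. Establishing a tail bound that is simultaneously exponential in $1/\varepsilon$ and uniform in $n$ is nontrivial — the dependence on $n$ enters through both the amplitude factor $n$ and the number of blocks. The paper's route via the skeleton operator $H^n$ and the rate-function infimum calculation is precisely how that uniformity is made rigorous. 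Without it, the final interchange of limits $\varepsilon\to0$, $n\to\infty$, $R\to\infty$ is unsupported.

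On the other side of the ledger, your explicit treatment of the Euler-roughness term $\|Y^{n,\varepsilon}_t-Y^{n,\varepsilon}_{\check t}\|$ via Borell--TIS identifies a point that the paper's own decomposition of $\sigma_t$ handles loosely: the spatial-Lipschitz step in that estimate produces $L\|Y^{n,\varepsilon}_{\check t}-Y^{\varepsilon}_t\|$, which is not the same as $L\|Z^{\varepsilon}_t\|$ but differs from it by exactly $L\|Y^{n,\varepsilon}_t-Y^{n,\varepsilon}_{\check t}\|$, a random quantity not covered by a deterministic $\rho(n)$. Treating this increment honestly, as you propose, is a legitimate refinement. But it does not compensate for the gap above: you would still need to prove the uniform-in-$n$ exponential non-explosion for the Euler scheme, and that is where the real work lies in the paper's argument.
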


\begin{proof}
The proof makes use of the LDP for $Y^{n,\varepsilon}$ established in Lemma \ref{lemma the LDP for Y classical euler reflected sde}. We follow a similar strategy as \cite{dos2019freidlin}*{Lemma 4.6}, requiring  an adapted version of \cite{DZ}*{Lemma 5.6.18} stated here in Lemma \ref{lemma : adapted D+Z lemma 5.6.18}.

Define the process $Z^\varepsilon \coloneqq Y^\varepsilon-Y^{n,\varepsilon}$, so that 

\begin{equation*}
Z^\varepsilon_t= \int_0^t b_s ds+\int_0^t \sigma_s ds+k^{Y^{n}}_{t}-k^{Y}_{t},
\end{equation*}
where
\begin{align*}
b_t \coloneqq& b\Big(t,Y^\varepsilon_t,\delta_{\psi(t)}\Big)-b\Big(t,Y^{n,\varepsilon}_{t},\delta_{\psi(t)}\Big)+f\Big(Y^\varepsilon_t -\psi(t) \Big)-f\Big(Y^{n,\varepsilon}_{t}-\psi(t)\Big),   
\\
\sigma_t \coloneqq& \sigma\Big(t,Y^\varepsilon_t,\delta_{\psi(t)}\Big)-
\sigma\Big(\check{t},Y_{\check{t}}^{n,\varepsilon},\delta_{\psi(\check{t})}\Big).
\end{align*}
Next we define the stopping time
\begin{equation*}
\tau_{R+1} \coloneqq \min \Big\{ T, \inf\{t\geq 0 : \|Y^{\varepsilon}_t \| \geq R+1\}, \inf\{t\geq 0:\|Y^{n,\varepsilon}_{t} \| \geq R+1 \} \Big\}.
\end{equation*}
Note that for $t\in[0,\tau_{R+1}]$ by the local Lipschitz property of $b$ and $f$, we have 
\begin{align*}
\|b_t\| \leq&  L_{R}\| Z^\varepsilon_t \|,
\end{align*}
for a constant $L_R$ only depending on $R$. Also note that 
\begin{align*}
\|\sigma_t\|\leq& \Big\| \sigma\Big( t, Y_t^\varepsilon ,\delta_{\psi(t)}\Big)
-
\sigma\Big(\check{t}, Y_t^\varepsilon  , \delta_{\psi(t)} \Big) \Big\| 
+ 
\Big\| \sigma\Big(\check{t}, Y_{\check{t}}^{n,\varepsilon}, \delta_{\psi(t)}\Big )
-
\sigma\Big(\check{t}, Y_t^\varepsilon, \delta_{\psi(t)} \Big) \Big\| 
\\
&+
\Big\| \sigma\Big(\check{t}, Y_{\check{t}}^{n,\varepsilon}, \delta_{\psi(\check{t})} \Big)
- 
\sigma\Big(\check{t}, Y_{\check{t}}^{n,\varepsilon}, \delta_{\psi(t)} \Big)\Big\|
\\
\leq&
L\Big( \| t-\check{t}\|^\beta + \| Z_t^\varepsilon  \| + \| \psi(t) - \psi(\check{t}) \| \Big) 
\\
\leq&
M(\rho(n)+\|Z_t\|),
\end{align*}
for some $M$ large enough, and $\rho(n)  \underset{n \to \infty }{\to} 0 $. Thus the conditions of Lemma $\ref{lemma : adapted D+Z lemma 5.6.18}$ are satisfied. Now fix any $\delta>0$ and notice that
\begin{align*}
\Big\{ \sup_{t\in[0,T]} \| Y^\varepsilon_t-Y^{n,\varepsilon}_{t}  \| \geq \delta \Big\} \subseteq& \Big\{ \sup_{t\in[0,\tau_{R+1}]} \| Y^\varepsilon_t-Y^{n,\varepsilon}_{t}  \| \geq \delta, \tau_{R+1} = T \Big\} 
\cup \Big\{ \sup_{t\in[0,T]} \| Y^\varepsilon_t-Y^{n,\varepsilon}_{t}  \| \geq \delta, \tau_{R+1}< T \Big\}
\\
\subseteq& \Big\{ \sup_{t\in[0,\tau_{R+1}]} \| Y^\varepsilon_t-Y^{n,\varepsilon}_{t}  \| \geq \delta \Big\}
\cup \Big\{ \tau_{R+1}< T \Big\}.
\end{align*}
By Lemma \ref{lemma : adapted D+Z lemma 5.6.18} we know that
\begin{equation*}
\lim_{n\to\infty} \limsup_{\varepsilon\to 0} \varepsilon \log \Big( \bP\Big[  \sup_{t\in[0,\tau_{R+1}]} \| Y^\varepsilon_t-Y^{n,\varepsilon}_{t}  \| \geq \delta \Big] \Big)=-\infty.
\end{equation*}
Furthermore define $\tau^{Y_{n}}_{R}=\inf\{t\geq0: \|Y^{n,\varepsilon}_{t}\| \geq R \} $, and notice
\begin{align*}
\Big\{ \tau_{R+1}< T \Big\} \subseteq& \Big\{  \tau_{R+1}< T, \tau^{Y^{n}}_{R}\leq T  \Big\} 
\cup \Big\{ \tau_{R+1}< T, \tau^{Y^{n}}_{R}> T  \Big\}
\\
\subseteq& \Big\{\tau^{Y^{n}}_{R}\leq T  \Big\} 
\cup \Big\{ \| Y_{\tau_{R+1}}^\varepsilon-Y_{\tau_{R+1}}^{n,\varepsilon}  \|\geq 1 \Big\}. 
\end{align*}
Again, by Lemma \ref{lemma : adapted D+Z lemma 5.6.18} and setting $ \delta=1$ we have that 
\begin{equation*}
\lim_{n\to\infty} \limsup_{\varepsilon\to 0} \varepsilon \log \Big( \bP\Big[ \sup_{t\in[0,\tau_{R+1}]} \| Y^\varepsilon_t-Y^{n,\varepsilon}_{t} \| \geq 1 \Big] \Big)=-\infty.
\end{equation*}
Recalling the identity, for positive $\alpha_\varepsilon,\beta_{\varepsilon}$
\begin{equation*}
\limsup_{\varepsilon\to 0}\varepsilon \log \Big( \alpha_{\varepsilon}+\beta_{\varepsilon} \Big)= \limsup_{\varepsilon \to 0}\varepsilon \log \Big( \max\Big\{ \alpha_{\varepsilon},\beta_{\varepsilon} \Big\} \Big), 
\end{equation*}
and appealing to the LDP satisfied by $Y^{n,\varepsilon}$, we are left with
\begin{align*}
\lim_{n\to\infty} \limsup_{\varepsilon \to 0} \varepsilon \log \Big( \bP\Big[ \sup_{t\in[0,T]} \|Y_t^\varepsilon-Y^{n,\varepsilon}_{t} \|\geq \delta \Big] \Big)
\leq& 
\lim_{n\to\infty} \limsup_{\varepsilon \to 0} \varepsilon \log \Big( \bP\Big[ \sup_{t\in[0,T]}\|Y^{n,\varepsilon}_{t} \|\geq R \Big] \Big)
\\
\leq& \lim_{n\to\infty} - 
    \underset{\phi \in C_{x_0}([0,T];\bR^d) : \sup_{t\in[0,T]}\|\phi(t)\|\geq R }{\inf} ~~ I^{n,T}_{x_{0}}(\phi).
\end{align*}
Hence to conclude \eqref{eq : expo goo approx definition} we show that
\begin{align} 
\lim_{R\to \infty} \lim_{n\to \infty} \underset{\phi \in C_{x_0}([0,T];\bR^d) : \sup_{t\in[0,T]}\|\phi(t)\|\geq R }{\inf} ~~ I^{n,T}_{x_{0}}(\phi) = \infty. \label{eq : what needs t be shown limit}
\end{align}

Indeed, let $\phi\in C_{x_{0}}([0,T]; \bR^d)$ be such that $\sup_{s\in[0,T]}\|\phi(s)\| \geq R$. Let $h \in \cH^{0}_1$ be a function such that $H^n[h] = \phi$, recall that if $h\notin \cH^0_1$ we immediately have that $I'(h)=\infty$. Via a concatenation argument it is simple to show that we can assume the path $\phi$ is increasing on $[0,T]$. Assuming $\phi$ is increasing we have $\forall s_1\leq s_2$ the bound
\begin{align}
\|\phi(s_1)-x_0\|\leq& 3\|\phi(s_2)-x_0\|+2\|x_0\|.\label{eq : increasing in inf.}
\end{align}   

Note that

 \begin{align*}
\|\phi(t) - x_{0}\|^2 
=&  2\int_0^t \Big\langle \phi(s)-x_0 , b(s,\phi(s),\delta_{\psi(s)})+f(\phi(s)-\delta_{\psi(s)})\Big\rangle ds
\\
&+  \int_{0}^{t} \Big \langle \phi(s)-x_0, \sigma(\check{s},\phi(\check{s}),\delta_{\psi(\check{s})})\frac{n}{T} \Big(h(\hat{s})-h(\check{s})\Big)\Big\rangle ds
\\
&- 2\int_0^t \Big\langle \phi(s)-x_{0},\mathbf{n}(\phi(s)) \Big\rangle |k^{h,n}|_s.
\end{align*}
By Cauchy–Schwarz and the one-sided Lipschitz properties of $b$ and $f$ we can bound the drift term by

\begin{align*}
\Big\langle& \phi(s)-x_0 , b(s,\phi(s),\delta_{\psi(s)})+f(h(s)-\delta_{\psi(s)})\Big\rangle
\\
&\leq 2(L+2)\|\phi(s)-x_0\|^2+2\|f(x_0-\delta_{\psi(s)}) \|^2+2\| b(s,x_0,\delta_{\psi(s)}) \|^2.
\end{align*}
Using this bound, the integrability conditions of $f$ and $b$, and Lemma \ref{lem:NormalToDomain} we have for a constant $c_1=c_1(L,x_0)$ independent of $t$
\begin{align}
\nonumber
\|\phi(t)& - x_{0}\|^2 
=  c_1\Big(1+\int_0^t \| \phi(s)-x_0 \|^2ds\Big)
\\
\label{e110}
&+ \int_{0}^{t} \Big \langle \phi(s)-x_0, \sigma (\check{s},\phi(\check{s}),\delta_{\check{s})})\frac{n}{T} \Big(h(\hat{s})-h(\check{s})\Big)\Big\rangle ds. 
\end{align}
We can further bound the above term by noting that for any vector $a\in \mathbb{R}^d$,

\begin{align*}
\Big\langle \phi(s)-x_0,\sigma(\check{s},\phi(\check{s}),  \delta_{\check{s})}) a \Big\rangle 
\leq&
L \| \phi(s)-x_0 \| \| \phi(\check{s})-x_0 \| \|a\|
\\
&+ 
\| \phi(s)-x_0 \| \| \sigma(\check{s},x_0,\delta_{\psi(\check{s})}) \| \|a\|.
\end{align*}

Since $\check{s}\leq s $ employing \eqref{eq : increasing in inf.}, and $c<c^2+1$ for $c\in \bR$,we have for a constant $c_2=c_2(L,x_0)$ independent of $t$, $n$
\begin{align*}
\Big\langle \phi(s)-x_0,\sigma(\check{s},\phi(\check{s}),  \delta_{\psi(\check{s})}) a \Big\rangle 
\leq& c_2 \Bigg( \| \phi(s)-x_0 \|^2 \Big( \|a\| +  \|\sigma(\check{s},x_0,\delta_{\psi(\check{s})})\|\|a\| \Big) 
\\
&+ \|a\|+ \| \sigma(\check{s},x_0,\delta_{\psi(\check{s})}) \| \|a\| \Bigg).
\end{align*}
Setting 
$$
a= \frac{n}{T} \Big(h(\hat{s})-h(\check{s})\Big)=\frac{n}{T}\int_{\check{s}}^{\hat{s}}\dot{h}(u)du,
$$
and substituting this bound into \eqref{e110}, we get that for a constant $c=c(L,x_0)$ independent of $t$ or $n$
\begin{align}
\|\phi(t) - x_{0}\|^2 
\leq & c\Bigg(\int_0^t \Big\| \frac{n}{T}\int_{\check{s}}^{\hat{s}}\dot{h}(u)du \Big\| + \Big\| \sigma(\check{s},x_0,\delta_{\psi(\check{s})}) \Big\|  \Big\| \frac{n}{T} \int_{\check{s}}^{\hat{s}}\dot{h}(u)du \Big\| ds \label{e120}
\\
&+
\int_0^t \|\phi(s)-x_0\|^2 \Big( 1+ \Big\| \frac{n}{T} \int_{\check{s}}^{\hat{s}}\dot{h}(u)du \Big\| + \Big\| \sigma(\check{s},x_0,\delta_{\psi(\check{s})}) \Big\| \Big\| \frac{n}{T} \int_{\check{s}}^{\hat{s}}\dot{h}(u)du \Big\| \Big)ds \Bigg). \nonumber
\end{align}
Also note that we have
$$
\frac{n}{T}\int_0^t\int_{\check{s}}^{\hat{s}}\|\dot{h}(u)\| du ds \leq \int_0^{T}\| \dot{h}(s)\| ds, 
$$
and similarly
\begin{align*}
\frac{n}{T}\int_0^t\| \sigma(\check{s},x_0,\delta_{\psi(\check{s})}) \| \int_{\check{s}}^{\hat{s}}\|\dot{h}(u)\| du ds=& \frac{n}{T}\int_0^t \int_{\check{s}}^{\hat{s}}\| \sigma(\check{s},x_0,\delta_{\psi(\check{s})}) \|\|\dot{h}(u)\| du ds  \\
\leq& \int_0^T \| \sigma(\check{s},x_0,\delta_{\psi(\check{s})}) \|\|\dot{h}(s)\| ds.
\end{align*}

By applying to Gr\"onwall's Inequality in \eqref{e120}, and using the previous two observations, we have

\begin{align*}
\|\phi(t) - x_{0}\|^2  \leq c&\Bigg( \int_0^{T}\| \dot{h}(s)\|  + \| \sigma(\check{s},x_0,\delta_{\psi(\check{s})}) \|\|\dot{h}(s)\| ds 
\\
& \cdot \exp\Big(c \int_0^{T}1+\| \dot{h}(s)\|  + \| \sigma(\check{s},x_0,\delta_{\psi(\check{s})}) \|\|\dot{h}(s)\| ds \Big) \Bigg).
\end{align*}

Now adding and subtracting the terms $\| \sigma(s,x_0,\delta_{\psi(\check{s})} ) \|,\| \sigma(\check{s},x_0,\delta_{\psi(s)} ) \|$, using the Triangle Inequality, Cauchy-Schwarz's inequality, the continuity of $\psi$, and recalling the Assumption \ref{assumption : Holder regularity of sigma} we obtain \eqref{eq : what needs t be shown limit}.
\end{proof}
\begin{lemma}
\label{lemma the LDP for Y classical reflected sde}
Let $Y^\varepsilon$ be the solution to \eqref{equation Y classical reflected SDE}. Then $Y^\varepsilon$ satisfies an LDP on the space $C_{x_0}([0,T];\bR^d)$ with the good rate function 
\begin{equation}\label{eq : the rate function for our self-stab. reflec. sde}
I^T_{x_{0}}(\phi)=\inf_{\{h\in \cH_1^0 ~:~ H[h]=\phi\}} I'(h),
\end{equation}
where the skeleton operator $H$ was defined in \eqref{eq:SkeletonProcess-h}.
\end{lemma}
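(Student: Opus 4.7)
The plan is to apply the transfer-of-LDPs principle through exponentially good approximations, namely \cite{DZ}*{Theorem 4.2.23}: combining the LDP for the Euler scheme $Y^{n,\varepsilon}$ from Lemma \ref{lemma the LDP for Y classical euler reflected sde} with the exponentially good approximation property established in Lemma \ref{lemma : euler scheme is an expo good approximation} produces a weak LDP for $Y^{\varepsilon}$ on $C_{x_0}([0,T];\bR^d)$ with rate function
\begin{equation*}
\tilde{I}(\phi) \;=\; \sup_{\delta>0}\,\liminf_{n\to\infty}\, \inf_{\psi\in B(\phi,\delta)} I^{n,T}_{x_0}(\psi).
\end{equation*}
To upgrade this to a full LDP I would verify exponential tightness of $\{Y^{\varepsilon}\}_{\varepsilon>0}$ in $C_{x_0}([0,T];\bR^d)$ via standard Bernstein/BDG exponential bounds on the modulus of continuity of the stochastic integral term, together with a priori moment bounds in the spirit of Proposition \ref{prop:SSMVE-Moments}; the reflection contributes only negatively to pairwise distances by Lemma \ref{lem:NormalToDomain} and cannot spoil tightness. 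It then remains to identify $\tilde{I}$ with $I^T_{x_0}$ through a two-sided inequality.

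For the upper bound $\tilde{I}(\phi)\le I^T_{x_0}(\phi)$, fix $h\in\cH_1^0$ with $H[h]=\phi$ and establish that $\|H^n[h]-H[h]\|_{\infty,[0,T]}\to 0$ as $n\to\infty$. This is a deterministic Euler-type convergence statement: the discrepancy between \eqref{e30} evaluated at the smooth input $h$ and \eqref{eq:SkeletonProcess-h} is controlled using the H\"older continuity of $\sigma$ in time (Assumption \ref{assumption : Holder regularity of sigma}), the absolute continuity of $h$, the one-sided Lipschitz properties of $b$ and $f$, and the convexity inequality of Lemma \ref{lem:NormalToDomain} applied to the reflection increments; Gr\"onwall closes the estimate. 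Setting $\phi^n := H^n[h]$ yields $\phi^n\to\phi$ uniformly, so $\inf_{\psi\in B(\phi,\delta)} I^{n,T}_{x_0}(\psi)\le I^{n,T}_{x_0}(\phi^n)\le I'(h)$ for $n$ large. Passing to the liminf in $n$, then $\sup_{\delta>0}$ and then the infimum over admissible $h$ gives the claimed bound.

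For the lower bound $\tilde{I}(\phi)\ge I^T_{x_0}(\phi)$, assume $\tilde{I}(\phi)<\infty$ and use a diagonal extraction to produce $\phi^n\to\phi$ uniformly and $h^n\in\cH_1^0$ with $H^n[h^n]=\phi^n$ and $\sup_n I'(h^n)<\infty$. Weak compactness of bounded sets in the Hilbert space $\cH_1^0$ provides a subsequence $h^n\rightharpoonup h$, and weak lower semi-continuity of $I'$ yields $I'(h)\le \tilde{I}(\phi)$. The main obstacle is then to verify that $H[h]=\phi$, i.e.\ that $H^n[h^n]\to H[h]$ uniformly: the critical difficulty is passing to the limit in the discretised product $\sigma(\check{s},H^n[h^n](\check{s}),\delta_{\psi(\check{s})})\cdot\dot{h}^n(s)$, where only $\dot{h}^n\rightharpoonup\dot{h}$ weakly in $L^2$. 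The strategy is an Arzel\`a--Ascoli extraction for $\{H^n[h^n]\}$: the bound $\sup_n I'(h^n)<\infty$ forces uniform $1/2$-H\"older regularity of the driving input, and the Lipschitz continuity of the Skorokhod map on convex domains \cite{tanaka2002stochastic} transfers this equicontinuity to the reflected paths, so along a subsequence $H^n[h^n]$ converges uniformly; consequently the $\sigma$-factor converges strongly in $L^2$, and a weak-times-strong argument identifies the limit of the integral. A final Gr\"onwall estimate exploiting the one-sided Lipschitz properties of $b$, $f$ and $\sigma$ identifies this limit as $H[h]$, yielding $\tilde{I}(\phi)\ge I'(h)\ge I^T_{x_0}(\phi)$ and concluding the proof.
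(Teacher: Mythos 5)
You are applying the right tool but not taking full advantage of it, which makes your argument substantially longer than it needs to be. The paper's proof and yours both hinge on proving the deterministic uniform convergence
\[
\lim_{n\to\infty}\sup_{\{h\in\cH_1^0\,:\,\|h\|_{\cH_1^0}\leq\alpha\}}\|H^n[h]-H[h]\|_{\infty,[0,T]}=0\qquad\forall\alpha<\infty,
\]
which you derive inside your ``upper bound'' step. But once this is established, \cite{DZ}*{Theorem 4.2.23} (the extended contraction principle) \emph{directly} yields the full LDP for $Y^\varepsilon$ with the good rate function $I^T_{x_0}(\phi)=\inf_{\{h:H[h]=\phi\}}I'(h)$, given (a) Schilder's LDP for Brownian motion, (b) continuity of $H^n$ (Lemma \ref{lemma : continuity of the map Hn}), and (c) exponential goodness of $\{Y^{n,\varepsilon}\}$ (Lemma \ref{lemma : euler scheme is an expo good approximation}). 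There is no need to first pass through a weak LDP with the abstract rate $\tilde{I}(\phi)=\sup_\delta\liminf_n\inf_{B(\phi,\delta)}I^{n,T}_{x_0}$, no need for a separate exponential tightness argument, and no need for the lower-bound identification via weak $L^2$ compactness of $h^n$, Arzel\`a--Ascoli extraction, and a weak-times-strong limit. What you describe is closer in spirit to \cite{DZ}*{Theorem 4.2.16} plus a by-hand identification of its rate function, whereas Theorem 4.2.23 bundles the identification into its hypothesis (uniform convergence of $H^n\to H$ on $I'$-level sets). Your lower-bound argument is plausible but has a soft spot: you invoke Lipschitz continuity of the Skorokhod map on convex domains to transfer equicontinuity from $h^n$ to $H^n[h^n]$, but the $H^n[h^n]$ are solutions of reflected ODEs with drift, not direct Skorokhod images of $h^n$, so the equicontinuity transfer needs a short additional Gr\"onwall estimate that you do not spell out. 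None of this is needed if you use the extended contraction principle as stated.
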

\begin{proof} The proof will follow by appealing to \cite{DZ}*{Theorem 4.2.23}. That is we need to show that for every $\alpha>0$  
\begin{equation}\label{e1}
\lim_{n\to\infty}\sup_{\{h\in \cH^0_1  ~:~ \|h\|_{\cH^0_1}<\alpha\}}\|H^n[h]-H[h]\|=0.
\end{equation}
Fix $\alpha<\infty$, $h\in \cH^0_1 $ with $\|h\|_{\cH^0_1}<\alpha$. Denote $\phi^n=H^n(h)$, $\phi=H(h)$. Now by the one-sided Lipschitz property of the drift and Lemma \ref{lem:NormalToDomain}, 
\begin{align}
\nonumber
\|\phi^n(t)-\phi(t)\|^2 \leq&  2
\int_0^t  \Big\langle \phi^n(s)-\phi(s), \sigma(\check{s},\phi^n(\check{s}),\delta_{\psi(\check{s})}h_n(s)
\\
\label{e111}
&-\sigma\Big(s,\phi(s),\delta_{\psi(s)}\Big)\dot{h}(s) \Big\rangle ds  + \int_0^t 4L\|\phi^n(s)-\phi(s)\|^2ds,
\end{align}
where we have denoted  $h_n(s)\coloneqq \frac{n}{T} \Big( h( \hat{s} ) - h( \check{s})\Big) $. Next notice that 
\begin{align*}
\Big\|  \sigma(\check{s},\phi^n(\check{s}),\delta_{\psi(\check{s})})-\sigma(s,\phi(s),\delta_{\psi(s)}) \Big\|
\leq& 
\Big\| \sigma(\check{s},\phi^n(\check{s}),\delta_{\psi(\check{s})})-\sigma(s,\phi^n(\check{s}),\delta_{\psi(\check{s})}) \Big\|
\\
&+ \Big\| \sigma(s,\phi^n(\check{s}),\delta_{\psi(\check{s})})-\sigma(s,\phi^n(\check{s}),\delta_{\psi(s)}) \Big\|
\\
&+\Big\| \sigma(s,\phi^n(\check{s}),\delta_{\psi(s)})-\sigma(s,\phi(s),\delta_{\psi(s)})  \Big\| 
\\
\leq& \rho^n(s)+L\|\phi^n(s)-\phi(s)\|,
\end{align*}
where $\sup_{s\in[0,T]}\rho^n(s)\underset{n\to\infty}{\to} 0$, by continuity of $\psi$ and the Assumption \ref{assumption : Holder regularity of sigma}. Hence
\begin{align*}
\Big\|  \sigma&(\check{s},\phi^n(\check{s}),\delta_{\psi(\check{s})})h_n(s)-\sigma\Big(s,\phi(s),\delta_{\psi(s)}\Big)\dot{h}(s) \Big\|
\\
\leq& (\rho^n(s)+L\|\phi^n(s)-\phi(s)\|)\| h_n(s) \|+\| \sigma(s,\phi(s),\delta_{\psi(s)}) \| \| \dot{h}(s)-h_n(s) \|.
\end{align*}
Substituting this bound into \eqref{e111} and applying Gr\"onwall 
we get that for a constant $c$ independent of $n$ or $t$,
\begin{align*}
&\|\phi^n(t)-\phi(t)\|^2\leq c\exp\Bigg( c\int_0^t 1 + (\rho^n(s)+1)\|h_n(s)\| + \|\sigma(s,\phi(s),\delta_{\psi(s)} ) \| \cdot \| \dot{h}(s)-h_n(s) \|ds \Bigg) 
\\
&\qquad \cdot \int_0^t (\rho^n(s)+1)\|h_n(s)\| + \|\sigma(s,\phi(s),\delta_{\psi(s)} ) \| \cdot \|\dot{h}(s)-h_n(s)\|ds 
\\
&\leq c\exp\Bigg( c\int_0^t 1+ (\rho^n(s)+1) \cdot (\|\dot{h}(s)\|+\|h_n(s)-\dot{h}(s)\|) + \| \sigma(s,\phi(s),\delta_{\psi(s)})\| \cdot \|\dot{h}(s)-h_n(s)\|ds \Bigg) 
\\
&\qquad \cdot \int_0^t (\rho^n(s)+1)\|\dot{h}(s)\|
+(\rho^n(s)+1)\|h_n(s)-\dot{h}(s)\|
+\| \sigma(s,\phi(s),\delta_{\psi(s)})\|\cdot \|\dot{h}(s)-h_n(s)\|ds.
\end{align*}
Applying Cauchy–Schwarz on the $\|\sigma(s,\phi(s),\delta_{\psi(s)})\| \cdot \|\dot{h}(s)-h_n(s) \|$ terms and sending $n\to \infty$ gives \eqref{e1}. The LDP for $Y^{\epsilon}$ with rate function \eqref{eq : the rate function for our self-stab. reflec. sde} now follows by appealing to \cite{DZ}*{Theorem 4.2.23} and the fact that $Y^{n,\varepsilon}$ are exponentially good approximations of  $Y^\varepsilon$ Lemma \ref{lemma : euler scheme is an expo good approximation}.
\end{proof}

\subsection{Freidlin-Wentzell results for reflected McKean-Vlasov equations}
Next we pass the LDP from the process $Y^\varepsilon$ to $X^\varepsilon$ using exponential equivalence.
\begin{theorem}
\label{ldp for xi}
Let $x_0^\varepsilon \in \mathbb{R}^d$, converge to $x_0\in \mathbb{R}^d$ as $\varepsilon \to 0$. Let $Y^\varepsilon$ be the solution to \eqref{equation Y classical reflected SDE}, $\psi^{x_0}$ the solution of \eqref{eq:SkeletonProcess-0}, and $X^{\varepsilon}$ be the solution to Equation \eqref{eq:MVSS-LDP} started at $X^\varepsilon_0= x^\varepsilon_0$. Then the reflected McKean-Vlasov equation $X^{\varepsilon}$ satisfies an LDP on $C_{x_{0}}([0,T]; \bR^d)$ with rate function  \eqref{eq : the rate function for our self-stab. reflec. sde}. 
\end{theorem}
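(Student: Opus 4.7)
The plan is to deduce the LDP for $X^\varepsilon$ from the LDP for $Y^\varepsilon$ already proved in Lemma \ref{lemma the LDP for Y classical reflected sde} by means of the exponential equivalence criterion \cite{DZ}*{Theorem 4.2.13}. Since both limiting rate functions coincide, it suffices to show that for every $\delta>0$,
\begin{equation*}
\limsup_{\varepsilon\to 0} \varepsilon \log \bP\Big[ \|X^\varepsilon - Y^\varepsilon\|_{\infty,[0,T]} > \delta\Big] = -\infty.
\end{equation*}
To absorb the shift of initial condition from $x_0^\varepsilon$ to $x_0$, I would first introduce an auxiliary process $\tilde Y^\varepsilon$ solving the same equation \eqref{equation Y classical reflected SDE} as $Y^\varepsilon$ but started at $x_0^\varepsilon$; a direct pathwise It\^o/Gr\"onwall argument, together with $\|x_0^\varepsilon - x_0\|\to 0$, yields exponential equivalence between $\tilde Y^\varepsilon$ and $Y^\varepsilon$. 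The remaining and substantive step is to establish exponential equivalence between $X^\varepsilon$ and $\tilde Y^\varepsilon$.

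For this core step I would apply It\^o's formula to $\|X^\varepsilon_t-\tilde Y^\varepsilon_t\|^2$. The drift difference is split as
\begin{equation*}
b(s,X^\varepsilon_s,\mu^\varepsilon_s)-b(s,\tilde Y^\varepsilon_s,\delta_{\psi^{x_0}(s)}) = \big[b(s,X^\varepsilon_s,\mu^\varepsilon_s)-b(s,\tilde Y^\varepsilon_s,\mu^\varepsilon_s)\big] + \big[b(s,\tilde Y^\varepsilon_s,\mu^\varepsilon_s)-b(s,\tilde Y^\varepsilon_s,\delta_{\psi^{x_0}(s)})\big],
\end{equation*}
and analogously for $f\ast\mu^\varepsilon_s(X^\varepsilon_s)-f(\tilde Y^\varepsilon_s-\psi^{x_0}(s))$ and for $\sigma$. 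The one-sided Lipschitz bound on $b$ and $f$ and the Lipschitz bound on $\sigma$ (in the spatial variable) produce terms of Gr\"onwall type in $\|X^\varepsilon_s-\tilde Y^\varepsilon_s\|^2$, the reflective inner-product terms $\langle X^\varepsilon_s-\tilde Y^\varepsilon_s, dk^\varepsilon_s - dk^{\tilde Y}_s\rangle$ are non-positive by the convexity of $\cD$ and Lemma \ref{lem:NormalToDomain}, while the remaining error terms involve $\bW_\cD^{(2)}(\mu^\varepsilon_s,\delta_{\psi^{x_0}(s)})$, which by \eqref{equation X goes to skeleton} is deterministically $O(\sqrt{\varepsilon})$.

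The main obstacle is that $f$ has only polynomial growth and is not Lipschitz in measure, so the cross terms containing $f\ast\mu^\varepsilon_s(X^\varepsilon_s)-f(\tilde Y^\varepsilon_s-\psi^{x_0}(s))$ carry factors of $(1+\|X^\varepsilon_s\|^{r-1}+\|\tilde Y^\varepsilon_s\|^{r-1})$ that preclude a direct use of the exponential (Bernstein) martingale inequality at speed $1/\varepsilon$. I would handle this via localization: define
\begin{equation*}
\tau^\varepsilon_R := \inf\Big\{t\in[0,T]: \|X^\varepsilon_t - x_0\|\vee\|\tilde Y^\varepsilon_t - x_0\|\geq R\Big\}\wedge T,
\end{equation*}
and split the event $\{\|X^\varepsilon-\tilde Y^\varepsilon\|_{\infty,[0,T]}>\delta\}$ into its intersection with $\{\tau^\varepsilon_R=T\}$ and with $\{\tau^\varepsilon_R<T\}$. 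On the former, the coefficients of the difference equation are bounded and Lipschitz, so a Gr\"onwall estimate on the localized It\^o expansion combined with the exponential martingale inequality on the stochastic integral difference produces a bound of order $\exp(-c\delta^2/\varepsilon)$. For the latter, the contribution of $\{\tau^\varepsilon_R<T\}$ is controlled in two pieces: $\bP[\sup_{t\in[0,T]}\|\tilde Y^\varepsilon_t-x_0\|\geq R]$ decays at LDP speed thanks to the good rate function of Lemma \ref{lemma the LDP for Y classical reflected sde} and the argument \eqref{eq : what needs t be shown limit}, while $\bP[\sup_{t\in[0,T]}\|X^\varepsilon_t-x_0\|\geq R]$ is controlled by a further exponential equivalence step using the same localization bootstrap (and the a priori moment control of Proposition \ref{prop:SSMVE-Moments}). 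Taking $R=R(\varepsilon)\to\infty$ sufficiently slowly, both contributions vanish at logarithmic-in-$\varepsilon$ scale, yielding exponential equivalence and thus the claimed LDP with rate function \eqref{eq : the rate function for our self-stab. reflec. sde}.
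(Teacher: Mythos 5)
Your overall strategy — transfer the LDP from $Y^\varepsilon$ to $X^\varepsilon$ via exponential equivalence, split the drift and diffusion discrepancies by triangle inequality, discard the reflective cross-term using Lemma \ref{lem:NormalToDomain}, control the measure error via \eqref{equation X goes to skeleton} and \eqref{equation convergence of law to the Diract path}, and localize to handle the locally Lipschitz, polynomially growing coefficients — is the same as the paper's. The introduction of the intermediate process $\tilde Y^\varepsilon$ started at $x_0^\varepsilon$ is unnecessary (the paper compares $X^\varepsilon$ and $Y^\varepsilon$ directly, the initial offset $z_0 = x_0^\varepsilon - x_0 \to 0$ being absorbed by Lemma \ref{lemma : adapted D+Z lemma 5.6.18}) but harmless.

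The genuine gap is in your treatment of the localization-failure event. You split $\{\tau^\varepsilon_R < T\}$ into the exit of $\tilde Y^\varepsilon$ from $B_R$ (controlled by the LDP for $Y^\varepsilon$) and the exit of $X^\varepsilon$ from $B_R$, and propose to control $\bP\big[\sup_t\|X^\varepsilon_t - x_0\|\geq R\big]$ by ``a further exponential equivalence step using the same localization bootstrap and the a priori moment control of Proposition \ref{prop:SSMVE-Moments}.'' This is circular: any exponential-equivalence argument for $X^\varepsilon$ would require precisely the bound you are trying to establish, while Proposition \ref{prop:SSMVE-Moments} yields only $L^p$ moment bounds, hence by Markov only polynomial decay $R^{-p}$, not decay at large-deviation speed. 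Choosing $R(\varepsilon)\to\infty$ fast enough to force $\varepsilon\log(R(\varepsilon)^{-p})\to-\infty$ requires $R(\varepsilon)$ to grow superpolynomially in $1/\varepsilon$, but then the $R$-dependent local Lipschitz constants $L_R$ and the $R^{r-1}$ growth from the self-stabilizing term enter your Gr\"onwall/exponential-martingale estimate through $B_R$ and $M_R$, and the localized bound no longer tends to $-\infty$.

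The paper sidesteps any estimate on $\bP[\sup\|X^\varepsilon\|\geq R]$ entirely. Its key observation, with $\tau_{R+1}$ the \emph{joint} exit time of $X^\varepsilon$ and $Y^\varepsilon$ from $B_{R+1}(0)$ and $\tau^Y_R$ the exit time of $Y^\varepsilon$ from $B_R(0)$, is the inclusion
\begin{equation*}
\{\tau_{R+1} < T\} \subseteq \{\tau^Y_R\leq T\} \cup \Big\{\sup_{t\le\tau_{R+1}}\|X^\varepsilon_t - Y^\varepsilon_t\|\geq 1\Big\}.
\end{equation*}
Indeed if $Y^\varepsilon$ has not left $B_R$ by time $\tau_{R+1}<T$, then it is $X^\varepsilon$ that has exited $B_{R+1}$, so at $\tau_{R+1}$ the two paths differ by more than $1$. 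The second event is then handled by the localized Lemma \ref{lemma : adapted D+Z lemma 5.6.18} with $\delta=1$ (both processes remain in $B_{R+1}$ up to $\tau_{R+1}$ by construction), and the first by the LDP for $Y^\varepsilon$. Crucially, $R$ stays fixed while $\varepsilon\to 0$, and only afterwards is $R\to\infty$ taken — invoking the goodness of the rate function and the divergence \eqref{eq : what needs t be shown limit} — so the $R$-dependent constants never contaminate the $\varepsilon$-limit. You should replace your circular step with this inclusion.
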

\begin{proof} 
Firstly, one can quickly verify that $\|\psi^{x^\varepsilon_0} (t)-\psi^{x_0}(t) \|\underset{\varepsilon \to 0}{\to} 0$. Let $Z_t^\varepsilon\coloneqq X_t^\varepsilon-Y_t^\varepsilon$. Then $Z^\varepsilon$ satisfies
\begin{equation*}
Z^\varepsilon_t=z_0+ \int_0^t b_s ds+\int_0^t \sigma_s ds+k^{Y,\varepsilon}_{t}-k^{\varepsilon}_{t},
\end{equation*}
where $ z_0 \coloneqq  x_0^\epsilon-x_0$, $\sigma_t \coloneqq \sigma\big(t,X^\varepsilon_t,\mu_t^\varepsilon \big)-\sigma\big(t,Y^\varepsilon_t,\delta_{\psi^{x_{0}}(t)}\big)$ and 
\begin{align*}
b_t \coloneqq
& 
 b\Big(t,X^\varepsilon_t,\mu_t^\varepsilon \Big)
-b\Big(t,Y^\varepsilon_t,\delta_{\psi^{x_{0}}(t)}\Big)
+\int_{\mathbb{R}^d} f(X^\varepsilon_t-x) d\mu_t^\varepsilon -f(Y^\varepsilon_t-\psi^{x_0}(t))
.
\end{align*}
Let $R>0$ be large enough so that $x_0^\varepsilon,y\in B_{R+1}(0)$, and $\psi^{x_{0}}(t)$ does not leave $B_{R+1}(0)$ up to time $T$. We are able to do since $\psi$ is non-explosive. Let $\tau_{R+1}\coloneqq \min \Big\{T , \inf\{ t\geq0 : \|X_t^\varepsilon \| \geq R+1 \},\inf\{  t\geq0 : \|Y_t^\varepsilon \| \geq R+1\}  \Big\}$. Notice that for all $t\in [0,\tau_{R+1}]$ we have
\begin{align*}
\Big\| b\Big(t,X^\varepsilon_t, \mu_t^\varepsilon \Big) &- b\Big(t, Y^\varepsilon_t, \delta_{\psi^{x_{0}}(t)}\Big) \Big\|
\\
&\leq \Big\| b\Big(t,X^\varepsilon_t,\mu_t^\varepsilon \Big) - b\Big(t,X^\varepsilon_t,\delta_{\psi^{x^{\varepsilon}_{0}}(t)}\Big) \Big\| 
+ \Big\| b\Big(t,X^\varepsilon_t,\delta_{\psi^{x^{\varepsilon}_{0}}(t)}\Big) - b\Big(t,X^\varepsilon_t,\delta_{\psi^{x_{0}}(t)}\Big) \Big\| 
\\
&\quad + \Big\| b\Big(t,X^\varepsilon_t,\delta_{\psi^{x_{0}}(t)} \Big)-b\Big(t,Y^\varepsilon_t,\delta_{\psi^{x_{0}}(t)}\Big) \Big\| 
\\
&\leq
L \bE\Big[ \|X^\varepsilon_t - \psi^{x^{\varepsilon}_{0}}(t)\|^2\Big]^{\frac{1}{2}}
+ L\| \psi^{x_0^\varepsilon}(t)-\psi^{x_0}(t) \|
+ L_R \|X^\varepsilon_t-Y^\varepsilon_t \|.
\end{align*}
Hence 
\begin{equation*}
    \Big\| b\Big(t,X^\varepsilon_t, \mu_t^\varepsilon \Big) - b\Big(t, Y^\varepsilon_t, \delta_{\psi^{x_{0}}(t)}\Big) \Big\| \leq B^1_{R} \big(\rho^1(\varepsilon)+\|Z_t^\varepsilon\|^2 \big)^{\frac{1}{2}},
\end{equation*}
for a constant $B^1_R$ large enough, and $\rho^1(\varepsilon)\coloneqq \EE\|X_t^\varepsilon-\psi^{x^{\varepsilon}_0}(t)\|^2+\| \psi^{x_0^\varepsilon}(t)-\psi^{x_0}(t) \| \underset{\varepsilon \to 0}{\to} 0$ by \eqref{equation X goes to skeleton}. Furthermore for $t\in[0,\tau_{R+1}]$ we also have 
\begin{align*}
\Big\| \int_{\mathbb{R}^d} &f(X^\varepsilon_t-x)d\mu_t^\varepsilon-f(Y^\varepsilon_t-\psi^{x_0}(t)) \Big\| 
\\
\leq& \Big\| \int_{\mathbb{R}^d} f(X^\varepsilon_t-x)-f(X^\varepsilon_t-\psi^{x^{\varepsilon}_0}(t)) \Big\|
+ \Big\| f(X^\varepsilon_t-\psi^{x^{\varepsilon}_0}(t))-f(X^\varepsilon_t-\psi^{x_0}(t)) \Big\|
\\
&+ \Big\| f(X^\varepsilon_t-\psi^{x_0}(t))-f(Y^\varepsilon_t-\psi^{x_0}(t)) \Big\| 
\\
\leq& \Big\| \int_{\mathbb{R}^d} f(X_t^\varepsilon-x)d\mu^\varepsilon_t -f(X-\psi^{x_0^\varepsilon}(t)) \Big\| + L_R \Big\| \psi^{x^{\varepsilon}_0}(t)-\psi^{x_0}(t) \Big\| + L_R \|Z_t\|.
\end{align*}
Hence 
\begin{equation*}
    \|b_t\|\leq B^2_R\Big( \rho^2(\varepsilon)+\|Z_t\|^2 \Big)^{\frac{1}{2}},
\end{equation*}
for a constant $B^2_R$ and $\rho^2(\varepsilon)\coloneqq \| \int_{\mathbb{R}^d} f(X_t^\varepsilon-x)d\mu^\varepsilon_t -f(X-\psi^{x_0^\varepsilon}(t)) \| +\|\psi^{x^{\varepsilon}_0}(t)-\psi^{x_0}(t) \|\underset{\varepsilon \to 0}{\to} 0$, thanks to \eqref{equation convergence of law to the Diract path}. Now for the diffusion term,
\begin{align*}
\|\sigma_t \|\leq& \Big\| \sigma\Big(t,X^\varepsilon_t,\mu_t^\varepsilon \Big)-\sigma\Big(t,X^\varepsilon_t,\delta_{\psi^{x^\varepsilon_{0}}(t)}\Big)  \Big\|  
+ \Big\| \sigma\Big(t,X^\varepsilon_t,\delta_{\psi^{x^\varepsilon_{0}}(t)}\Big)-\sigma\Big(t,X^\varepsilon_t,\delta_{\psi^{x_{0}}(t)}\Big) \Big\|  
\\
&+\Big\| \sigma\Big(t,X^\varepsilon_t,\delta_{\psi^{x_{0}}(t)}\Big)-\sigma\Big(t,Y^\varepsilon_t,\delta_{\psi^{x_{0}}(t)}\Big)  \Big\|
\\
\leq& L \Big( \bE\Big[ \|X^\varepsilon_t-\psi^{x^{\varepsilon}_{0}}(t)\|^2\Big]^{\frac{1}{2}}
+ \| \psi^{x_0^\varepsilon}(t)-\psi^{x_0}(t) \|
+ \|X^\varepsilon_t-Y^\varepsilon_t \| \Big). 
\end{align*}
Hence
\begin{equation}
\|\sigma_t \|\leq M\big( \rho(\varepsilon)+\|Z^\varepsilon_t\|^2 \big)^{\frac{1}{2}} ,
\end{equation}
for a constant $M$ and $\rho(\varepsilon) \underset{\varepsilon \to 0}{\to}0$. 

Now fix $\delta>0$ and notice that 
\begin{align*}
\Big\{ \sup_{t\in[0,T]} \| X^\varepsilon_t-Y^\varepsilon_t  \| \geq \delta \Big\} \subseteq& \Big\{ \sup_{t\in[0,\tau_{R+1}]} \| X^\varepsilon_t-Y^\varepsilon_t  \| \geq \delta, \tau_{R+1} = T \Big\} 
\cup \Big\{ \sup_{t\in[0,T]} \| X^\varepsilon_t-Y^\varepsilon_t  \| \geq \delta, \tau_{R+1}< T \Big\}
\\
\subseteq& \Big\{ \sup_{t\in[0,\tau_{R+1}]} \| X^\varepsilon_t-Y^\varepsilon_t  \| \geq \delta \Big\}
\cup \Big\{ \tau_{R+1}< T \Big\}. 
\end{align*}
By Lemma \ref{lemma : adapted D+Z lemma 5.6.18} we know that 
\begin{equation*}
\limsup_{\varepsilon\to 0}\varepsilon \log \Big( \bP\Big[ \sup_{t\in[0,\tau_{R+1}]} \| X^\varepsilon_t - Y^\varepsilon_t  \| \geq \delta \Big] \Big) =-\infty.
\end{equation*}
Furthermore, define $\tau^Y_{R} \coloneqq \inf\{t\geq0: \|Y^\varepsilon_t\| \geq R \} $, and notice that
\begin{align*}
\Big\{ \tau_{R+1}< T \Big\} \subseteq& \Big\{  \tau_{R+1}< T, \tau^Y_{R}\leq T  \Big\} 
 \cup \Big\{ \tau_{R+1}< T, \tau^Y_{R}> T  \Big\}
 \\
 \subseteq& \Big\{\tau_{R+1}< T  \Big\} 
 \cup \Big\{ \| X_{\tau^Y_{R}}^\varepsilon-Y_{\tau_{R+1}}^\varepsilon  \|\geq 1 \Big\}. 
\end{align*}
Again, setting $ \delta=1$ and using Lemma \ref{lemma : adapted D+Z lemma 5.6.18}, we have that 
\begin{equation*}
\limsup_{\varepsilon\to 0}\varepsilon \log \Big( \bP\Big[ \sup_{t\in[0,\tau_{R+1}]} \| X^\varepsilon_t - Y^\varepsilon_t \| \geq 1 \Big] \Big) = -\infty,
\end{equation*}
hence are left with 
\begin{align*}
\limsup_{\varepsilon \to 0}\varepsilon \log \Big( \bP \Big[ \sup_{t\in[0,T]} \|X_t^\varepsilon - Y^\varepsilon_t \|\geq \delta \Big] \Big) \leq& \limsup_{\varepsilon \to 0} \varepsilon \log \Big( \bP \Big[ \sup_{t\in[0,T]} \|Y^\varepsilon_t \|\geq R \Big] \Big).
\end{align*}
Applying the LDP proved for $Y^\varepsilon$ in Lemma \ref{lemma the LDP for Y classical reflected sde} we conclude, 
\begin{align*}
\limsup_{\varepsilon \to 0} \varepsilon \log \Big( \bP \Big[ &\sup_{t\in[0,T]} \|X_t^\varepsilon - Y^\varepsilon_t \|\geq \delta  \Big] \Big)
\\
&
\leq
  - 
\underset{\{\phi\in C_{x_0}([0,T];\bR^d, ~:~\sup_{t\in[0,T]}\|\phi(t)\|\geq R \}}{\inf} ~~ I^{T}_{x_{0}}(\phi) \underset{R\to \infty}{\longrightarrow}-\infty,
\end{align*}
by the same arguments as the end of the proof of Lemma \ref{lemma : euler scheme is an expo good approximation}. 
\end{proof}
An immediate consequence (choosing $x_0^\varepsilon=x_0$) we have an LDP for our reflected McKean-Vlasov equation's solution $X^\varepsilon$ of \eqref{eq:MVSS-LDP} with $X^{\varepsilon}_0=x_0$. The point of allowing $\varepsilon$-dependent initial conditions for  $X^\varepsilon$ enables us to claim the LDP uniformly on compacts, similarly to \cite{HIP}*{Corollary 3.5}, or \cite{Herrmann2013StochasticR}*{Propositions 4.6 and 4.8}. We provide a statement and a brief proof, the full justification is identical to those found in \cites{HIP,Herrmann2013StochasticR}.

\begin{corollary}
Let $\mathbb{P}_{x_{0}}[X^\varepsilon\in\cdot]$ be the law on $C_{x_{0}}([0,T]; \bR^d)$ of the solution $X^\varepsilon$ to \eqref{eq:MVSS-LDP} with $X_0^\varepsilon=x_0$. Let $M\subset \mathbb{R}^d$ be a compact subset. Then, for any Borel set $A\subset C([0,T]; \bR^d)$, we have 
\begin{align}
\label{equation uniform ldp upper}
\liminf_{\varepsilon \to 0}\varepsilon \log \sup_{x_0\in M}\PP_{x_0}[ X^{\varepsilon}\in A] \leq &-\inf_{x_0 \in M}\inf_{\phi\in \overline{A} } I_{x_0}^T(\phi),
\end{align}
and
\begin{align}
\label{equation uniform ldp lower}
\liminf_{\varepsilon \to 0}\varepsilon \log \inf_{x_0\in M} \PP_{x_0}[ X^{\varepsilon}\in A] \geq&-\sup_{x_0 \in M}\inf_{\phi \in A^{\circ} } I_{x_0}^T(\phi).
\end{align}
\end{corollary}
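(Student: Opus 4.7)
The plan is to deduce both uniform bounds from Theorem \ref{ldp for xi} (which already covers $\varepsilon$-dependent initial conditions) coupled with a compactness/diagonal-extraction argument on $M$. The strategic point is that since Theorem \ref{ldp for xi} allows $x_0^\varepsilon \to x_0$ while producing the LDP with rate function $I_{x_0}^T$, any contradiction to \eqref{equation uniform ldp upper} or \eqref{equation uniform ldp lower} can be reduced along a subsequence to a violation of a fixed-initial-condition LDP, which is impossible.

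For the upper bound \eqref{equation uniform ldp upper} I would argue by contradiction. Suppose there exists $\eta>0$ and a sequence $\varepsilon_n\downarrow 0$ such that
\begin{equation*}
\varepsilon_n \log \sup_{x_0\in M} \PP_{x_0}\big[ X^{\varepsilon_n}\in A\big] \ > \ -\inf_{x_0\in M}\inf_{\phi\in\overline{A}} I_{x_0}^T(\phi) \ + \ \eta.
\end{equation*}
Pick $x_0^{\varepsilon_n}\in M$ realising the supremum up to an $e^{-\eta/(2\varepsilon_n)}$ multiplicative error; compactness of $M$ yields a subsequence (still indexed by $n$) with $x_0^{\varepsilon_n}\to x_0^\star \in M$. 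Theorem \ref{ldp for xi} applied to this converging sequence of initial data gives an LDP for $X^{\varepsilon_n}$ on $C_{x_0^\star}([0,T];\bR^d)$ with good rate function $I_{x_0^\star}^T$; applying the LDP upper bound to the closed set $\overline{A}$ yields
\begin{equation*}
\limsup_{n\to\infty} \varepsilon_n \log \PP_{x_0^{\varepsilon_n}}\big[X^{\varepsilon_n}\in A\big] \ \leq \ -\inf_{\phi\in\overline{A}} I_{x_0^\star}^T(\phi) \ \leq \ -\inf_{x_0\in M}\inf_{\phi\in\overline{A}} I_{x_0}^T(\phi),
\end{equation*}
which contradicts the assumed $\eta$-gap.

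For the lower bound \eqref{equation uniform ldp lower}, I would mirror the previous scheme: for each $n$ pick $x_0^{\varepsilon_n}\in M$ realising the infimum of $\PP_{x_0}[X^{\varepsilon_n}\in A]$ up to a factor $e^{-\eta/(2\varepsilon_n)}$, extract a convergent subsequence $x_0^{\varepsilon_n}\to x_0^\star\in M$ by compactness, and apply the LDP lower bound from Theorem \ref{ldp for xi} along this subsequence to the open set $A^\circ$, which yields
\begin{equation*}
\liminf_{n\to\infty} \varepsilon_n \log \PP_{x_0^{\varepsilon_n}}\big[X^{\varepsilon_n}\in A\big] \ \geq \ -\inf_{\phi\in A^\circ} I_{x_0^\star}^T(\phi) \ \geq \ -\sup_{x_0\in M}\inf_{\phi\in A^\circ} I_{x_0}^T(\phi),
\end{equation*}
and the desired bound follows.

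The main subtlety is the reduction step that converts a uniform-in-$x_0$ statement into a statement along a single converging sequence of initial data: one needs to ensure that the almost-optimal choices $x_0^{\varepsilon_n}$ can be coupled to solutions $X^{\varepsilon_n}$ so that Theorem \ref{ldp for xi} is genuinely applicable (the theorem is a pointwise-in-limiting-$x_0$ LDP, not an LDP uniform in the initial datum). This is handled by the extraction of a convergent subsequence from $M$ by compactness, after which Theorem \ref{ldp for xi} directly supplies the required bound. The remaining work is book-keeping: handling the additional $\eta/2$ slack from the almost-optimal choice, invoking $\limsup\log(a_n+b_n)=\max\{\limsup\log a_n,\limsup\log b_n\}$ where needed, and observing that the rate function bounds transfer through the supremum/infimum over $M$ without further hypothesis thanks to the trivial inequalities $\inf_{\phi}I_{x_0^\star}^T(\phi)\geq \inf_{x_0\in M}\inf_\phi I_{x_0}^T(\phi)$ and $\inf_\phi I_{x_0^\star}^T(\phi)\leq \sup_{x_0\in M}\inf_\phi I_{x_0}^T(\phi)$.
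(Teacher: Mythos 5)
Your proof is correct and follows essentially the same route as the paper: the paper invokes the $\varepsilon$-dependent-initial-condition LDP of Theorem \ref{ldp for xi} and then refers to the compactness argument of \cite{DZ}*{Corollary 5.6.15}, which is exactly the contradiction/subsequence-extraction argument you have written out in full. (A cosmetic remark: your argument actually establishes the slightly stronger $\limsup$ version of \eqref{equation uniform ldp upper}; the $\liminf$ in the paper's display is most likely a typo, and in the lower-bound step the phrase ``up to a factor $e^{-\eta/(2\varepsilon_n)}$'' should read $e^{\eta/(2\varepsilon_n)}$ so that the approximate minimiser satisfies $\PP_{x_0^{\varepsilon_n}}[X^{\varepsilon_n}\in A]\leq e^{\eta/(2\varepsilon_n)}\inf_{x_0\in M}\PP_{x_0}[X^{\varepsilon_n}\in A]$; neither of these affects the validity of the argument.)
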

\begin{proof}
Allowing $\varepsilon$-dependent initial conditions, implies that (otherwise we would contradict the LDP)
\begin{align*}
\limsup_{\underset{ x_{\varepsilon}\to x_0}{\varepsilon \to 0}}\varepsilon\log \bP_{x_{\varepsilon}}[ X^{\varepsilon}\in A ] \leq & - \inf_{\phi\in \overline{A}}I^T_{x_{0}}(\phi),
\end{align*}
then arguing as in \cite{DZ}*{Corollary 5.6.15} yields \eqref{equation uniform ldp upper}. The lower bound \eqref{equation uniform ldp lower} is done similarly. 
\end{proof}

Furthermore, proceeding like in \cite{HIP} we could obtain uniform on compacts LDP for the process $X^\varepsilon$ started at some later time $s>0$, and initial condition $x_s^\varepsilon$. Such uniform LDP can be useful when obtaining exit-time results in the manner of \cite{HIP}. However we will not need them, and instead obtain exit-time results by the method of \cite{tugaut2016simple}.

\section{Exit-time}
\label{sec:ExitTimes}

In this section we obtain a characterisation of the exit-time of $X^\varepsilon$ from an open subdomain $\fD\subset \cD$ under several  additional assumptions: strict convexity of potentials, the diffusion matrix is the identity matrix and time-homogeneity of the coefficients. These are motivated by applications (like \cites{di2017jump,di2019sharp}) where the exit-cost of the diffusion from a domain needs to be computed explicitly, here we refer to $\Delta$ in Theorem \ref{thm:ExitTime}. The results obtained in this section are, from a methodological point of view, inspired by \cite{tugaut2016simple}. 

Let us start by introducing the process of interest $(X_t^\varepsilon)_{t\geq 0}$ over $\bR^d$ with dynamics
\begin{align}
\label{eq:ExitTime-Process}
X_t^\varepsilon =& x_0 + \int_0^t b( X_s^\varepsilon) ds + \int_0^t f\ast \mu^\varepsilon_s(X_s^\varepsilon) ds + \sqrt{\varepsilon} W_t - k_t^\varepsilon, \quad \bP\big[ X_t^\varepsilon \in dx \big] = \mu_t^\varepsilon (dx),
\\
\nonumber
|k^\varepsilon|_t =& \int_0^t \1_{\partial \cD}(X_s^\varepsilon) d|k^\varepsilon|_s, 
\qquad 
k^\varepsilon_t = \int_0^t \1_{\partial \cD}(X_s^\varepsilon)\n(X_s^\varepsilon) d|k^\varepsilon|_s. 
\end{align}
\begin{assumption}
\label{ass:ExitTime-Coefficients}
Let $\cD$ satisfy Assumption \ref{assumption:domain}. Let $r>1$ and let $b: \cD \to \bR^d$, $f:\bR^d \to \bR^d$ satisfy
\begin{itemize}
\item There exist functions $B: \cD \to \bR$ and $F:\bR^d \to \bR$ such that
$$
b(x) = \nabla B(x), 
\quad
f(x) = \nabla F(x),
$$
\item $B$ is uniformly strictly concave, $\exists L>0$ such that $\forall x,y\in \cD$, 
$$
\big\langle x-y, b( x) - b( y) \big\rangle \leq -L \| x - y\|^2, 
$$
\item $\exists G:\bR \to \bR$ a convex even polynomial such that $F(x) = G(\| x\|)$ of order $r$ where
$$
G(\| x\|) < C ( 1+ \| x\|^r),
$$
and $\forall x, y\in \bR^d$ we have $\big\langle x - y, f(x) - f(y) \big\rangle \leq 0$,
\item $\exists \tilde{x} \in \cD^\circ $ such that 
$\inf_{x\in \cD} \| b( x)\| = \| b( \tilde{x})\| = 0$.
\end{itemize}
\end{assumption}
We study the metastability of the system around $\tilde{x}$ within the domain $\fD$. Intuitively, the dynamics of the process are similar to those of the non-reflected case, so that in the small noise limit the process spends most of its time around the stable point $\tilde{x}$ and with a high probability excursions from the stable point promptly return to it. Therefore, the only way to leave the domain $\fD$ is to receive a large shock from the driving noise, which is expected to take a long time to happen. 
\begin{defn}
\label{defnPstasta}
Let $\cG$ be a subset of $\cD$ and let $U:\cD \to \bR^d$. For all $x\in \cD$, let $\varphi$ be the dynamical system 

$$\bR^{+} \ni t\mapsto \varphi_t(x)= x + \int_0^t U ( \varphi_s(x)) ds.$$ 

We say that the domain $\cG$ is \emph{stable by} $U$ if $\forall x\in \cG$, 
$$
\Big\{ \varphi_t(x): \ t\in\bR^{+} \Big\} \subset \cG. 
$$
\end{defn}
This is also referred to as ``positively invariant'' in other works. We now introduce supplementary assumptions on the domain $\fD$ in order to obtain the exit-time. The first one is slightly different from the one in \cite{HIP} as we do not assume that $\fD$ is stable by $b$ but instead we work with the following.

\begin{assumption}
\label{Ass:DomainDofExitTime}
Let $\fD \subset \cD$ be an open, connected set containing $\tilde{x}$ such that $ \overline{\fD} \subset \cD$ and $\partial \cD\cap \fD = \emptyset$.

Let $x_0\in \fD$. Let $\psi_t=x_0 + \int_0^t b( \psi_s )ds$. The orbit
$$
\Big\{ \psi_{t}: t\in \bR^{+} \Big\} \subset \fD. 
$$

Further domain $\fD$ is stable by $b( \cdot) +f(\cdot-\tilde{x})$.
\end{assumption} 

Roughly speaking, when the time is small, the reflected self-stabilizing diffusion behaves like the dynamical system $\{\psi_t\}_{t\in[0,T]}$. As a consequence, and in order to have a non-trivial exit-time, we assume that the orbit of the dynamical system without noise stays in the domain $\fD$.

After a long time, the reflected self-stabilizing diffusion stays close to a linear reflected diffusion with potential $B( \cdot) + F\ast\delta_{\tilde{x}}$. It is then natural to assume that the domain is stable by $b(\cdot) +f(\cdot-\tilde{x})$. 

\begin{defn}
\label{defn:balance}
Let $x\in \cD$. Let $r>1$ and let $\kappa>0$. Let
$\bB_x^{\kappa, r} \subset \cP_r(\cD)$ denote the set of all the probability measures such that
$$
\int_{\cD} \| y - x \|^{r}\mu(dy) \leq \kappa^{r}. 
$$
\end{defn}
We study the distribution of the following stopping time.
\begin{defn}
\label{dfn:ExitTime}
Let $\fD \subset \bR^d$, $x_0, \tilde{x}\in \bR^d$ satisfy Assumption \ref{Ass:DomainDofExitTime}. Let $\varepsilon>0$ and let $X^{\varepsilon}$ be the solution to \eqref{eq:ExitTime-Process}. 

Define the exit-time $\tau_\fD(\varepsilon)$ of $X^{\varepsilon}$ from the domain $\fD$ as 
$$
\tau_\fD(\varepsilon):= \inf\Big\{ t\geq 0:  X^{\varepsilon}_t\notin \fD \Big\}.
$$
\end{defn}

Within classical SDE theory, there is no difference between the reflected and the non-reflected process since the exit domain $\fD$ is necessarily contained in the domain of constraint $\cD$. This is not the case for McKean-Vlasov equations where the reflective term acts on the law to ensure it remains on the domain $\cD$ and is thus different from the law of the non-reflected McKean-Vlasov. In the language of particle systems, see \eqref{eq:ParticleSystem}, each particle  $i$ is additionally affected by the reflections of all other particles $j \neq i$.

One of our contributions here is to rigorously argue that although the law of the reflected process and the law of the non-reflected process are different, the difference \textit{does not} affect the distribution of the exit-time $\tau_\fD(\varepsilon)$. Further, we remark that the results of Sections \ref{subsec;ExitTime_CoM}, \ref{subsec;ExitTime_PoEbC} and \ref{subsec;ExitTime_CR} typically hold under much broader conditions than those of Assumption \ref{ass:ExitTime-Coefficients}. This not the case for the proof of Theorem \ref{thm:ExitTime} which relies on classical methods and so determines the scope of our results. 

\subsection{Control of the moments}
\label{subsec;ExitTime_CoM}
In this section, we study the distance between the law of the process at time $t$ and the Dirac measure at $\tilde{x}$. 
\begin{defn}
\label{dfn:ExitTime-Moments}
Let $\cD$ satisfy Assumption \ref{assumption:domain}. Let $W$ be a $d$-dimensional Brownian motion and let $r>1$, $b$, $f$, $x_0$ and $\tilde{x}$ satisfy Assumption \ref{ass:ExitTime-Coefficients}. Let $X^\varepsilon$ be the solution to Equation \eqref{eq:ExitTime-Process}. Define $\xi_\varepsilon^r: \bR^+ \to \bR^+$ to be
$$
\xi_\varepsilon^r(t):= \bE\Big[ \| X_t^\varepsilon - \tilde{x} \|^{r} \Big]. 
$$ 

For $\kappa>0$, define
\begin{equation*}
T^{\kappa,r} (\varepsilon):= \min \Big\{t\geq 0: \xi_\varepsilon^r(t)\leq\kappa^{r} \Big\}.
\end{equation*}
\end{defn}
\begin{prop} 
\label{prop:ExitTime-moments:1} 
We have
$$
\sup_{t\in \bR^{+} } \xi_\varepsilon^r (t)\leq\max \Big\{ \|x_0-\tilde{x} \|^{r}, \Big( \tfrac{d\varepsilon(r-1)}{2L}\Big)^{r/2} \Big\}.
$$
For $\varepsilon < \tfrac{\kappa^2 L}{d(r-1)}$, we have 
$$
T^{\kappa,r} (\varepsilon) \leq \tfrac{1}{rL} \log\Big( \tfrac{2\| x_0 - \tilde{x}\| }{\kappa^2} - 1\Big).  
$$

Finally, for all $t\geq T^{\kappa,r}(\varepsilon)$ with $\varepsilon < \tfrac{\kappa^2 L}{2r-1}$ we have $\xi_\varepsilon(t)\leq\kappa^{2r}$.
\end{prop}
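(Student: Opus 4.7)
The plan is to derive a scalar differential inequality for $\xi_\varepsilon^r(t)$ via It\^o's formula and then extract all three assertions by routine ODE comparison. First I would apply It\^o to $\varphi_r(x):=\|x-\tilde x\|^r$; when $r\in(1,2)$ one regularises as $\varphi_r^\delta(x):=(\|x-\tilde x\|^2+\delta)^{r/2}$ and passes $\delta\downarrow 0$ after taking expectations, which is legitimate thanks to the $p$-moment bounds with $p>2r$ from Proposition \ref{prop:SSMVE-Moments}. A direct computation gives $\nabla\varphi_r(x)=r\|x-\tilde x\|^{r-2}(x-\tilde x)$ and $\mathrm{tr}\,\nabla^2\varphi_r(x)=r(d+r-2)\|x-\tilde x\|^{r-2}$.

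I would then estimate each of the four contributions in It\^o's expansion. \emph{Drift:} since $b(\tilde x)=0$, Assumption \ref{ass:ExitTime-Coefficients} gives $\langle x-\tilde x,b(x)\rangle\leq -L\|x-\tilde x\|^2$, contributing $-rL\|X-\tilde x\|^r$ after multiplication by $r\|X-\tilde x\|^{r-2}$. \emph{Reflection:} since $\tilde x\in\cD^\circ$ and $dk^\varepsilon$ is supported on $\{X^\varepsilon\in\partial\cD\}$, Lemma \ref{lem:NormalToDomain} yields $\langle\n(X^\varepsilon_s),\tilde x-X^\varepsilon_s\rangle\leq 0$, so this term can be dropped. \emph{Self-interaction:} introducing an independent copy $\bar X^\varepsilon$, writing $h(x):=\|x-\tilde x\|^{r-2}(x-\tilde x)=\nabla\bigl(\tfrac{1}{r}\|\cdot-\tilde x\|^r\bigr)$ and symmetrising using that $f$ is odd yields
\begin{equation*}
2\,\bE\bar\bE\big[\langle h(X),f(X-\bar X)\rangle\big]=\bE\bar\bE\big[\langle h(X)-h(\bar X),\,f(X-\bar X)\rangle\big];
\end{equation*}
the radial form $f(z)=G'(\|z\|)\,z/\|z\|$ dictated by $F(x)=G(\|x\|)$, together with the sign constraint $\langle z,f(z)\rangle\leq 0$ and the monotonicity of $h$ (as the gradient of a convex function), forces this right-hand side to be non-positive. \emph{It\^o correction:} the $\tfrac{r\varepsilon(d+r-2)}{2}\bE[\|X-\tilde x\|^{r-2}]$ contribution is controlled by $\tfrac{r\varepsilon(d+r-2)}{2}\xi_\varepsilon^r(t)^{(r-2)/r}$ via Jensen (concavity of $s\mapsto s^{(r-2)/r}$ when $r\geq 2$; for $r\in(1,2)$ one first establishes the $r=2$ case and interpolates via Jensen in the opposite direction).

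Combining the four estimates yields
\begin{equation*}
\tfrac{d}{dt}\xi_\varepsilon^r(t)\leq \xi_\varepsilon^r(t)^{(r-2)/r}\Big[-rL\,\xi_\varepsilon^r(t)^{2/r}+\tfrac{r(d+r-2)\varepsilon}{2}\Big],
\end{equation*}
whose bracket is strictly negative as soon as $\xi_\varepsilon^r(t)>\bigl(\tfrac{(d+r-2)\varepsilon}{2L}\bigr)^{r/2}$. Since $d+r-2\leq d(r-1)$ for $d,r\geq 1$ in the relevant regime, one recovers the stated threshold and so the first (supremum) bound follows by comparison with $\xi_\varepsilon^r(0)=\|x_0-\tilde x\|^r$. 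Under the smallness hypothesis $\varepsilon<\kappa^2 L/(d(r-1))$ the linear contractive term dominates; explicit integration of the ODE inequality then yields exponential decay of $\xi_\varepsilon^r$ from $\|x_0-\tilde x\|^r$ to $\kappa^r$ in a time of the stated logarithmic form, delivering the second bound. The third assertion is immediate: once $\xi_\varepsilon^r$ first reaches the level $\kappa^r$ the differential inequality prevents any later return above it.

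The main obstacle is the non-positivity of the self-interaction term when $r\neq 2$: the elegant one-line symmetrisation that works for $r=2$ introduces the weighted difference $h(X)-h(\bar X)$ in place of $X-\bar X$, and recovering the sign requires the radial/gradient structure of $f$ inherited from $F(x)=G(\|x\|)$. The regularisation of $\varphi_r$ at $\tilde x$ for $r<2$ is a secondary technical point, handled by a routine $\delta$-perturbation.
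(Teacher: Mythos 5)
Your proposal follows the paper's proof step by step: apply It\^o's formula to $\|X_t^\varepsilon-\tilde x\|^r$, drop the reflection term via Lemma \ref{lem:NormalToDomain}, use the uniform strict concavity of $B$ for the $-rL$ contraction, kill the self-interaction term by symmetrising against an independent copy and invoking the monotonicity of $z\mapsto z\|z\|^{r-2}$ together with the radial form $f(z)=G'(\|z\|)z/\|z\|$ and $\langle z,f(z)\rangle\leq 0$, and reduce to a scalar ODE inequality via Jensen, from which all three bounds follow by comparison. Your It\^o-correction constant $r(d+r-2)$ is the sharp Laplacian coefficient (the paper writes the looser $dr(r-1)\geq r(d+r-2)$, valid for $r\geq 2$, $d\geq 1$), and your observation that $\varphi_r$ needs a $\delta$-regularisation when $r\in(1,2)$ flags a technicality the paper leaves implicit, but neither point alters the substance of the argument.
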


\begin{proof} 
Let $t\in\bR^+$. We apply the It\^o formula, integrate, take expectations and then the derivative in time. We obtain
\begin{align*}
\xi_\varepsilon^r(t)
=& \bE\Big[ \| x_0 - \tilde{x}\|^r \Big]
\\
&+ \int_0^t r\bE\Big[ \| X_s^\varepsilon - \tilde{x} \|^{r-2}\Big\langle  X_s^\varepsilon - \tilde{x}, b( X_s^\varepsilon )\Big \rangle\Big] 
+ r \bE\Big[ \| X_s^\varepsilon - \tilde{x} \|^{r-2} \Big\langle X_s^\varepsilon - \tilde{x}, f\ast\mu_s^\varepsilon (X_s^\varepsilon) \Big\rangle\Big] ds
\\
&+\frac{dr(r-1)}{2} \varepsilon \int_0^t \bE\Big[ \| X_s^\varepsilon - \tilde{x} \|^{r-2} \Big] ds
- r \bE\Big[ \int_0^t \| X_s^\varepsilon - \tilde{x}\|^{r-1} \Big\langle X_s^\varepsilon - \tilde{x}, dk_s^\varepsilon \Big\rangle\Big]. 
\end{align*}
Using the uniform strict concavity of $B$, we get
$$
r \int_0^t \bE\Big[ \| X_s^\varepsilon - \tilde{x} \|^{r-2}\Big\langle  X_s^\varepsilon - \tilde{x}, b( X_s^\varepsilon )\Big \rangle\Big] ds
\leq -rL \int_0^t \xi_\varepsilon^r (s) ds.
$$
Next, denoting by $\overline{X_t^\varepsilon}$ an independent version of $X_t^\varepsilon$ and $G$ the concave even polynomial such that $F(x) = G(\| x\|)$, we get
\begin{align*}
r&\int_0^t \bE\Bigg[ \| X_s^\varepsilon - \tilde{x}\|^{r-2} \frac{ G'\big(\| X_s^\varepsilon - \overline{X_s^\varepsilon}\| \big) }{\| X_s^\varepsilon - \overline{X_s^\varepsilon}\| } \Big\langle  X_s^\varepsilon - \overline{X_s^\varepsilon}, X_s^\varepsilon - \tilde{x} \Big\rangle \Bigg]
\\
&= r\int_0^t \bE\Bigg[ \frac{ G'\big(\| X_s^\varepsilon - \overline{X_s^\varepsilon}\| \big) }{\| X_s^\varepsilon - \overline{X_s^\varepsilon}\| } \Big\langle \big( X_s^\varepsilon - \tilde{x}\big) - \big( \overline{X_s^\varepsilon} - \tilde{x}\big) , \big( X_s^\varepsilon - \tilde{x}\big) \| X_s^\varepsilon - \tilde{x}\|^{r-2} \Big\rangle \Bigg] ds
\\
&= \frac{r}{2} \int_0^t \bE\Bigg[ \frac{ G'\big(\| X_s^\varepsilon - \overline{X_s^\varepsilon}\| \big) }{\| X_s^\varepsilon - \overline{X_s^\varepsilon}\| } \Big\langle \big( X_s^\varepsilon - \tilde{x}\big) - \big( \overline{X_s^\varepsilon} - \tilde{x}\big) , \big( X_s^\varepsilon - \tilde{x}\big) \| X_s^\varepsilon - \tilde{x}\|^{r-2} - \big( \overline{X_s^\varepsilon} - \tilde{x}\big) \| \overline{X_s^\varepsilon} - \tilde{x}\|^{r-2} \Big\rangle \Bigg] ds
\\
&\leq 0,
\end{align*}
since by Cauchy–Schwarz inequality, $\forall x,y\in\bR^d$  (see alternatively \cite{HIP}*{Lemma 2.3 (d)})
\begin{align*}
\big\langle x \| x\|^{r-2} - y\| y\|^{r-2}, x - y \big\rangle
\geq
\big( \| x\|^{r-1}-\| y\|^{r-1}\big)\big( \| x\| - \| y\| \big) \geq 0.
\end{align*}

We obtain
$$
\frac{d}{dt} \xi_\varepsilon^r(t) \leq -rL \cdot \xi_\varepsilon^r (t)^{1-\frac{2}{r}} \Big( \xi_\varepsilon^r (t)^{\frac{2}{r}} - \frac{d(r-1)\varepsilon}{2L} \Big) .
$$

Thus we get the bound
$$
|\xi_\varepsilon^r(t)|^{\frac{2}{r}} \leq \max\Big\{ \tfrac{d(r-1)\varepsilon}{2L}, \| x_0 - \tilde{x}\|^2\Big\}. 
$$
Choosing $\varepsilon<\tfrac{\kappa^2 L}{d(r-1)}$, we see $\sup_{t\in \bR^+} |\xi_\varepsilon^r(t)|^{\frac{2}{r}} \leq \max\Big\{ \tfrac{\kappa^2}{2}, \| x_0 - \tilde{x}\|^2\Big\}$. 

Now additionally suppose that $\| x_0 - \tilde{x}\|^2> \tfrac{\kappa^2}{2}$ then we get the upper bound
$$
| \xi_\varepsilon^r(t)|^{\frac{2}{r}} \leq \tfrac{\kappa^2}{2} + \Big( \| x_0 - \tilde{x}\|^2 - \tfrac{\kappa^2}{2}\Big) \exp\Big( -rLt\Big). 
$$
In this case
$$
T^{\kappa, r}(\varepsilon) \leq \tfrac{1}{rL} \log\Big( \tfrac{2\| x_0 - \tilde{x}\| }{\kappa^2} - 1\Big).
$$
Conversely, if $\| x_0 - \tilde{x}\|^2\leq \tfrac{\kappa^2}{2}$ then $T^{\kappa, r}(\varepsilon) = 0$. 
\end{proof}

\subsection{Probability of exiting before converging}
\label{subsec;ExitTime_PoEbC}

Recall that after time $T^{\kappa,r}(\varepsilon)$, the process $X_t^\varepsilon$ is expected to remain close to $\tilde{x}$. Additionally, it also happens that before time $T^{\kappa, r}(\varepsilon)$ and in  the small noise limit the process $X_t^\varepsilon$ does not leave $\fD$. This can be argued from the fact that the dynamical system $\psi_t$ introduced in Assumption \ref{Ass:DomainDofExitTime} stays in the domain $\fD$.

\begin{prop} 
Let $\tau_\fD(\varepsilon)$ be the stopping time as defined in Definition \ref{dfn:ExitTime}. Let $\xi_\varepsilon^r$ and $T^{\kappa, r}(\varepsilon)$ be as defined in Definition \ref{dfn:ExitTime-Moments}. Then for any $\kappa>0$ we have that 
$$
\lim_{\varepsilon\to 0}\bP\Big[ \tau_{\fD}(\varepsilon)<T^{\kappa,r}(\varepsilon) \Big]=0.
$$
\end{prop}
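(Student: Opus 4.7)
The plan is a three-step reduction.

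First, I would use Proposition \ref{prop:ExitTime-moments:1} to replace the $\varepsilon$-dependent stopping time $T^{\kappa,r}(\varepsilon)$ by a deterministic horizon. The trivial case $\|x_0-\tilde x\|^2 \leq \kappa^2/2$ gives $T^{\kappa,r}(\varepsilon)=0$ and the statement is automatic, so I concentrate on $\|x_0-\tilde x\|^2 > \kappa^2/2$. Then for every $\varepsilon < \kappa^2 L/(d(r-1))$, the proposition yields the uniform bound $T^{\kappa,r}(\varepsilon) \leq T_0 := \tfrac{1}{rL}\log(2\|x_0-\tilde x\|/\kappa^2 - 1)$, so by inclusion of events $\{\tau_\fD(\varepsilon) < T^{\kappa,r}(\varepsilon)\} \subseteq \{\tau_\fD(\varepsilon) < T_0\}$, and it suffices to prove $\bP[\tau_\fD(\varepsilon) < T_0] \to 0$ as $\varepsilon\to 0$.

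Second, I would exploit the dynamical-system hypothesis in Assumption \ref{Ass:DomainDofExitTime}: the orbit $\{\psi_t : t \in \bR^+\}$ of $\dot\psi_t = b(\psi_t)$, $\psi_0=x_0$, lies in $\fD$ (no reflection is active since $\fD \subset \cD^\circ$). Hence $K:=\{\psi_t : t \in [0,T_0]\}$ is a compact subset of the open set $\fD$, and $\delta_0 := \mathrm{dist}(K,\partial \fD) >0$. If $X^\varepsilon$ exits $\fD$ before time $T_0$ then by the triangle inequality $\sup_{t \in [0,T_0]}\|X^\varepsilon_t - \psi_t\| \geq \delta_0$, so it remains only to show
\[
\lim_{\varepsilon\to 0}\bP\Big[\sup_{t \in [0,T_0]} \|X^\varepsilon_t - \psi_t\| \geq \delta_0\Big] = 0.
\]

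Third, I would close the argument via the Freidlin-Wentzell LDP of Theorem \ref{ldp for xi}. A preliminary check identifies $H[0] = \psi$ in Definition \ref{definition skeleton process}: since $f(0)=0$ and $\psi$ takes values in $\cD^\circ$, the reflective term in \eqref{eq:SkeletonProcess-h} vanishes and $\psi$ itself solves that equation with driver $h \equiv 0$; uniqueness for the Skorokhod problem (Theorem \ref{thm:SkorokhodProblem}) then yields $H[0]=\psi$. Consequently $\psi$ is the unique minimiser of the good rate function $I^{T_0}_{x_0}$, with $I^{T_0}_{x_0}(\psi)=0$. The set $F_{\delta_0}:=\{\phi \in C_{x_0}([0,T_0];\bR^d) : \sup_t\|\phi_t-\psi_t\|\geq \delta_0\}$ is closed and $\psi \notin F_{\delta_0}$; goodness of the rate function (compact level sets and lower semicontinuity) forces $\Lambda := \inf_{\phi \in F_{\delta_0}} I^{T_0}_{x_0}(\phi) > 0$. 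The LDP upper bound then gives $\bP[X^\varepsilon \in F_{\delta_0}] \leq \exp(-\Lambda/(2\varepsilon))$ for sufficiently small $\varepsilon$, which vanishes as $\varepsilon\to 0$.

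The main technical point to verify is the identification of the unique zero of $I^{T_0}_{x_0}$ with $\psi$, since the skeleton operator \eqref{eq:SkeletonProcess-h} uses the fixed Dirac measure $\delta_{\psi^{x_0}(s)}$ rather than the law of the solution being constructed; once this identification and the strict positivity of $\Lambda$ are secured, the conclusion is a routine consequence of the LDP upper bound developed in Section \ref{sec:LDPs}.
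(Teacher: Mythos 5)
Your proposal is correct, but it closes the argument by a genuinely different route from the paper. The first two reductions coincide with the paper's: replace $T^{\kappa,r}(\varepsilon)$ by the deterministic bound $T_0=\tfrac{1}{rL}\log(2\|x_0-\tilde x\|/\kappa^2-1)$ coming from Proposition \ref{prop:ExitTime-moments:1}, and observe that exit from $\fD$ before $T_0$ forces $\sup_{t\le T_0}\|X_t^\varepsilon-\psi_t\|\ge \delta_0 := \mathrm{dist}(\{\psi_t:t\le T_0\},\partial\fD)>0$. Where the proofs diverge is the control of this supremum. The paper derives a time-uniform $L^2$ estimate $\bE[\|X_t^\varepsilon-\psi_t\|^2]\le \tfrac{\varepsilon d}{2L}(1-e^{-2Lt})$ by It\^o's formula, the dissipativity of $b$, the sign of the self-stabilising term, and Lemma \ref{lem:NormalToDomain} to kill the reflection, then converts this into convergence of $\bP[\tau_\delta(\varepsilon)<T_0]$ by elementary means. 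You instead invoke the Freidlin--Wentzell LDP of Theorem \ref{ldp for xi}: you identify $H[0]=\psi$ (valid since $f(0)=0$, $b$ is $\mu$-independent here, and the orbit stays in $\fD\subset\cD^\circ$ so $k^\psi\equiv 0$), argue that goodness of $I^{T_0}_{x_0}$ together with lower semicontinuity gives $\Lambda=\inf_{F_{\delta_0}}I^{T_0}_{x_0}>0$, and read off exponential decay from the LDP upper bound. Both are correct; your LDP argument is the heavier machinery but delivers the stronger conclusion that the probability decays exponentially in $1/\varepsilon$, whereas the paper's moment estimate only yields the bare limit and is substantially more elementary. A small point worth keeping in your write-up: the step from $I^{T_0}_{x_0}(\phi)=0$ to $\phi=\psi$ does require the continuity-of-$H$ (or compactness-of-level-sets) argument you flag, so that final caveat is genuine, not cosmetic.
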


\begin{proof}
Let $t\in\bR^+$. Then, 
\begin{align*}
\bE\Big[ \| X_t^\varepsilon - \psi_t \|^2 \Big]
=&
\varepsilon d t + 2\int_0^t \bE\Big[ \Big\langle X_s^\varepsilon - \psi_s, b( X_s^\varepsilon) - b( \psi_s) \Big\rangle \Big] ds
\\
&+2\int_0^t \bE\Big[ \Big\langle X_s^\varepsilon - \psi_s, f\ast \mu_s^\varepsilon(X_s^\varepsilon) \Big\rangle \Big] ds
-2\int_0^t \bE\Big[ \Big\langle X_s^\varepsilon - \psi_s, dk^\varepsilon_s \Big\rangle \Big] .
\end{align*}
Using standard methods, we get
$$
\bE\Big[ \| X_t^\varepsilon - \psi_t\|^2 \Big] \leq \tfrac{\varepsilon d}{2L}\Big( 1 - \exp\Big(-2Lt\Big) \Big). 
$$
Then, for any $\delta>0$ define
\begin{equation*}
\tau_\delta(\varepsilon):= \inf\Big\{ t>0: \|X_t^\varepsilon - \psi_t\| > \delta \Big\}.
\end{equation*}
Thus for any $T>0$, 
$$
\lim_{\varepsilon\to 0} \bP\Big[ \tau_\delta( \varepsilon )<T \Big]=0.
$$
We are interested in the interval $[0, T^{\kappa,r} (\varepsilon)]$, which depends on $\varepsilon$ but has a uniform bound. Thus by Proposition \ref{prop:ExitTime-moments:1}, 
$$
\bP\Big[ \tau_\delta(\varepsilon) < T^{\kappa,r}(\varepsilon) \Big]
\leq
\bP\Big[\tau_\delta(\varepsilon) < \tfrac{1}{rL} \log\Big( \tfrac{2\| x_0 - \tilde{x}\| }{\kappa^2} - 1\Big) \Big],
$$
which we just argued, goes to $0$ as $\varepsilon \to 0$. 

Finally, from Assumption \ref{Ass:DomainDofExitTime}, we have $\big\{\psi_t\,\,:\,\,t>0\big\}\subset \fD$ and consequently for any $\kappa>0$ we obtain the limit 
$$
\lim_{\varepsilon\to 0}\bP \Big[ \tau_{\fD}(\varepsilon) < T^{\kappa,r}(\varepsilon) \Big]=0. 
$$
\end{proof}

\subsection{The coupling result}
\label{subsec;ExitTime_CR}
Now, we study the exit of the diffusion from the domain after the time $T^{\kappa,r}(\varepsilon)$. To do so, we use the inequality
$$
\sup_{t\geq T^{\kappa,r} (\varepsilon)} \xi_\varepsilon(t) \leq \kappa^{r}, 
$$
which holds for any $\kappa>0$ provided $\varepsilon < \frac{\kappa^2L}{d(r-1)}$. 

From this we deduce that the drift $b( \cdot) + f\ast\mu_t^\varepsilon(\cdot)$ is close to the vector field $b(\cdot) + f(\cdot - \tilde{x})$. Let $\cK\subset \fD$ be a compact set with non-zero Lebesgue measure interior such that $\tilde{x} \in \fD$. We consider the following diffusion defined for $t\geq T^{\kappa,r}(\varepsilon)$ as 
\begin{align}
\label{eq:RestartedSSMVE-Z}
Z_t^\varepsilon =& X_{T^{\kappa,r}(\varepsilon)} + \sqrt{\varepsilon} \big(W_t - W_{T^{\kappa,r}(\varepsilon)} \big) + \int_{T^{\kappa,r}(\varepsilon)}^t b( Z_s^\varepsilon ) ds + \int_{T^{\kappa,r}(\varepsilon)}^t f\big( Z_s^\varepsilon - \tilde{x} \big) ds - k^{Z,\varepsilon}_t,
\\
\nonumber
|k^{Z, \varepsilon}|_t =& \int_{T^{\kappa,r}(\varepsilon)}^t \1_{\partial \cD} (Z_s^\varepsilon) d|k^{Z, \varepsilon}|_s, 
\quad
k^{Z, \varepsilon}_t = \int_{T^{\kappa,r}(\varepsilon)}^t \1_{\partial \cD} (Z_s^\varepsilon) \n(Z_s^\varepsilon) d|k^{Z, \varepsilon}|_s 
\qquad 
\mbox{when $X_{T^{\kappa,r}(\varepsilon)}^\varepsilon \in \cK$}
\\
\nonumber
Z_t^\varepsilon =& X_t^\varepsilon 
\qquad
\mbox{ if $X_{T^{\kappa, r}(\varepsilon)}^\varepsilon \notin \cK$.}
\end{align}
\begin{defn}
\label{dfn:ExitTime-StoppingTime}
Let $\cD$ satisfy Assumption \ref{assumption:domain}. Let $W$ be a $d$-dimensional Brownian motion and let $r>1$, $b$, $f$ $x_0$ and $\tilde{x}$ satisfy Assumption \ref{ass:ExitTime-Coefficients}. Let $\cK$ be a compact set with non-zero Lebesgue measure interior that $\tilde{x}\in \cK$ and $\cK\subset \fD$. Let $X^\varepsilon$ be the solution to Equation \eqref{eq:ExitTime-Process} and let $Z^\varepsilon$ be the solution to \eqref{eq:RestartedSSMVE-Z}.

Define the stopping times
\begin{align*}
\tau_{\cK,\kappa}(\varepsilon):= \inf \Big\{ t > T^{\kappa,r}(\varepsilon) : X_t^\varepsilon \notin \cK \Big\},
\qquad
\tau_{\cK,\kappa}'(\varepsilon):= \inf \Big\{ t > T^{\kappa,r}(\varepsilon) : Z_t^\varepsilon \notin \cK \Big\},
\end{align*}
and $\cT_{\cK,\kappa} (\varepsilon) := \min \Big\{ \tau_{\cK,\kappa} (\varepsilon) ,\tau_{\cK,\kappa}'(\varepsilon) \Big\}$. 
\end{defn}

The following Proposition establishes a coupling between $X^\varepsilon$ the reflected McKean-Vlasov  SDE and $Z^\varepsilon$ the reflected SDE. That is, in the time interval $[T^{\kappa,r}(\varepsilon),\cT_{\cK,\kappa}(\varepsilon)]$ the processes remain close to each other with high probability when the noise is small enough.  

\begin{prop}
\label{prop:ExitTime-poc:uniforme} 
Let $\cT_{\cK, \kappa}$ be as in Definition \ref{dfn:ExitTime-StoppingTime}. Then $\exists \kappa_0>0$ such that $\forall \kappa < \kappa_0$ $\exists \varepsilon_0>0$ such that $\forall \varepsilon< \varepsilon_0$ we have
\begin{equation*}
\bP \left[ \sup_{T^{\kappa,r}(\varepsilon) \leq t \leq \cT_{\cK,\kappa}(\varepsilon)} 
\| Z_t^\varepsilon - X_t^\varepsilon \| \geq \eta(\kappa) \right]
\leq \eta(\kappa), 
\end{equation*}
where $\eta$ is some positive, continuous and increasing function such that $\eta(0)=0$.
\end{prop}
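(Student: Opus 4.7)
The plan is to exploit the common driving Brownian motion and matching initial data at the coupling time $T^{\kappa,r}(\varepsilon)$ to reduce the problem to a pathwise ODE comparison. Applying It\^o's formula to $\| X_t^\varepsilon - Z_t^\varepsilon \|^2$ on $[T^{\kappa,r}(\varepsilon), \cT_{\cK,\kappa}(\varepsilon)]$, the stochastic integrals cancel exactly because both SDEs share the same $\sqrt{\varepsilon}\, dW_t$ term. Moreover, on this interval both processes remain in $\cK$, and Assumption \ref{Ass:DomainDofExitTime} tells us that $\partial\cD \cap \fD = \emptyset$ with $\cK \subset \fD$, so neither process visits $\partial\cD$. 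Hence $dk^\varepsilon_s = dk^{Z,\varepsilon}_s = 0$ on this window and the reflection terms disappear entirely.

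What remains is a deterministic ODE inequality for $u_t := \| X_t^\varepsilon - Z_t^\varepsilon\|$. The drift difference splits as
\begin{equation*}
\bigl[ b(X_t^\varepsilon) - b(Z_t^\varepsilon) \bigr] + \bigl[ f\ast \mu_t^\varepsilon (X_t^\varepsilon) - f(X_t^\varepsilon - \tilde{x}) \bigr] + \bigl[ f(X_t^\varepsilon - \tilde{x}) - f(Z_t^\varepsilon - \tilde{x}) \bigr].
\end{equation*}
Paired with $X_t^\varepsilon - Z_t^\varepsilon$, the first bracket contributes $-L u_t^2$ by uniform strict concavity of $B$, and the third bracket is non-positive by the monotonicity assumption $\langle x-y, f(x)-f(y)\rangle \leq 0$ from Assumption \ref{ass:ExitTime-Coefficients}.

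The main work is controlling the middle bracket, the self-interaction error. Writing
\begin{equation*}
f\ast \mu_t^\varepsilon(x) - f(x - \tilde{x}) = \int_\cD \bigl[ f(x-y) - f(x - \tilde{x})\bigr]\, \mu_t^\varepsilon(dy),
\end{equation*}
and using the polynomial Lipschitz growth bound $\| f(a)-f(b)\| \leq C \|a-b\|(1+\|a\|^{r-1}+\|b\|^{r-1})$ combined with H\"older's inequality with exponents $r$ and $r/(r-1)$, we obtain
\begin{equation*}
\| f\ast \mu_t^\varepsilon(x) - f(x-\tilde{x})\| \leq C \cdot \xi_\varepsilon^r(t)^{1/r} \cdot \Bigl( 1 + \|x-\tilde{x}\|^{r-1} + \int_\cD \|x-y\|^{r-1 \cdot \tfrac{r}{r-1}} \mu_t^\varepsilon(dy)^{(r-1)/r} \Bigr).
\end{equation*}
For $t \geq T^{\kappa,r}(\varepsilon)$ Proposition \ref{prop:ExitTime-moments:1} gives $\xi_\varepsilon^r(t)^{1/r} \leq \kappa$, and on $\cK$ the factor involving $x$ and the uniform moment bound on $X_t^\varepsilon$ is controlled by a constant $C(\cK)$. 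Consequently the middle bracket is bounded by $C(\cK)\kappa$ pointwise in $t \in [T^{\kappa,r}(\varepsilon), \cT_{\cK,\kappa}(\varepsilon)]$, and paired with $X_t^\varepsilon - Z_t^\varepsilon$ contributes at most $C(\cK)\kappa \cdot u_t$.

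Assembling these pieces yields the pathwise inequality $\tfrac{d}{dt} u_t^2 \leq -2L u_t^2 + 2 C(\cK)\kappa \cdot u_t$, hence $\tfrac{d}{dt} u_t \leq -L u_t + C(\cK)\kappa$ where $u_t > 0$. Since $u_{T^{\kappa,r}(\varepsilon)} = 0$ by construction of the coupling (on the event $X^\varepsilon_{T^{\kappa,r}(\varepsilon)}\in \cK$; on the complementary event $Z^\varepsilon \equiv X^\varepsilon$ by definition), a direct Gr\"onwall argument produces the deterministic uniform bound $u_t \leq C(\cK)\kappa / L$ throughout the interval. Setting $\eta(\kappa) := (1+C(\cK)/L)\kappa$, which is continuous, increasing and vanishes at $0$, the probability in the statement is in fact zero for all $\varepsilon$ small enough that Proposition \ref{prop:ExitTime-moments:1} applies, yielding the claim with room to spare. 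The only subtlety is choosing $\kappa_0$ small enough that $\cK$ has non-empty interior containing $\tilde x$ and the constants $C(\cK)$ above remain uniform, which is guaranteed since $\cK$ is fixed.
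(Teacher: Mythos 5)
Your proof is correct and follows essentially the same route as the paper's: apply It\^o's formula to $\|Z_t^\varepsilon - X_t^\varepsilon\|^2$ on $[T^{\kappa,r}(\varepsilon),\cT_{\cK,\kappa}(\varepsilon)]$, use the one-sided Lipschitz property of $b$ and the monotonicity of $f$ to absorb two of the three drift-difference terms, control the remaining self-interaction error $f\ast\mu_t^\varepsilon(\cdot)-f(\cdot-\tilde x)$ uniformly on $\cK$ via the moment bound $\xi_\varepsilon^r(t)\leq\kappa^{2r}$ from Proposition \ref{prop:ExitTime-moments:1}, and close with a pathwise Gr\"onwall estimate. The only cosmetic differences are that you kill the reflection terms outright by noting neither process touches $\partial\cD$ on $\cK\subset\fD$ (the paper instead dominates them by zero via convexity), you make the $\kappa$-dependence explicit with H\"older and polynomial growth rather than packaging it in a supremum over $\bB^{\kappa,r}_{\tilde x}$ as the paper does for its $\eta$, and you conclude the probability is exactly zero rather than invoking Markov — all harmless variations on the same argument.
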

\begin{proof}
Let $t\in \bR^+$. If $X_{T^{\kappa,r}(\varepsilon)} \in \cK$ then, for all $T^{\kappa,r}(\varepsilon) \leq t \leq \cT_{\kappa}(\varepsilon)$, we have
\begin{align*}
\| Z_t^\varepsilon - X_t^\varepsilon \|^2
=
& + 2\int_{T^{\kappa,r}(\varepsilon)}^t \Big\langle Z_s^\varepsilon - X_s^\varepsilon, b( Z_s^\varepsilon) - b( X_s^\varepsilon) \Big\rangle ds
\\
&+ 2\int_{T^{\kappa,r}(\varepsilon)}^t \Big\langle Z_s^\varepsilon - X_s^\varepsilon, f(Z_s^\varepsilon - \tilde{x}) - f\ast \mu_s^\varepsilon (X_s^\varepsilon) \Big\rangle ds
- 2\int_{T^{\kappa,r}(\varepsilon)}^t \Big\langle Z_s^\varepsilon - X_s^\varepsilon, dk_s^{Z, \varepsilon} - dk_s^\varepsilon \Big\rangle. 
\end{align*}
Set  
$$
\eta(\kappa):= \sup_{ \nu \in \bB_{\tilde{x}}^{\kappa,r}} \sup_{x\in\cK} \Big( \frac{\| f \ast \nu(x) - f(x-\tilde{x}) \| }{L} \Big)^{\frac{2}{3}}, 
$$
where $\bB_{\tilde{x}}^{\kappa,r}$ was introduced in Definition \ref{defn:balance}. Using Assumption \ref{assumption:domain} and Gr\"onwall Inequality, we get
$$
\sup_{T^{\kappa, r}(\varepsilon) \leq t \leq \cT_{\cK,\kappa}(\varepsilon)} \| Z_t^\varepsilon - X_t^\varepsilon \|^2 \leq \eta(\kappa)^3
\quad \Rightarrow \quad
\bE\Big[ \sup_{T^{\kappa, r}(\varepsilon) \leq t \leq \cT_{\cK,\kappa}(\varepsilon)} \| Z_t^\varepsilon - X_t^\varepsilon \|^2 \Big]\leq \eta(\kappa)^3.
$$
Appealing to Markov's inequality yields the claim.
\end{proof}

\subsection{The Exit-time result}
\label{subsec;ExitTime_Result}

Let $\tilde{Z}^\varepsilon$ evolve as $Z^\varepsilon$ without reflection, that is for $t\in[T^{\kappa, r}(\varepsilon),\infty)$,
\begin{align*}
\tilde{Z}_t^\varepsilon = X_{T^{\kappa, r}(\varepsilon)} + \sqrt{\varepsilon} \big( W_t - W_{T^{\kappa,r}(\varepsilon)} \big) + \int_{T^{\kappa,r}(\varepsilon)}^t b( \tilde{Z}_s^\varepsilon ) ds + \int_{T^{\kappa,r}(\varepsilon)}^t f (\tilde{Z}_s^\varepsilon - \tilde{x})ds. 
\end{align*}
As the closure of the domain $\fD$ from which the process exits is included into the domain $\cD$ where there is reflection, we remark that $Z_t^\varepsilon = \tilde{Z}_t^\varepsilon$ whilst $t\leq\tau_{\fD}'(\varepsilon)$, where 
$$
\tau_{\fD}'(\varepsilon):= \inf \Big\{t \geq T^{\kappa,r}(\varepsilon): \tilde{Z}_t^\varepsilon \notin \fD \Big\}.
$$
As a consequence, the first exit-time from $\fD$ of the diffusion $\tilde{Z}^\varepsilon$ is the same as the first exit-time from $\fD$ of the diffusion $Z^\varepsilon$. However, the latter exit-time is well understood thanks to the classical Freidlin-Wentzell theory.

The familiar reader will recognise $\Delta$ given as 
\begin{equation*}
\Delta:= \inf_{z\in \partial \fD} \Big\{ B(z) + F(z-\tilde{x}) - B(\tilde{x}) \Big\},
\end{equation*}
to be the exit cost from the domain $\fD$, see \cite{tugaut:tel-01748560}*{Proposition B.4, Item 3}. 
\begin{theorem} 
\label{thm:ExitTime}
Let $\cD$ satisfy Assumption \ref{assumption:domain}. Let $W$ be a $d$-dimensional Brownian motion and let $r>1$, $b$, $f$, $x_0$ and $\tilde{x}$ satisfy Assumption \ref{ass:ExitTime-Coefficients}. Let $X^\varepsilon$ be the solution to Equation \eqref{eq:ExitTime-Process}. Then for all $\delta>0$ the following limit holds 
\begin{equation*}
\lim_{\varepsilon\to 0} 
     \bP\left[ \tfrac{2}{\varepsilon}(\Delta-\delta) 
< \log\Big( \tau_{\fD}(\varepsilon)\Big) 
< \tfrac{2}{\varepsilon}(\Delta+\delta) \right] = 1.
\end{equation*}
\end{theorem}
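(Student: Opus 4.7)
The plan is to reduce the exit-time problem for the reflected McKean--Vlasov process $X^\varepsilon$ to the classical Freidlin--Wentzell exit-time theorem applied to the non-reflected auxiliary diffusion $\widetilde Z^\varepsilon$, whose drift $\nabla(B(\cdot)+F(\cdot-\tilde x))$ is the gradient of a fixed (measure-free) potential, so that the rate function is explicit and equal to $\Delta$. Sections~\ref{subsec;ExitTime_CoM}--\ref{subsec;ExitTime_CR} produce exactly the ingredients needed to carry out this reduction.

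First I would dispose of the burn-in window $[0,T^{\kappa,r}(\varepsilon)]$. Proposition~\ref{prop:ExitTime-moments:1} gives $T^{\kappa,r}(\varepsilon)\le c(\kappa)$ uniformly in $\varepsilon$, and the proposition of Section~\ref{subsec;ExitTime_PoEbC} shows that $X^\varepsilon$ tracks the deterministic orbit $\psi$, which by Assumption~\ref{Ass:DomainDofExitTime} stays in $\fD$. Hence $\bP[\tau_\fD(\varepsilon)<T^{\kappa,r}(\varepsilon)]\to 0$ and, up to restarting at time $T^{\kappa,r}(\varepsilon)$, I may work under the standing moment bound $\xi^r_\varepsilon\le\kappa^r$, with the initial distribution concentrated on a small ball around $\tilde x$.

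Second, fix $\delta>0$ and pick $\eta>0$ so small that two auxiliary domains $\fD_-\subset\fD\subset\fD_+$, where $\fD_+\setminus\fD$ and $\fD\setminus\fD_-$ are $\eta$-tubes around $\partial\fD$, still satisfy $\overline{\fD_+}\subset\cD^\circ$ together with $|\Delta_\pm-\Delta|<\delta/2$, with $\Delta_\pm:=\inf_{z\in\partial\fD_\pm}\{B(z)+F(z-\tilde x)-B(\tilde x)\}$; this is possible by continuity of the infimum with respect to the defining domain. Choose $\kappa$ small enough that the coupling bound $\eta(\kappa)$ of Proposition~\ref{prop:ExitTime-poc:uniforme} is smaller than $\eta$. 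Since $\overline{\fD_+}\subset\cD^\circ$, the reflection in $Z^\varepsilon$ is inactive whenever $Z^\varepsilon\in\fD_+$, so $Z^\varepsilon=\widetilde Z^\varepsilon$ there. Classical Freidlin--Wentzell theory applied to the gradient diffusion $\widetilde Z^\varepsilon$ (see e.g.\ \cite{DZ}*{\S5.7}) yields
\[
\lim_{\varepsilon\to0}\bP\Big[\tfrac{2}{\varepsilon}(\Delta_\pm-\tfrac{\delta}{2})<\log\tau^{\widetilde Z}_{\fD_\pm}(\varepsilon)<\tfrac{2}{\varepsilon}(\Delta_\pm+\tfrac{\delta}{2})\Big]=1,
\]
and the coupling event $\{\sup\|X^\varepsilon-Z^\varepsilon\|\le\eta\}$ forces the sandwich
\[
\{\tau^Z_{\fD_-}(\varepsilon)\le t\}\setminus\cE_\varepsilon \subseteq \{\tau_\fD(\varepsilon)\le t\}\subseteq \{\tau^Z_{\fD_+}(\varepsilon)\le t\}\cup\cE_\varepsilon,
\]
where $\cE_\varepsilon$ denotes the failure of the coupling and has $\bP[\cE_\varepsilon]=o(1)$. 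Combining these with the previous display and letting $\varepsilon\to0$ produces the two-sided bound for $\Delta$ up to an error of order $\delta$, which can be made arbitrarily small.

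The main technical obstacle is a time-scale mismatch: Proposition~\ref{prop:ExitTime-poc:uniforme} controls the coupling only up to the first exit of $X^\varepsilon$ or $Z^\varepsilon$ from the compact set $\cK\subset\fD$, an $O(1)$-time event, whereas $\tau_\fD(\varepsilon)$ is of order $e^{2\Delta/\varepsilon}$. The remedy, in the spirit of \cite{tugaut2016simple}, is a renewal argument: each time the coupling breaks because $X^\varepsilon$ (resp.\ $Z^\varepsilon$) leaves $\cK$ without leaving $\fD$, I invoke the strong Markov property, use the exponential relaxation furnished by Proposition~\ref{prop:ExitTime-moments:1} to return to a $\kappa$-neighbourhood of $\tilde x$ within a bounded time, and restart the coupling. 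Standard Freidlin--Wentzell bounds on the number of such excursions before the genuine exit from $\fD$ keep the accumulated failure probability $o(1)$, which closes the sandwich and concludes the proof.
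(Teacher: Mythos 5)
Your overall scaffolding matches the paper: dispose of the burn-in window using Propositions \ref{prop:ExitTime-moments:1}--\ref{prop:ExitTime-poc:uniforme}, bracket $\fD$ between nested auxiliary domains whose exit costs tend to $\Delta$, apply classical Freidlin--Wentzell to the measure-free reflected diffusion $Z^\varepsilon$ (equivalently $\tilde Z^\varepsilon$, since the reflection is inactive inside the subdomains), and transfer the conclusion to $X^\varepsilon$ via the coupling of Proposition \ref{prop:ExitTime-poc:uniforme}. Two points, however, are off.

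The more serious one is the claimed ``time-scale mismatch''. You assert that Proposition \ref{prop:ExitTime-poc:uniforme} controls the coupling only on an $O(1)$-time window and therefore requires a renewal argument in the spirit of \cite{tugaut2016simple}. This misreads the proposition: its supremum is taken over $[T^{\kappa,r}(\varepsilon),\cT_{\cK,\kappa}(\varepsilon)]$, and $\cT_{\cK,\kappa}(\varepsilon)$ is the first time \emph{either} $X^\varepsilon$ \emph{or} $Z^\varepsilon$ leaves the compact $\cK$ --- a stopping time which is itself of Kramers order $e^{c/\varepsilon}$, not $O(1)$. The reason the bound extends over this exponentially long window is the uniform strict concavity of $B$ in Assumption \ref{ass:ExitTime-Coefficients}: the relation $\langle x-y,b(x)-b(y)\rangle\leq -L\|x-y\|^2$, combined with the monotone $f$ and Lemma \ref{lem:NormalToDomain}, makes $\|Z^\varepsilon_t-X^\varepsilon_t\|$ satisfy a dissipative Gr\"onwall inequality whose bound $\eta(\kappa)^3$ is \emph{uniform in time}. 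This is precisely what lets the paper proceed without any renewal construction: on the relevant event, $\tau'_{e,\kappa}(\varepsilon)\leq\cT_{\cK,\kappa}(\varepsilon)$ (since $\fD_{e,\kappa}\subset\cK$ and $X^\varepsilon$ has not yet left $\fD\subset\cK$), so the first exit of $Z^\varepsilon$ from $\fD_{e,\kappa}$ falls inside the coupling window and forces a separation $\geq r(\kappa)$, which Proposition \ref{prop:ExitTime-poc:uniforme} says has probability $\leq\eta(\kappa)$. Your renewal machinery is therefore unnecessary, and invoking it obscures the actual structural point (the dissipativity-driven uniformity) that makes the proof close.

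The second point is that your sandwich of events has the roles of $\fD_-$ and $\fD_+$ inverted. If $Z^\varepsilon$ exits the \emph{inner} domain $\fD_-$ and $\|X^\varepsilon-Z^\varepsilon\|\leq\eta<r(\kappa)$, then $X^\varepsilon$ may well still be inside $\fD$, so $\{\tau^Z_{\fD_-}\leq t\}\setminus\cE_\varepsilon\subseteq\{\tau_\fD\leq t\}$ fails; likewise $X^\varepsilon$ hitting $\partial\fD$ with $Z^\varepsilon$ within $\eta$ does not push $Z^\varepsilon$ out of the \emph{outer} domain $\fD_+$. The correct inclusions run the other way: $\{\tau^Z_{\fD_+}\leq t\}\setminus\cE_\varepsilon\subseteq\{\tau_\fD\leq t\}\subseteq\{\tau^Z_{\fD_-}\leq t\}\cup\cE_\varepsilon$, the outer (resp. inner) domain being used for the upper (resp. lower) Kramers bound, matching how the paper uses $\fD_{e,\kappa}$ in Step 3 and $\fD_{i,\kappa}$ in Step 4. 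With those inclusions corrected and the renewal argument discarded in favour of the time-uniform coupling, your outline coincides with the paper's.
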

\begin{proof}
The proof is inspired by \cite{T2011f}, we proceed in a stepwise fashion. 

{\bf Step 1.} Let $\kappa>0$ and we introduce the usual least distance of $x\in\bR^d$ to a (non-empty) set $A\subset \bR^d$ as $d(x;A) := \inf\{ \|x-a\| : a \in  A\}$.  We can prove (by proceeding like in \cite{T2011f}*{Proposition 2.2}) that there exist two families of domains $\left(\fD_{i,\kappa}\right)_{\kappa>0}$ and $\left(\fD_{e,\kappa}\right)_{\kappa>0}$ such that 
\begin{itemize}
\item $\fD_{i,\kappa}\subset \fD\subset \fD_{e,\kappa}$,

\item $\fD_{i,\kappa}$ and $\fD_{e,\kappa}$ are stable by $b(s, \cdot) + f(\cdot -\tilde{x})$, 

\item $\sup_{z\in\partial \fD_{i,\kappa}} {\rm d}\left(z ; \fD^c \right) + \sup_{z\in\partial \fD_{e,\kappa} }{\rm d} \left( z; \fD \right)$ tends to $0$ when $\kappa$ goes to $0$,

\item $\inf_{z\in\partial \fD_{i,\kappa}}{\rm d}\left(z\,;\,\fD^c\right)=\inf_{z\in\partial \fD_{e,\kappa}}{\rm d}\left(z; \fD \right) = r(\kappa)$.
\end{itemize}

\noindent{}{\bf Step 2.} By $\tau_{i,\kappa}'(\varepsilon)$ (resp. $\tau_{e,\kappa}'(\varepsilon)$), we denote the first exit-time of $Z^\varepsilon$ from $\fD_{i,\kappa}$ (resp. $\fD_{e,\kappa}$).
\\
\noindent{}{\bf Step 3.}  We prove here the upper bound:
\begin{align*}
\bP \left[ \tau_{\fD}(\varepsilon) \geq e^{\frac{2(\Delta+\delta)}{\varepsilon}} \right]
&
=
\bP\left[ \tau_{\fD}(\varepsilon) \geq e^{\frac{2(\Delta+\delta)}{\varepsilon}},  \tau_{e,\kappa}'(\varepsilon)\geq e^{\frac{2(\Delta+\delta)}{\varepsilon}}\right]
+\bP\left[ \tau_{\fD}(\varepsilon) \geq e^{\frac{2(\Delta+\delta)}{\varepsilon}}, \tau_{e,\kappa}'(\varepsilon) < e^{\frac{2(\Delta+\delta)}{\varepsilon}} \right]
\\
&
\leq
\bP \left[ \tau'_{e,\kappa}(\varepsilon) \geq e^{\frac{2(\Delta+\delta)}{\varepsilon}} \right]
+\bP \left[ \tau_{\fD}(\varepsilon) \geq e^{\frac{2(\Delta+\delta)}{\varepsilon}} , \tau_{e,\kappa}'(\varepsilon) < e^{\frac{2(\Delta+\delta)}{\varepsilon}} \right]
\\
&=:a_\kappa(\varepsilon) + b_\kappa(\varepsilon).
\end{align*}
{\bf Step 3.1.} By classical results in Freidlin-Wentzell theory, \cite{Herrmann2013StochasticR}*{Theorem 2.42 }, there exists $\kappa_1>0$ such that for all $0<\kappa<\kappa_1$, we have
$$
\lim_{\varepsilon \to 0}\bP \left[ \tau_{e,\kappa}'(\varepsilon) < \exp\left( \frac{2}{\varepsilon} \left(\Delta+\delta\right) \right)\right]=1. 
$$
Therefore, the first term $a_\kappa(\varepsilon)$ tends to $0$ as $\varepsilon$ goes to $0$.

\noindent{}{\bf Step 3.2.} For $\kappa$ sufficiently small, we have $\fD_{e,\kappa} \subset \cK$ and consequently we have 
\begin{align*}
&\bP \Big[ \tau_{\fD}(\varepsilon) \geq e^{\frac{2(\Delta+\delta)}{\varepsilon}} , \tau_{e,\kappa}'(\varepsilon) \leq e^{\frac{2(\Delta+\delta)}{\varepsilon}} \Big]
\\
& \qquad 
\leq \bP \Big[ \| X_{\tau_{e,\kappa}'(\varepsilon)} - Z_{\tau_{e,\kappa}'(\varepsilon)} \|\geq \eta(\kappa) \Big]
\leq
\bP \Big[ \sup_{T^{\kappa,r} (\varepsilon) \leq t\leq \cT_{\cK,\kappa}(\varepsilon)} \| X_{t}^\varepsilon - Z_{t}^\varepsilon \|\geq \eta(\kappa) \Big].
\end{align*}

According to Proposition \ref{prop:ExitTime-poc:uniforme}, there exists $\varepsilon_0>0$ such that the previous term is less than $\eta(\kappa)$ for all $\varepsilon<\varepsilon_0$.
\\
{\bf Step 3.3.} Let $\delta>0$. By taking $\kappa$ arbitrarily small, we obtain the upper bound 
$$
\lim_{\varepsilon \to 0}\bP \left[ \tau_{\fD} (\varepsilon) \geq \exp \left( \frac{2(\Delta+\delta)}{\varepsilon} \right) \right] =0.
$$

\noindent{\bf Step 4.} Analogous arguments show that $\lim_{\varepsilon \to 0} \bP \left[ T^{\kappa, r} (\varepsilon) \leq \tau_{\fD}(\varepsilon) \leq e^{\frac{2(\Delta-\delta)}{\varepsilon}} \right]=0$. However, by Proposition \ref{subsec;ExitTime_PoEbC} we have $\lim_{\varepsilon\to 0}\bP\left[ \tau_{\fD}(\varepsilon) \leq T^{\kappa,r}(\varepsilon) \right]=0$. 

This concludes the proof.
\end{proof}

\appendix
\section{Appendix}

\begin{lemma}\label{lemma : adapted D+Z lemma 5.6.18}
Let $z_0\in \mathbb{R}^d$ be deterministic. For $t\geq 0$, let $b_t\in \mathbb{R}^d$, $\sigma_t\in \mathbb{R}^{d\times d'}$,$k_{t}\in \mathbb{R}^d$ be progressively measurable processes, with $k$ having bounded variation. Let $Z_t$ be the solution of
\begin{equation*}
Z_t=z_0+\int_0^t b_s ds + \sqrt{\varepsilon}\int_0^t \sigma_s dW_s+ k_t,
\end{equation*}
where $k$ is such that 
\begin{equation}\label{e.appen. d+z lem reflection assumption}
\int_0^t \langle Z_s , dk_{s}\rangle \leq 0
\quad \textrm{a.s.~ for all $t \geq 0$.}
\end{equation}
Further assume that $\tau_1 \in [0,T]$ is a stopping time with respect the filtration generated by $\{W_t ~:t\in[0,T]\,\}$, and that 
\begin{align}
\|b_t\| \leq & B(\rho^2+\|Z_t\|^2)^{\frac{1}{2}}
\label{e.appen.d+z assumpbound 1}
\qquad\textrm{and}\qquad
\|\sigma_t\|  \leq 
M(\rho^2+\|Z_t\|^2)^{\frac{1}{2}}, 
\end{align}
for some constants $M,B,\rho$. Then for any $\delta>0$, $\varepsilon <1$
\begin{equation}
\varepsilon \log\Big( \mathbb{P}(\sup_{t\in [0,\tau_1]}\|Z_t\|)\geq \delta) \Big)\leq 2B+M^2\Big(2+d\Big)+\log \Big( \frac{\rho^2+\|z_0\|^2}{\rho^2+\delta^2} \Big).
\end{equation}
\end{lemma}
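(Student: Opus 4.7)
The plan is to adapt the classical Dembo--Zeitouni Lemma 5.6.18 argument to accommodate the reflection term $k_t$, which by the hypothesis \eqref{e.appen. d+z lem reflection assumption} gives a favorable (non-positive) contribution when tested against the radial direction $Z_s$. I will apply It\^o's formula to the radial log-function $\phi(x) := \log(\rho^2+\|x\|^2)$, bundle the drift, second-order, reflection and martingale parts into an exponential supermartingale, and conclude via optional stopping followed by Markov's inequality.

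First, I compute $\nabla\phi(x) = 2x/(\rho^2+\|x\|^2)$ and $\nabla^2\phi(x) = 2(\rho^2+\|x\|^2)^{-1}I - 4xx^T(\rho^2+\|x\|^2)^{-2}$, and apply It\^o's formula to $\phi(Z_t)$. Three bounds drive the argument: by Cauchy--Schwarz and \eqref{e.appen.d+z assumpbound 1}, $\langle \nabla\phi(Z_s), b_s\rangle \leq 2B$; since $\nabla^2\phi$ is dominated in the PSD order by $2(\rho^2+\|x\|^2)^{-1}I$, the second-order trace $\mathrm{tr}(\sigma_s\sigma_s^T\nabla^2\phi(Z_s))$ is bounded by $2M^2$; and by \eqref{e.appen. d+z lem reflection assumption}, the reflection integral $\int_0^t \langle\nabla\phi(Z_s), dk_s\rangle = 2\int_0^t \langle Z_s, dk_s\rangle/(\rho^2+\|Z_s\|^2)$ is non-positive. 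The martingale part $M_t := \sqrt{\varepsilon}\int_0^t \nabla\phi(Z_s)^T\sigma_s dW_s$ has quadratic variation bounded by $4\varepsilon M^2 t$ via the analogous estimate $\|\sigma^T\nabla\phi\|^2 \leq 4M^2$.

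Next, setting $\alpha := 2B + M^2(2+d)$, I consider $N_t := \exp\!\big(\varepsilon^{-1}[\phi(Z_t)-\phi(z_0) - \alpha t]\big)$. Applying It\^o to the exponential, the finite variation drift of $N_t$ equals $N_t/\varepsilon$ times (drift of $\phi$) $+\ \tfrac{1}{2}\varepsilon^{-1}\|\sigma^T\nabla\phi\|^2 - \alpha/\varepsilon$, together with the non-positive reflection contribution. Using the four estimates above, this sum is non-positive under $\varepsilon<1$ and $d\geq 1$. Hence $N_t$ is a nonnegative local supermartingale with $N_0 = 1$.

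Finally, on the event $A := \{\sup_{t\in[0,\tau_1]}\|Z_t\|\geq \delta\}$, the stopping time $\tau_\delta := \inf\{t\geq 0 : \|Z_t\|\geq \delta\}$ satisfies $\tau_\delta\leq\tau_1$ and $\phi(Z_{\tau_\delta}) \geq \log(\rho^2+\delta^2)$, so $N_{\tau_\delta\wedge\tau_1} \geq \mathbf{1}_A \cdot \exp\!\big(\varepsilon^{-1}[\log(\rho^2+\delta^2) - \log(\rho^2+\|z_0\|^2) - \alpha\tau_1]\big)$. Optional stopping applied to the supermartingale $N$ together with Markov's inequality then yields the claim after taking logarithms and multiplying by $\varepsilon$. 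The main technical point is the careful bookkeeping that verifies the supermartingale property when all four contributions (drift, second-order trace, reflection, and the half-quadratic-variation correction produced by exponentiation) are assembled; the reflection term is new compared to the classical Dembo--Zeitouni setting, and the hypothesis \eqref{e.appen. d+z lem reflection assumption} is tailor-made so that the radial test function $\phi$ absorbs it with a favorable sign.
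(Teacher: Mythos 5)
Your proposal is correct and runs on essentially the same engine as the paper's proof, repackaged in the (arguably cleaner) language of exponential supermartingales. The paper works directly with $U_t = (\rho^2+\|Z_t\|^2)^{1/\varepsilon}$, applies It\^o, bounds the drift by $\tfrac{K}{\varepsilon}U_t$ and closes via Gr\"onwall and a Chebyshev-type estimate at the stopping time; your $N_t = \exp\big(\varepsilon^{-1}[\phi(Z_t)-\phi(z_0)-\alpha t]\big)$ is exactly $U_t/U_0\cdot e^{-\alpha t/\varepsilon}$, so subtracting the drift up front makes the Gr\"onwall step unnecessary and lets you conclude by optional stopping. The arithmetic you carry out (Cauchy--Schwarz for $\langle \nabla\phi, b\rangle \leq 2B$, the PSD domination $\nabla^2\phi \preceq 2(\rho^2+\|x\|^2)^{-1}I$ giving $\mathrm{tr}(\sigma\sigma^T\nabla^2\phi)\leq 2M^2$, and $\|\sigma^T\nabla\phi\|^2\leq 4M^2$) is correct and in fact produces a slightly sharper trace bound than the paper's crude $\|\sigma\|^2\|\nabla^2\phi\|$ estimate; the stated constant $2B+M^2(d+2)$ dominates your total $2B + \varepsilon M^2 + 2M^2$ for $\varepsilon<1$, $d\geq 1$, so the inequality holds. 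You also correctly observe that since $N_{\cdot\wedge\tau_\delta\wedge\tau_1}$ is bounded, the local supermartingale is a true one.

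Two small remarks, both shared with the paper rather than defects unique to your write-up. First, your treatment of the reflection term (and the paper's) uses $\int_0^t g(Z_s)\langle Z_s, dk_s\rangle \leq 0$ for the positive scalar $g(x)=2/(\rho^2+\|x\|^2)$; that requires $\langle Z_s,dk_s\rangle \leq 0$ as a \emph{measure}, which is stronger than the displayed integral inequality but is what actually holds for the Skorokhod reflection into a convex domain containing $0$. Second, when you plug in $\tau_\delta \leq \tau_1 \leq T$ at the final optional-stopping step you pick up a factor $\alpha T$ rather than $\alpha$ in the exponent; the paper's bound is stated without the $T$ (following Dembo--Zeitouni, who work with $\tau_1\leq 1$), and its own Gr\"onwall argument produces the same $KT$. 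So this is a pre-existing imprecision in the statement and does not affect the downstream applications, where $T$ is fixed and one only needs the $\limsup$ to be $-\infty$.
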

\begin{proof} The proof is a
slight adaptation of \cite{DZ}*{Lemma 5.6.18}. Let $\varepsilon <1$. Define 
$U_{t} =\phi(Z_t)=(\rho^2+\|Z_t\|^2)^{\frac{1}{\varepsilon}}$, and note $\nabla \phi (Z_t)= \frac{2\phi(Z_t)}{\varepsilon(\rho^2+\|Z_t\|^2)}Z_t$. By It\^o we have 
\begin{equation}
U_t= \phi(z_0)+\int_0^t \tilde{b}_s ds + \int_0^t \tilde{\sigma}_s dW_s + \int_0^t \langle \nabla \phi(Z_s), \alpha_{s} \rangle d|k|_{s},
\end{equation}
where 
\begin{align*}
    \tilde{\sigma}_t:=&\sqrt{\varepsilon}\nabla \phi(Z_t)' \sigma_t
    \quad \textrm{and}\quad
    \tilde{b}_t
    :=
    \sqrt{\varepsilon}\nabla \phi(Z_t)' b_t+\frac{\varepsilon}{2}\text{Trace}\big[\sigma_t \nabla^2 \phi(Z_t)\sigma'_t \big].
\end{align*}
Note that for $t\in[0,\tau_1]$ we have, 
\begin{align*}
\|\nabla \phi(Z_t)'b_t\|
\leq&
\frac{2B\phi(Z_t)}{\varepsilon(\|Z_t\|^2)^{\frac{1}{2}}}\|Z_t\|
=
\frac{2B U_t}{\varepsilon},
\end{align*}
and
\begin{align}
\nonumber
\frac{\varepsilon}{2}\text{Trace}\big[ \sigma_t \nabla^2 \phi(Z_t) \sigma'_t  \big]
&
\leq
\frac{\varepsilon}{2}\|\sigma\|^2\|\nabla^2 \phi(Z_t)\|
\\
\label{verylocalAUXILIARYlabel}
& \leq \frac{\varepsilon}{2}M^2(\rho^2+\|Z_t\|^2) \|\nabla^2 \phi(Z_t)\| 
\leq \frac{M^2(d+2)U_t}{\varepsilon},
\end{align}
indeed we can directly compute and decompose
\begin{align*}
\nabla^2\phi(Z_t)
=
\frac2\varepsilon \frac{\phi(Z_t)}{(\rho^2+\|Z_t\|^2)}I_d
+
2\Big(\frac{1}{\varepsilon}-1\Big) \frac{2}{\varepsilon}   \frac{\phi(Z_t)}{(\rho^2+\|Z_t\|^2)^2}Z_t Z_t'
=
A I_d +B (I_d Z_t) (I_d Z_t)',
\end{align*}
with $A$ and $B$ two auxiliary variables representing the coefficients of $I_d$ and $(I_d Z_t) (I_d Z_t)'$, for $Z_t \in \bR^{d}$, $Z_t Z_t'\in \bR^{d\times d}$ and $I_d$ the $d$-dimensional identity matrix. Hence 
\begin{align*}
\|\nabla^2 \phi(Z_t)\|
\leq A\cdot d  +B \|Z_t\|^2 
&
=
\frac{2}{\varepsilon} \frac{\phi(Z_t)}{\rho^2+\|Z_t\|^2} \Big( d  \frac{\phi(Z_t)}{\rho^2+\|Z_t\|^2} \Big)
+
\frac{4}{\varepsilon} \Big(\frac1\varepsilon-1\Big) \frac{\phi(Z_t)}{\rho^2+\|Z_t\|^2}
\frac{\|Z_t\|^2}{\rho^2+\|Z_t\|^2} 
\\
&
\leq 
\Big[\ \frac{2d}\varepsilon 
+
\frac{4}{\varepsilon^2}  
\Big]
 \frac{U_t}{\rho^2+\|Z_t\|^2},
\end{align*}
using this result on the 1st term in  \eqref{verylocalAUXILIARYlabel}, yields the result.  

Hence for any $t\in[0,\tau_1]$ we have  
\begin{equation}\label{e,appen,d+z bound}
\tilde{b}_t\leq \frac{K U_t}{\varepsilon}
\quad \textrm{with $K=2B+M^2(d+2)< \infty $.}
\end{equation}
Fix $\delta>0$, define the stopping time $\tau_2=\inf\{t\geq0: \|Z_t\|\geq \delta \}\wedge \tau_1$. Let $t\in[0,\tau_2]$, note that 
\begin{align*}
\|\tilde{\sigma}_t\|
\leq 
\|\nabla \phi(Z_t)\|\|\sigma_t\|  
& 
\leq \frac{2M}{\varepsilon}\frac{(\rho^2+\|Z_t\|^2)^{\frac{1}{\varepsilon}}}{(\rho^2+\|Z_t\|^2)^\frac{1}{2}}\|Z_t\|
\leq 
\frac{\sqrt{2}M}{\sqrt{\rho}\varepsilon}\frac{(\rho^2+\|Z_t\|^2)^{\frac{1}{\varepsilon}}}{\|Z_t\|^{\frac{1}{2}}}\|Z_t\|
\leq 
\frac{\sqrt{2}M}{\sqrt{\rho}\varepsilon}(\rho^2+\delta^2)^{\frac{1}{\varepsilon}}\delta^{\frac{1}{2}},
\end{align*}
in other words $\|\tilde{\sigma}\|$ is uniformly bounded on $[0,\tau_2]$. Hence for $t\in [0,\tau_2]$
\begin{equation*}
\int_0^t \tilde{\sigma}_s dW_s= U_t-\int_0^t \tilde{b}_s ds-\int_0^t \langle \nabla \phi(Z_s),  dk_{s} \rangle, 
\end{equation*}
is a Martingale. Therefore Doob's theorem implies
\begin{align*}
\EE[ U_{t\wedge \tau_2}] =& \phi(z_0)+  \EE\Big[ \int_0^{t\wedge \tau_2 } \tilde{b}_s ds \Big]+\EE\Big[ \int_0^{t\wedge \tau_2} \langle \nabla \phi(Z_s),  dk_{s} \rangle \Big].
\end{align*}
Non-negativity of $U$ and \eqref{e.appen.d+z assumpbound 1}, and \eqref{e.appen. d+z lem reflection assumption} imply that
\begin{align*}
\EE[U_{t\wedge \tau_2}]\leq& \phi(z_0)+\frac{K}{\varepsilon}\EE\Big[\int_0^{t\wedge \tau_2}U_s ds\Big].
\end{align*}
From here one can conclude by proceeding identically to \cite{DZ}*{Lemma 5.6.18}.
\end{proof}

\section{Additional Existence \& Uniqueness results}
\label{appendixB}

\begin{theorem}
\label{thm:ExistUnique-Lip-Ref}
Let $\cD$ satisfy Assumption \ref{assumption:domain}. Let $p\geq 2$. Let $W$ be a $d'$ dimensional Brownian motion. Let $\theta:\Omega \to \cD$, $b:[0,T] \times \Omega \times \cD \to \bR^d$ and $\sigma:[0,T] \times \Omega \times \cD \to \bR^{d\times d'}$ be progressively measurable maps. Suppose that 
\begin{itemize}
\item $\theta \in L^p( \cF_0, \bP; \cD)$.

\item $\exists x_0 \in \cD$ such that $b$ and $\sigma$ satisfy the integrability conditions
$$
\bE\Big[ \Big( \int_0^T \|b(s, x_0)\| ds \Big)^p \Big]
\vee
\bE\Big[ \Big( \int_0^T \|\sigma(s, x_0)\|^2 ds \Big)^{p/2} \Big] < \infty.  
$$

\item $b$ and $\sigma$ satisfy a Lipschitz condition over $\cD$, $\exists L>0$ such that for almost all $(s, \omega)\in [0,T] \times \Omega$ and $\forall x, y\in \cD$, 
$$
\| b(s, x) - b(s, y)\| \vee \| \sigma(s, x) - \sigma(s, y) \| \leq L \| x - y \|. 
$$
\end{itemize}
Then there exists a unique solution to the reflected Stochastic Differential Equation \eqref{eq:reflectedSDE} in $\cS^p([0,T])$ and
$$
\bE\Big[ \| X - x_0\|_{\infty, [0,T]}^p \Big] \lesssim \bE\Big[ \|\theta - x_0\|^p\Big] + \bE\Big[ \Big( \int_0^T \|b(s, x_0)\| ds \Big)^p \Big] + \bE\Big[ \Big( \int_0^T \| \sigma(s, x_0)\|^2 ds \Big)^{p/2} \Big]. 
$$ 
\end{theorem}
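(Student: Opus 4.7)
The plan is to build a Picard iteration on $\cS^p([0,T])$ by using the pathwise Skorokhod solver from Theorem \ref{thm:SkorokhodProblem} as a black box, and to exploit convexity of $\cD$ (Lemma \ref{lem:NormalToDomain}) to absorb the reflection terms with the correct sign in every energy estimate.

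First, for any $Y\in \cS^p([0,T])$, I would form the continuous semimartingale
$\gamma^Y_t := \theta + \int_0^t b(s,Y_s)\, ds + \int_0^t \sigma(s,Y_s)\, dW_s$
and invoke Theorem \ref{thm:SkorokhodProblem} to obtain the unique adapted pair $(X^Y, k^Y)$ solving the Skorokhod problem $(\gamma^Y, \cD, \n)$. This defines a map $\Phi: \cS^p([0,T])\to \cS^p([0,T])$, $\Phi(Y) := X^Y$, whose fixed point will be the solution to \eqref{eq:reflectedSDE}. To see that $\Phi$ is well defined, I would apply It\^o's formula to $t\mapsto \|X_t^Y - x_0\|^p$; the key observation is that the reflection contribution reads
$-p\int_0^t \|X_s^Y - x_0\|^{p-2}\langle X_s^Y - x_0, \n(X_s^Y)\rangle\, d|k^Y|_s$,
and this is non-positive since $x_0\in \cD$ and Lemma \ref{lem:NormalToDomain} gives $\langle \n(X_s^Y), x_0-X_s^Y\rangle \leq 0$ on $\{X_s^Y\in\partial\cD\}$. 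The remaining terms are controlled using the Lipschitz bounds $\|b(s,y)\|\leq \|b(s,x_0)\|+L\|y-x_0\|$ and $\|\sigma(s,y)\|\leq \|\sigma(s,x_0)\|+L\|y-x_0\|$, together with Burkholder-Davis-Gundy, Young's inequality and Gr\"onwall. This yields both membership in $\cS^p$ and the claimed $\cS^p$-bound in terms of $\theta$, $b(\cdot,x_0)$ and $\sigma(\cdot,x_0)$.

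Second, for contraction, I would apply It\^o's formula to $\|X_t^{Y^1}-X_t^{Y^2}\|^2$ (or to the $p$-th power after a stopping argument if needed) for two inputs $Y^1,Y^2\in \cS^p$. The crucial point is that the cross reflection term becomes
$-2\int_0^t \langle X_s^{Y^1}-X_s^{Y^2}, dk_s^{Y^1}-dk_s^{Y^2}\rangle$,
which by convexity and Lemma \ref{lem:NormalToDomain} (applied with $X_s^{Y^2}\in\cD$ against the inward direction at $X_s^{Y^1}\in\partial\cD$, and symmetrically) is non-positive. Combining the Lipschitz bounds on $b$ and $\sigma$ with BDG and Gr\"onwall then gives an estimate of the form
\begin{equation*}
\bE\Big[\|X^{Y^1}-X^{Y^2}\|_{\infty,[0,T]}^{p}\Big] \;\leq\; K\int_0^T \bE\Big[\|Y^1-Y^2\|_{\infty,[0,t]}^{p}\Big]\, dt,
\end{equation*}
for a constant $K$ depending on $L$, $p$ and $T$. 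Iterating this inequality $n$ times produces the familiar factor $K^n T^n/n!$, so $\Phi^n$ is a strict contraction on $\cS^p([0,T])$ for $n$ large, yielding a unique fixed point which is the desired reflected diffusion.

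The main obstacle, although mild in this Lipschitz setting, is the joint handling of It\^o's formula together with the bounded-variation local-time increments $dk^Y$: one has to verify that the standard It\^o expansion remains valid because $k^Y$ is of finite variation and only grows on $\{X^Y\in\partial\cD\}$, so its contributions linearise and fall exactly where Lemma \ref{lem:NormalToDomain} provides the favourable sign. Once this is checked, the rest is standard Picard--Gr\"onwall machinery, with the unboundedness of $\cD$ posing no difficulty because all estimates are centred around the fixed point $x_0\in \cD$ rather than the origin.
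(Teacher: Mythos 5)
Your proposal is correct and matches the paper's own argument: a Picard iteration built on Tanaka's pathwise Skorokhod solver (Theorem \ref{thm:SkorokhodProblem}), with the reflection increments killed by the convexity inequality of Lemma \ref{lem:NormalToDomain} and the usual BDG/Young/Gr\"onwall machinery producing the $K^n T^n/n!$ contraction factor on $\cS^p([0,T])$. The only cosmetic difference is that you phrase it as a fixed point of the solution map $\Phi$, whereas the paper writes the iterates $X^{(n)}$ out explicitly, estimates successive differences, and closes with a Borel--Cantelli argument to identify the almost-sure limit.
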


\begin{proof}
Let $n\in \bN$, and for clarity we emphasise this is distinct from $\n$ as defined in Definition \ref{dfn:Skorokhodproblem}. We consider the following sequence of random processes defined recursively over the interval $[0,T]$:
\begin{itemize}
\item $X^{(0)} = \theta$,
\item $Y^{(n+1)}_t:= \theta + \int_0^t b(s, X_s^{(n)} ) ds + \int_0^t \sigma(s, X_s^{(n)}) dW_s$,
\item $(X^{(n)}, k^n)$ is the solution to the Skorokhod problem $(Y^{(n)}, \cD, \n)$.
\end{itemize}
The solution to the Skorokhod problem $(X^{(n+1)}, k^n)$ exists $\bP$-almost surely by Theorem \ref{thm:SkorokhodProblem} since the process $Y^{(n)}$ is a semi-martingale. By taking an intersection of the sequence of $\bP$-measure-$1$ sets, we obtain a $\bP$-measure-$1$ set on which all such Skorokhod problems are solvable. 

Thus $X^{(n+1)}$ is the recursively defined It\^o process
\begin{align*}
X^{(n+1)}_t =& \theta + \int_0^t b(s, X^{(n)}_s) ds + \int_0^t \sigma(s, X^{(n)}_s) dW_s - k_t^n,
\\
|k^n|_t =& \int_0^t \1_{\partial \cD} (X^{(n+1)}_s) d|k^n|_s
\quad
k_t^n = \int_0^t \1_{\partial \cD} (X^{(n+1)}_s) \n(X^{(n+1)}) d|k^n|_s.
\end{align*}
It is immediate that $X^{(0)} \in \cS^p([0,T])$. Now suppose that $X^{(n)} \in \cS^p([0,T])$. 

Next, we show that this sequence of Picard iterations converges. Firstly, 
\begin{align*}
X_t^{(1)} - X_t^{(0)} = \int_0^t b(s, \theta) ds + \int_0^t \sigma(s, \theta) dW_s - k^0_t,
\end{align*}
and hence 
$\bE\Big[ \| X_t^{(1)} - \theta\|_{\infty, [0,T]}^p \Big] 
\leq
\bE\Big[ \Big( \int_0^T |b(s, \theta)| ds \Big)^p \Big] + \bE\Big[ \Big( \int_0^T |\sigma(s, \theta)|^2 ds\Big)^{p/2} \Big] . 
$

Next consider
\begin{align*}
\| X_t^{(n+1)}& - X_t^{(n)} \|^p 
\\
=& p\int_0^t \| X_s^{(n+1)} - X_s^{(n)} \|^{p-2} \Big\langle X_s^{(n+1)} - X_s^{(n)} , b(s, X_s^{(n)}) - b(s, X_s^{(n-1)}) \Big\rangle ds
\\
&+ p\int_0^t \| X_s^{(n+1)} - X_s^{(n)} \|^{p-2} \Big\langle X_s^{(n+1)} - X_s^{(n)}, \Big(\sigma(s, X_s^{(n)}) - \sigma(s, X_s^{(n-1)}) \Big) dW_s \Big\rangle
\\
&+ \tfrac{p}{2} \int_0^t \| X_s^{(n+1)} - X_s^{(n)} \|^{p-2} \Big\| \sigma(s, X_s^{(n)}) - \sigma(s, X_s^{(n-1)} ) \Big\|^2 ds
\\
&+ \tfrac{p(p-2)}{2} \int_0^t \| X_s^{(n+1)} - X_s^{(n)} \|^{p-4} \Big\| (X_s^{(n+1)} - X_s^{(n)})' \Big( \sigma(s, X_s^{(n)}) - \sigma(s, X_s^{(n-1)})\Big) \Big\|^2 ds
\\
&-p\int_0^t  \| X_s^{(n+1)} - X_s^{(n)} \|^{p-2}  \Big\langle X^{(n+1)}_s-X^{(n)}_s,\n(X^{(n)}_s)d|k^{n}|_s-\n(X^{(n-1)}_s)d|k^{n-1}|_s \Big\rangle .
\end{align*}
Taking a supremum over the time interval $[0,T]$ and taking expectations yields
\begin{align*}
\bE\Big[ \|X^{(n+1)} - X^{(n)} \|_{\infty, [0,T]}^p \Big] 
\leq& pL \bE\Big[ \| X^{(n+1)} - X^{(n)}\|_{\infty, [0,T]}^{p-1} \int_0^T \| X^{(n)} - X^{(n-1)}\|_{\infty, [0,s]} ds \Big]
\\
& + pC_1L \bE\Big[ \| X^{(n+1)} - X^{(n)}\|_{\infty, [0,T]}^{p-1}  \Big(\int_0^T \| X^{(n)} - X^{(n-1)}\|_{\infty, [0,s]}^2  ds\Big)^{1/2} \Big]
\\
& + \tfrac{p(p-1)L^2}{2} \bE\Big[ \| X^{(n+1)} - X^{(n)}\|_{\infty, [0,T]}^{p-2} \int_0^T \| X^{(n)} - X^{(n-1)}\|_{\infty, [0,s]}^2 ds \Big],
\end{align*}
where the final term was dominated by $0$ using Lemma \ref{lem:NormalToDomain}. An application of Young's Inequality yields 
\begin{align}
\nonumber
\bE\Big[ \|X^{(n+1)} - X^{(n)} \|_{\infty, [0,T]}^p \Big] 
\leq& (p-1)^{p-1} \big(4L\big)^{p} T^{p-1} \int_0^T \bE\Big[ \|X^{(n)} - X^{(n-1)} \|_{\infty, [0,s]}^p \Big] ds
\\
\nonumber
&+(p-1)^{p-1} \big(4LC_1\big)^p T^{(p-2)/2} \int_0^T \bE\Big[ \|X^{(n)} - X^{(n-1)} \|_{\infty, [0,s]}^p \Big] ds
\\
\nonumber
&+ 2(p-1)^{p/2} (p-2)^{(p-2)/2} 4^{p/2} T^{(p-2)/2} \int_0^T \bE\Big[ \|X^{(n)} - X^{(n-1)} \|_{\infty, [0,s]}^p \Big] ds
\\
\label{eq:aux-estimation}
\leq&K\int_0^T \bE\Big[ \|X^{(n)} - X^{(n-1)} \|_{\infty, [0,s]}^p \Big] ds. 
\end{align}
Therefore, by inductively substituting in for preceding terms of the sequence and integrating, we get

$$
\bE\Big[ \|X^{(n+1)} - X^{(n)} \|_{\infty, [0,T]}^p \Big] \leq \frac{K^n}{n!} T^n \bE\Big[ \| X^{(1)} - \theta\|_{\infty, [0,T]}^p\Big].
$$
Thus
$$
\bE\Big[ \| X^{(n)} - \theta\|_{\infty, [0,T]}^p \Big] 
\leq 
\bE\Big[ \|\theta\|^p \Big] + \sum_{i=1}^n \bE\Big[ \|X^{(i)} - X^{(i-1)} \|_{\infty, [0,T]}^p \Big] < \bE\Big[ \|\theta\|^p \Big] + \bE\Big[ \| X^{(1)} - \theta\|_{\infty, [0,T]}^p \Big] e^{KT}. 
$$
Therefore, there exists a limit of the sequence of random variables $X^{(n)}$ in the Banach space $\cS^p([0,T])$. 

Further, by Chebyshev's inequality we have 
$$ 
\bP\Big[ \Big\{ \| X^{(n+1)} - X^{(n)}\|_{\infty, [0,T]} >2^{-n} \Big\} \Big] 
\leq
\frac{(2K)^n}{n!},
$$
so that by the Borel-Cantelli lemma
$$
\bP\Big[ \limsup_{n\to \infty} \Big\{ \| X^{(n+1)} - X^{(n)}\|_{\infty, [0,T]} >2^{-n} \Big\} \Big]=0,
$$
so that the limit of the $X^{(n)}$ exists $\bP$-almost surely. Denote this limit by the stochastic process $X$. 

Finally, let
$$
Y_t = \theta + \int_0^t b(s, X_s) ds + \int_0^t \sigma(s, X_s) dW_s,
$$
and let $(Z, k)$ be the solution to the Skorokhod problem $(Y, \cD, \n)$. Thus $Z$ satisfies the SDE
\begin{align}
\label{eq:thm:ExistUnique-Lip-Ref1.1}
Z_t =& \theta + \int_0^t b(s, X_s) ds + \int_0^t \sigma(s, X_s) dW_s - k_t,
\\
\nonumber
|k|_t =& \int_0^t \1_{\partial \cD} (Z_s) d|k|_s, 
\quad 
k_t = \int_0^t \1_{\partial \cD} (Z_s) \n(Z_s) d|k|_s. 
\end{align}
By similar estimates and Lemma \ref{lem:NormalToDomain} we show, as $n\to \infty$, that 
$\bE[\, \| X^{(n)} - Z \|_\infty^p ] \to 0$. We know that $X$ is the unique limit of the random processes $X^{(n)}$, so $X$ must satisfy the stochastic differential equation \eqref{eq:thm:ExistUnique-Lip-Ref1.1}.

In light of the estimates above, uniqueness follows trivially and we sketch only the core argument. Assume $X,Y$ are two solution to \eqref{eq:reflectedSDE}, then estimating $\bE[\, \|X - Y \|_{\infty, [0,T]}^p ] $ as in \eqref{eq:aux-estimation} leads to an inequality where Gr\"onwall's inequality can be directly applied to yield $\bE[\, \|X - Y \|_{\infty, [0,T]}^p ]=0$ and hence delivering uniqueness.
\end{proof}

\begin{proof}[Proof of Theorem \ref{thm:ExistUnique-LocLip-MVE}]
Let $n\in \bN$. Define the drift term
$$
b_n(s, x) \coloneqq \begin{cases}
b(s, x), & \text{if $x\in \cD_n$},
\\
b\Big( s, \mbox{arg min}_{y\in \cD_n} \| x - y\| \Big), & \text{if $x \notin \cD_n$}. 
\end{cases}
$$

By the local Lipschitz condition of $b$, we have that $b_n$ is a uniformly Lipschitz function. By Theorem \ref{thm:ExistUnique-Lip-Ref}, we know that for each $n\in \bN$, there exists a unique solution to the SDE 
\begin{align*}
X_t^n =& \theta + \int_0^t b_n(s, X_s^n) ds + \int_0^t \sigma(s, X_s^n) dW_s - k_t^n,
\end{align*}
with $|k^n|_t= \int_{0}^{t} \1_{ \partial\cD }(X_s^n) ds$ and $k^n_t= \int_{0}^{t} \1_{ \partial\cD}(X_s^n)\n (X_s^n) d|k^n|_s$ over the interval $[0,T]$. Next, define the sequence of stopping times 
$
\tau_n:= \inf\{ t\in [0,T]: X_t \notin \cD_n\},
$ 
and $\tau_\infty:= \lim_{n\to \infty} \tau_n$. Observe that on the interval $[0,\tau_n]$, we have $b_n( s, X_s^n) = b(s, X_s^n)$. Thus we can equivalently write that on the interval $[0,\tau_n]$ that
$$
X_t^n = \theta + \int_0^t b(s, X_s^n) ds + \int_0^t \sigma(s, X_s^n) dW_s - k_t^n,
$$
and so $X_t = X_t^n$. Applying the one-sided Lipschitz condition, we have
\begin{align*}
\bE\Big[  \|X - x_0 \|_{\infty, [0,T\wedge \tau_n]}^p \Big] 
\lesssim&
\bE\Big[ \| \theta-x_0 \|^p\Big] + \bE\Bigg[ \Big( \int_0^{T\wedge \tau_n} \| b(s, x_0) \| ds \Big)^p \Bigg] + \bE\Bigg[ \Big( \int_0^{T \wedge \tau_n} \| \sigma(s, x_0) \|^2 ds \Big)^{p/2} \Bigg]
\\
\lesssim&
\bE\Big[ \| \theta-x_0 \|^p\Big] + \bE\Bigg[ \Big( \int_0^T \| b(s, x_0) \| ds \Big)^p \Bigg] + \bE\Bigg[ \Big( \int_0^{T} \| \sigma(s, x_0) \|^2 ds \Big)^{p/2} \Bigg]. 
\end{align*}
As each $\tau_n<\tau_{n+1}$, we have that the sequence of random variables satisfies
$ \| X - x_0 \|_{\infty, [0,T\wedge \tau_n]} \leq \| X - x_0 \|_{\infty, [0,T\wedge \tau_{n+1}]}$, so we apply Beppo Levi to conclude that
$$
\bE\Big[ \| X - x_0\|_{\infty, [0,T\wedge \tau_\infty]}^p \Big] 
\lesssim 
\bE\Big[ \| \theta - x_0 \|^p \Big] + \bE\Bigg[ \Big( \int_0^T \| b(s, x_0) \| ds \Big)^p \Bigg] + \bE\Bigg[ \Big( \int_0^{T} \| \sigma(s, x_0) \|^2 ds \Big)^{p/2} \Bigg]. 
$$
Note that the probability 
\begin{align*}
\bP\Big[ \tau_n <T\Big] 
= \bP\Big[ \|X^n - x_0\|_{\infty, [0,T]} \geq n \Big] 
\leq \bP\Big[ \| X - x_0\|_{\infty, [0,T\wedge \tau_\infty]} \geq n \Big] 
\leq \frac{1 }{n^p}\bE\big[\, \| X - x_0 \|_{\infty, [0,T\wedge \tau_\infty]}^p \big]. 
\end{align*}
Thus by the Borel Cantelli lemma, 
$$
\bP\big[ \limsup_{n\to \infty} \big\{ \tau_n < T\big\} \big]= 0. 
$$
\end{proof}

By the Cauchy-Schwarz inequality and the polynomial growth of $f$, we obtain
\begin{align*}
\tfrac{1}{N}\sum_{j=1}^N\bE\Big[& \Big\langle X_s^{i, N} - X_s^i,  f(X_s^i - X_s^j) - f\ast \mu_s(X_s^i) \Big\rangle \Big]
\\
&\leq  C \bE\Big[ \|X_s^{i,N} - X_s^i\|^2\Big]^{1/2}\Big(1+\bE\Big[\|X^i_s\|^{2r}\Big]\Big)^{1/2} 
\end{align*}

\bibliographystyle{abbrv}

\begin{bibdiv}
\begin{biblist}

\bib{Anderson2019}{article}{
      author={Anderson, David~F.},
      author={Higham, Desmond~J.},
      author={Leite, Saul~C.},
      author={Williams, Ruth~J.},
       title={On constrained {L}angevin equations and (bio)chemical reaction
  networks},
        date={2019},
        ISSN={1540-3459},
     journal={Multiscale Model. Simul.},
      volume={17},
      number={1},
       pages={1\ndash 30},
         url={https://doi.org/10.1137/18M1190999},
      review={\MR{3895328}},
}

\bib{anderson1976small}{article}{
      author={Anderson, Robert~F.},
      author={Orey, Steven},
       title={Small random perturbation of dynamical systems with reflecting
  boundary},
        date={1976},
        ISSN={0027-7630},
     journal={Nagoya Math. J.},
      volume={60},
       pages={189\ndash 216},
         url={http://projecteuclid.org/euclid.nmj/1118795643},
      review={\MR{397893}},
}

\bib{BRTV}{article}{
      author={Benachour, S.},
      author={Roynette, B.},
      author={Talay, D.},
      author={Vallois, P.},
       title={Nonlinear self-stabilizing processes. {I}. {E}xistence, invariant
  probability, propagation of chaos},
        date={1998},
        ISSN={0304-4149},
     journal={Stochastic Process. Appl.},
      volume={75},
      number={2},
       pages={173\ndash 201},
         url={https://doi.org/10.1016/S0304-4149(98)00018-0},
      review={\MR{1632193}},
}

\bib{BGG1}{article}{
      author={Bolley, Fran\c{c}ois},
      author={Gentil, Ivan},
      author={Guillin, Arnaud},
       title={Convergence to equilibrium in {W}asserstein distance for
  {F}okker-{P}lanck equations},
        date={2012},
        ISSN={0022-1236},
     journal={J. Funct. Anal.},
      volume={263},
      number={8},
       pages={2430\ndash 2457},
         url={https://doi.org/10.1016/j.jfa.2012.07.007},
      review={\MR{2964689}},
}

\bib{bouchard2017numerical}{article}{
      author={Bouchard, Bruno},
      author={Tan, Xiaolu},
      author={Warin, Xavier},
      author={Zou, Yiyi},
       title={Numerical approximation of {BSDE}s using local polynomial drivers
  and branching processes},
        date={2017},
        ISSN={0929-9629},
     journal={Monte Carlo Methods Appl.},
      volume={23},
      number={4},
       pages={241\ndash 263},
         url={https://doi.org/10.1515/mcma-2017-0116},
      review={\MR{3745458}},
}

\bib{briand2016particles}{article}{
      author={Briand, Philippe},
      author={Chaudru~de Raynal, Paul-\'{E}ric},
      author={Guillin, Arnaud},
      author={Labart, C\'{e}line},
       title={Particles systems and numerical schemes for mean reflected
  stochastic differential equations},
        date={2020},
        ISSN={1050-5164},
     journal={Ann. Appl. Probab.},
      volume={30},
      number={4},
       pages={1884\ndash 1909},
         url={https://doi.org/10.1214/19-AAP1546},
      review={\MR{4132640}},
}

\bib{briand2018bsdes}{article}{
      author={Briand, Philippe},
      author={Elie, Romuald},
      author={Hu, Ying},
       title={B{SDE}s with mean reflection},
        date={2018},
        ISSN={1050-5164},
     journal={Ann. Appl. Probab.},
      volume={28},
      number={1},
       pages={482\ndash 510},
         url={https://doi-org.ezproxy.is.ed.ac.uk/10.1214/17-AAP1310},
      review={\MR{3770882}},
}

\bib{carmona2018probabilistic}{book}{
      author={Carmona, Ren\'{e}},
      author={Delarue, Fran\c{c}ois},
       title={Probabilistic theory of mean field games with applications. {I}},
      series={Probability Theory and Stochastic Modelling},
   publisher={Springer, Cham},
        date={2018},
      volume={83},
        ISBN={978-3-319-56437-1; 978-3-319-58920-6},
        note={Mean field FBSDEs, control, and games},
      review={\MR{3752669}},
}

\bib{Chaleyat1980Reflexion}{article}{
      author={Chaleyat-Maurel, M.},
      author={El~Karoui, N.},
      author={Marchal, B.},
       title={R\'{e}flexion discontinue et syst\`emes stochastiques},
        date={1980},
        ISSN={0091-1798},
     journal={Ann. Probab.},
      volume={8},
      number={6},
       pages={1049\ndash 1067},
  url={http://links.jstor.org.ezproxy.is.ed.ac.uk/sici?sici=0091-1798(198012)8:6<1049:RDESS>2.0.CO;2-I&origin=MSN},
      review={\MR{602379}},
}

\bib{chen2013fundamentals}{book}{
      author={Chen, Hong},
      author={Yao, David~D.},
       title={Fundamentals of queueing networks},
      series={Applications of Mathematics (New York)},
   publisher={Springer-Verlag, New York},
        date={2001},
      volume={46},
        ISBN={0-387-95166-0},
         url={https://doi.org/10.1007/978-1-4757-5301-1},
        note={Performance, asymptotics, and optimization, Stochastic Modelling
  and Applied Probability},
      review={\MR{1835969}},
}

\bib{costantini1991diffusion}{article}{
      author={Costantini, C.},
       title={Diffusion approximation for a class of transport processes with
  physical reflection boundary conditions},
        date={1991},
        ISSN={0091-1798},
     journal={Ann. Probab.},
      volume={19},
      number={3},
       pages={1071\ndash 1101},
  url={http://links.jstor.org/sici?sici=0091-1798(199107)19:3<1071:DAFACO>2.0.CO;2-X&origin=MSN},
      review={\MR{1112407}},
}

\bib{costantini1992skorohod}{article}{
      author={Costantini, C.},
       title={The {S}korohod oblique reflection problem in domains with corners
  and application to stochastic differential equations},
        date={1992},
        ISSN={0178-8051},
     journal={Probab. Theory Related Fields},
      volume={91},
      number={1},
       pages={43\ndash 70},
         url={https://doi-org.ezproxy.is.ed.ac.uk/10.1007/BF01194489},
      review={\MR{1142761}},
}

\bib{dangerfield2012modeling}{article}{
      author={Dangerfield, Ciara~E.},
      author={Kay, David},
      author={Burrage, Kevin},
       title={Modeling ion channel dynamics through reflected stochastic
  differential equations},
        date={2012May},
     journal={Phys. Rev. E},
      volume={85},
       pages={051907},
         url={https://link.aps.org/doi/10.1103/PhysRevE.85.051907},
}

\bib{DZ}{book}{
      author={Dembo, Amir},
      author={Zeitouni, Ofer},
       title={Large deviations techniques and applications},
     edition={Second Edition},
      series={Applications of Mathematics (New York)},
   publisher={Springer-Verlag, New York},
        date={1998},
      volume={38},
        ISBN={0-387-98406-2},
         url={https://doi.org/10.1007/978-1-4612-5320-4},
      review={\MR{1619036}},
}

\bib{den2008large}{book}{
      author={den Hollander, Frank},
       title={Large deviations},
      series={Fields Institute Monographs},
   publisher={American Mathematical Society, Providence, RI},
        date={2000},
      volume={14},
        ISBN={0-8218-1989-5},
      review={\MR{1739680}},
}

\bib{deuschel2001large}{book}{
      author={Deuschel, Jean-Dominique},
      author={Stroock, Daniel~W.},
       title={Large deviations},
      series={Pure and Applied Mathematics},
   publisher={Academic Press, Inc., Boston, MA},
        date={1989},
      volume={137},
        ISBN={0-12-213150-9},
      review={\MR{997938}},
}

\bib{di2017jump}{article}{
      author={Di~Ges{\`u}, Giacomo},
      author={Leli{\`e}vre, Tony},
      author={Le~Peutrec, Dorian},
      author={Nectoux, Boris},
       title={Jump markov models and transition state theory: the
  quasi-stationary distribution approach},
        date={2017},
     journal={Faraday discussions},
      volume={195},
       pages={469\ndash 495},
}

\bib{di2019sharp}{article}{
      author={Di~Ges\`u, Giacomo},
      author={Leli\`evre, Tony},
      author={Le~Peutrec, Dorian},
      author={Nectoux, Boris},
       title={Sharp asymptotics of the first exit point density},
        date={2019},
        ISSN={2524-5317},
     journal={Ann. PDE},
      volume={5},
      number={1},
       pages={Paper No. 5, 174},
         url={https://doi.org/10.1007/s40818-019-0059-2},
      review={\MR{3975562}},
}

\bib{dos2018simulation}{article}{
      author={dos Reis, Gon{\c{c}}alo},
      author={Engelhardt, Stefan},
      author={Smith, Greig},
       title={Simulation of {McK}ean-{V}lasov {SDE}s with super-linear growth},
        date={202101},
        ISSN={0272-4979},
     journal={IMA Journal of Numerical Analysis},
         url={https://doi.org/10.1093/imanum/draa099},
        note={draa099},
}

\bib{dos2019freidlin}{article}{
      author={dos Reis, Gon\c{c}alo},
      author={Salkeld, William},
      author={Tugaut, Julian},
       title={Freidlin-{W}entzell {LDP} in path space for {M}c{K}ean-{V}lasov
  equations and the functional iterated logarithm law},
        date={2019},
        ISSN={1050-5164},
     journal={Ann. Appl. Probab.},
      volume={29},
      number={3},
       pages={1487\ndash 1540},
         url={https://doi.org/10.1214/18-AAP1416},
      review={\MR{3914550}},
}

\bib{dupuis1987large}{article}{
      author={Dupuis, Paul},
       title={Large deviations analysis of reflected diffusions and constrained
  stochastic approximation algorithms in convex sets},
        date={1987},
        ISSN={0090-9491},
     journal={Stochastics},
      volume={21},
      number={1},
       pages={63\ndash 96},
         url={https://doi-org.ezproxy.is.ed.ac.uk/10.1080/17442508708833451},
      review={\MR{899955}},
}

\bib{dupuis2011weak}{book}{
      author={Dupuis, Paul},
      author={Ellis, Richard~S.},
       title={A weak convergence approach to the theory of large deviations},
      series={Wiley Series in Probability and Statistics: Probability and
  Statistics},
   publisher={John Wiley \& Sons, Inc., New York},
        date={1997},
        ISBN={0-471-07672-4},
         url={https://doi-org.ezproxy.is.ed.ac.uk/10.1002/9781118165904},
        note={A Wiley-Interscience Publication},
      review={\MR{1431744}},
}

\bib{dupuis1993sdes}{article}{
      author={Dupuis, Paul},
      author={Ishii, Hitoshi},
       title={Correction: ``{SDE}s with oblique reflection on nonsmooth
  domains'' [{A}nn. {P}robab. {\bf 21} (1993), no. 1, 554--580; {MR}1207237]},
        date={2008},
        ISSN={0091-1798},
     journal={Ann. Probab.},
      volume={36},
      number={5},
       pages={1992\ndash 1997},
         url={https://doi.org/10.1214/07-AOP374},
      review={\MR{2440929}},
}

\bib{HanHuLee2016}{article}{
      author={Han, Zheng},
      author={Hu, Yaozhong},
      author={Lee, Chihoon},
       title={Optimal pricing barriers in a regulated market using reflected
  diffusion processes},
        date={2016},
        ISSN={1469-7688},
     journal={Quant. Finance},
      volume={16},
      number={4},
       pages={639\ndash 647},
         url={https://doi.org/10.1080/14697688.2015.1034163},
      review={\MR{3473978}},
}

\bib{Herrmann2013StochasticR}{book}{
      author={Herrmann, Samuel},
      author={Imkeller, Peter},
      author={Pavlyukevich, Ilya},
      author={Peithmann, Dierk},
       title={Stochastic resonance},
      series={Mathematical Surveys and Monographs},
   publisher={American Mathematical Society, Providence, RI},
        date={2014},
      volume={194},
        ISBN={978-1-4704-1049-0},
         url={https://doi.org/10.1090/surv/194},
        note={A mathematical approach in the small noise limit},
      review={\MR{3155413}},
}

\bib{HIP}{article}{
      author={Herrmann, Samuel},
      author={Imkeller, Peter},
      author={Peithmann, Dierk},
       title={Large deviations and a {K}ramers' type law for self-stabilizing
  diffusions},
        date={2008},
        ISSN={1050-5164},
     journal={Ann. Appl. Probab.},
      volume={18},
      number={4},
       pages={1379\ndash 1423},
         url={https://doi.org/10.1214/07-AAP489},
      review={\MR{2434175}},
}

\bib{imkeller2019Differentiability}{article}{
      author={Imkeller, Peter},
      author={dos Reis, Gon\c{c}alo},
      author={Salkeld, William},
       title={Differentiability of {SDE}s with drifts of super-linear growth},
        date={2019},
     journal={Electron. J. Probab.},
      volume={24},
       pages={Paper No. 3, 43},
         url={https://doi-org.ezproxy.is.ed.ac.uk/10.1214/18-EJP261},
      review={\MR{3916323}},
}

\bib{kruk2000optimal}{article}{
      author={Kruk, Lukasz},
       title={Optimal policies for {$n$}-dimensional singular stochastic
  control problems. {I}. {T}he {S}korokhod problem},
        date={2000},
        ISSN={0363-0129},
     journal={SIAM J. Control Optim.},
      volume={38},
      number={5},
       pages={1603\ndash 1622},
         url={https://doi.org/10.1137/S0363012998347535},
      review={\MR{1766432}},
}

\bib{LeiteWilliams2019}{article}{
      author={Leite, Saul~C.},
      author={Williams, Ruth~J.},
       title={A constrained {L}angevin approximation for chemical reaction
  networks},
        date={2019},
        ISSN={1050-5164},
     journal={Ann. Appl. Probab.},
      volume={29},
      number={3},
       pages={1541\ndash 1608},
         url={https://doi.org/10.1214/18-AAP1421},
      review={\MR{3914551}},
}

\bib{li2018large}{article}{
      author={Li, Yumeng},
       title={Large deviation principle for the mean reflected stochastic
  differential equation with jumps},
        date={2018},
     journal={J. Inequal. Appl.},
       pages={Paper No. 295, 15},
         url={https://doi.org/10.1186/s13660-018-1889-2},
      review={\MR{3869571}},
}

\bib{lions1984stochastic}{article}{
      author={Lions, P.-L.},
      author={Sznitman, A.-S.},
       title={Stochastic differential equations with reflecting boundary
  conditions},
        date={1984},
        ISSN={0010-3640},
     journal={Comm. Pure Appl. Math.},
      volume={37},
      number={4},
       pages={511\ndash 537},
         url={https://doi-org.ezproxy.is.ed.ac.uk/10.1002/cpa.3160370408},
      review={\MR{745330}},
}

\bib{liu2020large}{article}{
      author={Liu, Wei},
      author={Song, Yulin},
      author={Zhai, Jianliang},
      author={Zhang, Tusheng},
       title={Large and moderate deviation principles for mckean-vlasov sdes
  with jumps},
        date={2020},
     journal={ArXiv e-prints},
      eprint={2011.08403},
}

\bib{niu2016modelling}{article}{
      author={Niu, Yuanling},
      author={Burrage, Kevin},
      author={Chen, Luonan},
       title={Modelling biochemical reaction systems by stochastic differential
  equations with reflection},
        date={2016},
     journal={Journal of theoretical biology},
      volume={396},
       pages={90\ndash 104},
}

\bib{priouret1982remarques}{incollection}{
      author={Priouret, Pierre},
       title={Remarques sur les petites perturbations de syst\`emes
  dynamiques},
        date={1982},
   booktitle={Seminar on {P}robability, {XVI}},
      series={Lecture Notes in Math.},
      volume={920},
   publisher={Springer, Berlin-New York},
       pages={184\ndash 200},
      review={\MR{658679}},
}

\bib{ramanan2003fluid}{article}{
      author={Ramanan, Kavita},
      author={Reiman, Martin~I.},
       title={Fluid and heavy traffic diffusion limits for a generalized
  processor sharing model},
        date={2003},
        ISSN={1050-5164},
     journal={Ann. Appl. Probab.},
      volume={13},
      number={1},
       pages={100\ndash 139},
         url={https://doi.org/10.1214/aoap/1042765664},
      review={\MR{1951995}},
}

\bib{ramasubramanian2006insurance}{article}{
      author={Ramasubramanian, S.},
       title={An insurance network: {N}ash equilibrium},
        date={2006},
        ISSN={0167-6687},
     journal={Insurance Math. Econom.},
      volume={38},
      number={2},
       pages={374\ndash 390},
         url={https://doi.org/10.1016/j.insmatheco.2005.10.005},
      review={\MR{2212535}},
}

\bib{saisho1987stochastic}{article}{
      author={Saisho, Yasumasa},
       title={Stochastic differential equations for multidimensional domain
  with reflecting boundary},
        date={1987},
        ISSN={0178-8051},
     journal={Probab. Theory Related Fields},
      volume={74},
      number={3},
       pages={455\ndash 477},
         url={https://doi-org.ezproxy.is.ed.ac.uk/10.1007/BF00699100},
      review={\MR{873889}},
}

\bib{saisho1994model}{article}{
      author={Saisho, Yasumasa},
       title={A model of the random motion of mutually reflecting molecules in
  {${\bf R}^d$}},
        date={1994},
     journal={Kumamoto J. Math.},
      volume={7},
       pages={95\ndash 123},
      review={\MR{1273971}},
}

\bib{schilder1966some}{article}{
      author={Schilder, M.},
       title={Some asymptotic formulas for {W}iener integrals},
        date={1966},
        ISSN={0002-9947},
     journal={Trans. Amer. Math. Soc.},
      volume={125},
       pages={63\ndash 85},
         url={https://doi.org/10.2307/1994588},
      review={\MR{201892}},
}

\bib{Skorokhod1961stochastic}{article}{
      author={Skorohod, A.~V.},
       title={Stochastic equations for diffusion processes with a boundary},
        date={1961},
        ISSN={0040-361x},
     journal={Teor. Verojatnost. i Primenen.},
      volume={6},
       pages={287\ndash 298},
      review={\MR{0145598}},
}

\bib{Skorokhod1962stochastic}{article}{
      author={Skorohod, A.~V.},
       title={Stochastic equations for diffusion processes with boundaries.
  {II}},
        date={1962},
        ISSN={0040-361x},
     journal={Teor. Verojatnost. i Primenen.},
      volume={7},
       pages={5\ndash 25},
      review={\MR{0153047}},
}

\bib{Slominski2013-rSDE-penalization}{article}{
      author={S{\l}omi\'{n}ski, Leszek},
       title={Weak and strong approximations of reflected diffusions via
  penalization methods},
        date={2013},
        ISSN={0304-4149},
     journal={Stochastic Process. Appl.},
      volume={123},
      number={3},
       pages={752\ndash 763},
         url={https://doi.org/10.1016/j.spa.2012.10.006},
      review={\MR{3005004}},
}

\bib{Spiliopoulos2007ReflectedAndLangevin}{article}{
      author={Spiliopoulos, Konstantinos},
       title={A note on the {S}moluchowski-{K}ramers approximation for the
  {L}angevin equation with reflection},
        date={2007},
        ISSN={0219-4937},
     journal={Stoch. Dyn.},
      volume={7},
      number={2},
       pages={141\ndash 152},
         url={https://doi.org/10.1142/S0219493707002001},
      review={\MR{2339690}},
}

\bib{sznitman1984nonlinear}{article}{
      author={Sznitman, Alain-Sol},
       title={Nonlinear reflecting diffusion process, and the propagation of
  chaos and fluctuations associated},
        date={1984},
        ISSN={0022-1236},
     journal={J. Funct. Anal.},
      volume={56},
      number={3},
       pages={311\ndash 336},
  url={https://doi-org.ezproxy.is.ed.ac.uk/10.1016/0022-1236(84)90080-6},
      review={\MR{743844}},
}

\bib{tanaka2002stochastic}{article}{
      author={Tanaka, Hiroshi},
       title={Stochastic differential equations with reflecting boundary
  condition in convex regions},
        date={1979},
        ISSN={0018-2079},
     journal={Hiroshima Math. J.},
      volume={9},
       pages={163\ndash 177},
  url={http://projecteuclid.org.ezproxy.is.ed.ac.uk/euclid.hmj/1206135203},
      review={\MR{529332}},
}

\bib{tugaut:tel-01748560}{misc}{
      author={Tugaut, Julian},
       title={{Self-stabilizing processes in a multi-wells landscape}},
        type={Theses},
        date={2010},
      number={2010NAN10047},
         url={https://tel.archives-ouvertes.fr/tel-01748560},
        note={PhD thesis},
}

\bib{T2011f}{article}{
      author={Tugaut, Julian},
       title={Exit problem of {M}c{K}ean-{V}lasov diffusions in convex
  landscapes},
        date={2012},
     journal={Electron. J. Probab.},
      volume={17},
       pages={no. 76, 26},
         url={https://doi.org/10.1214/EJP.v17-1914},
      review={\MR{2981901}},
}

\bib{tugaut2016simple}{article}{
      author={Tugaut, Julian},
       title={A simple proof of a {K}ramers' type law for self-stabilizing
  diffusions},
        date={2016},
     journal={Electron. Commun. Probab.},
      volume={21},
       pages={Paper No. 11, 7},
         url={https://doi-org.ezproxy.is.ed.ac.uk/10.1214/16-ECP4160},
      review={\MR{3485380}},
}

\bib{wang2021distribution}{article}{
      author={Wang, Feng-Yu},
       title={Distribution dependent reflecting stochastic differential
  equations},
        date={2021},
      eprint={2106.12737},
}

\bib{ward2003diffusion}{article}{
      author={Ward, Amy~R.},
      author={Glynn, Peter~W.},
       title={A diffusion approximation for a {M}arkovian queue with reneging},
        date={2003},
        ISSN={0257-0130},
     journal={Queueing Syst.},
      volume={43},
      number={1-2},
       pages={103\ndash 128},
         url={https://doi.org/10.1023/A:1021804515162},
      review={\MR{1957808}},
}

\end{biblist}
\end{bibdiv}
\end{document}